\pdfoutput=1
\documentclass[11pt, reqno]{amsart}
\usepackage{amsmath, amssymb, amsthm, verbatim,enumerate,bbm, mathtools,color,enumitem, tikz, graphicx, commath}
\usepackage{subcaption}
\usetikzlibrary{positioning,chains,fit,shapes,calc,decorations.markings}
\usepackage{dsfont}
\usepackage{hyperref}
\usepackage{lineno}
\usepackage{xcolor}
\usepackage{fullpage}

\def\V#1{\mathbf{#1}}

\def\vu{{\V u}}
\def\vv{{\V v}}

\def\vx{{\V x}}
\def\vy{{\V y}}

\def\ip#1{\left\langle #1\right\rangle}

\DeclareMathOperator{\diag}{diag}
\DeclareMathOperator{\rank}{rank}
\def \tran {\mathsf{T}}
\DeclareMathOperator{\SSubset}{Subset}

\renewcommand{\Pr}[2][]{\mathbb{P}_{#1} \left[ #2 \rule{0mm}{3mm}\right]}

\newtheorem{thm}{{Theorem}}[section]

\newtheorem{cor}[thm]{{Corollary}}
\newtheorem{prop}[thm]{{Proposition}}
\newtheorem{lem}[thm]{{Lemma}}
\newtheorem{defi}[thm]{{Definition}}

\newtheorem{claim}[thm]{{Claim}}
\newtheorem{conjecture}[thm]{Conjecture}
\newtheorem{obs}[thm]{Observation}

%% Repeat Theorems
\makeatletter
\newtheorem*{rep@theorem}{\rep@title}
\newcommand{\newreptheorem}[2]{%
\newenvironment{rep#1}[1]{%
 \def\rep@title{#2 \ref{##1}}%
 \begin{rep@theorem}}%
 {\end{rep@theorem}}}
\makeatother

\newreptheorem{thm}{Theorem}
\newreptheorem{lem}{Lemma}
\newreptheorem{claim}{Claim}

\newcommand{\var}{\varepsilon}
\newcommand{\R}{\mathbb{R}}
\newcommand{\E}{\mathbb{E}}

\title{Hamiltonicity of sparse pseudorandom graphs}
\author{Asaf Ferber}
\address{Department of Mathematics, University of California, Irvine.
Email:  {{asaff@uci.edu}}. }
\thanks{A.F. is partially supported by NSF grant DMS-1953799, NSF Career DMS-2146406, a Sloan's fellowship, and an Air force grant FA9550-23-1-0298. J.H.~is partially supported by Natural Science Foundation of China (12371341). R.V. is partially supported by NSF grant DMS-1954233, NSF grant DMS-2027299, U.S. Army grant 76649-CS, and NSF+Simons Research Collaborations on the Mathematical and Scientific Foundations of Deep Learning.} 
\author{Jie Han}
\address{School of Mathematics and Statistics and Center for Applied Mathematics, Beijing Institute of Technology, Beijing, China. Email: {{han.jie@bit.edu.cn}}} 
\author{Dingjia Mao}
\address{Department of Mathematics, University of California, Irvine.
Email:  {{dingjiam@uci.edu}}}
\author{Roman Vershynin}
\address{Department of Mathematics, University of California, Irvine.
Email:  {{rvershyn@uci.edu }}}
\date{\today}

\begin{document}
\maketitle

%\linenumbers
%%%%%%%%%%%%%%%%%%%%%%%%%%%%%%%%%%%%%%%%%%%%%%%%%%%%%%%%%%%%%%%%%%%%%%

\begin{abstract}
%Studying the class of   $(n,d,\lambda)$-graphs is one the classical topic in graph theory. 
We show that every $(n,d,\lambda)$-graph contains a Hamilton cycle for sufficiently large $n$, assuming that $d\geq \log^{6}n$ and $\lambda\leq  cd$, where $c=\frac{1}{70000}$. This significantly improves a recent result of Glock, Correia and Sudakov, who obtained a similar result for $d$ that grows polynomially with $n$. The proof is based on a new result regarding the second largest eigenvalue of the adjacency matrix of a subgraph induced by a random subset of vertices, combined with a recent result on connecting designated pairs of vertices by vertex-disjoint paths in $(n,d,\lambda)$-graphs. We believe that the former result is of independent interest and will have further applications. 
\end{abstract}

\section{Introduction}\label{s: intro}

A \emph{Hamilton cycle} in a graph is a cycle that passes through all the vertices of the
graph exactly once, and a graph  containing a Hamilton cycle is called  \emph{Hamiltonian}. Even though a Hamilton cycle is a relatively simple structure, determining whether a certain graph is Hamiltonian was included in the list of $21$ NP-hard problems by Karp \cite{karp1972reducibility}. Thus, there is significant interest in deriving conditions that ensure Hamiltonicity in a given graph. 
For instance, the celebrated Dirac's theorem~\cite{dirac1952some} states that every 
graph on $n \geq 3$ vertices with a minimum degree of $n/2$ is Hamiltonian. For more results on Hamiltonicity, readers can refer to %e.g., %\cite{ajtai1985first,ferber2023counting,ferber2016rainbow} and 
the surveys \cite{frieze2019hamilton,kuhn2012survey,kuhn2014hamilton}.

Most classical sufficient conditions for a graph to be Hamiltonian are only applicable to relatively dense graphs, such as those considered in Dirac's Theorem. Establishing sufficient conditions for Hamiltonicity in sparse graphs is known to be much more challenging. Sparse random graphs are natural objects to consider as starting points, and they have attracted a lot of attention in the past few decades. 
In 1976, P\'osa \cite{posa1976hamiltonian} proved that for some large constant $C$, the binomial random graph model $G(n, p)$ with
$p \geq C\log n/n$ is typically Hamiltonian. In the following few years, Korshunov \cite{korshunov1976solution} refined P\'osa’s result, and in 1983, Bollob\'as \cite{bollobas1984evolution}, and independently Koml\'os and Szemer\'edi  \cite{komlos1983limit} showed a more precise threshold for Hamiltonicity. Their results demonstrate 
that if $p = (\log n + \log \log n + \omega(1))/n$, then the probability of the random graph $G(n, p)$ being Hamiltonian tends
to 1 (we say such an event happens \emph{with high probability}, or \emph{whp} for brevity).

Following the fruitful study of random graphs, it is natural to explore families of deterministic graphs that behave in some ways like random graphs; these are sometimes called \emph{pseudorandom graphs}. A natural candidate to begin with is the following: suppose that we sample a random graph $G\sim G(n,p)$, and then allow an adversary to delete a constant fraction of the edges incident to each vertex. The resulting subgraph $H\subseteq G$ loses all its randomness. Thus, we cannot use, for example, a multiple exposure trick and concentration inequalities, which were heavily used in the proof of Hamiltonicity of a typical $G\sim G(n,p)$. Under such a model, one of the central problems to consider is quantifying the \emph{local resilience} of the random graph $G$ with respect to Hamiltonicity. In \cite{sudakov2008local}, Sudakov and Vu initiated the study of local resilience of random graphs, and they showed that for any $\var > 0$, if $p$ is somewhat greater than $\log ^4n/n$, then $G(n, p)$ typically has the property that every spanning subgraph
with a minimum degree of at least $(1 + \var)np/2$ contains a Hamilton cycle. They also conjectured that this remains true as long as $p=(\log n+\omega(1))/n$, which was solved by Lee and Sudakov \cite{lee2012dirac}. Later, an even stronger result, the so-called ``hitting-time” statement, was shown by Nenadov, Steger and Truji\'c \cite{nenadov2019resilience}, and Montgomery \cite{montgomery2019hamiltonicity}, independently.

Exploring the properties of pseudorandom graphs, which has attracted many researchers in the area, is much more challenging than studying random graphs. The first quantitative notion of pseudorandom graphs was introduced by Thomason \cite{thomason1987pseudo,thomason1987random}. He initiated the study of pseudorandom graphs by introducing the so-called \emph{$(p,\lambda)$-jumbled graphs}, 
which satisfy $|e(U)-p\binom{|U|}{2}|\leq \lambda|U|$ for every vertex subset 
$U\subseteq V$. Since then, there has been a great deal of investigation into different types and various properties  of pseudorandom graphs, for example, \cite{allen2014powers,conlon2014extremal,han2022factors,han2021finding,kohayakawa2007turan,nenadov2019triangle}. This remains a very active area of research in graph theory. 

One special class of pseudorandom graphs which has been studied extensively is the class of \emph{spectral expander graphs}, also known as \emph{$(n,d,\lambda)$-graphs}. Given a graph $G$ on vertex set $V=\{v_1,\ldots,v_n\}$, its \emph{adjacency matrix} $A:=A(G)$ is an $n\times n$, $0/1$ matrix, defined by $A_{ij}=1$ if and only if $v_iv_j\in E(G)$. Let $s_1(A)\geq s_2(A)\geq\cdots \geq s_n(A)$ be the \emph{singular values} of $A$ (see Definition \ref{def:singular-value}). Observe that for a $d$-regular graph $G$, we always have $s_1(G):=s_1(A(G))=d$, so the largest singular value is not a very interesting quantity. We say that $G$ is an \emph{$(n,d,\lambda)$-graph} if it is a $d$-regular graph on $n$ vertices with $s_2(G) \leq \lambda$. 

The celebrated Expander Mixing Lemma (see, e.g. Chapter 9 in \cite{alon2016probabilistic}) provides a
powerful formula to estimate the edge distribution of an $(n, d, \lambda)$-graph, which suggests that $(n,d,\lambda)$-graphs are indeed special cases of jumbled graphs, and that $G$ has stronger expansion properties for smaller values of $\lambda$. Thus, it is natural to seek for the best possible condition on the \emph{spectral gap} (defined as the ratio $\lambda/d$) which guarantees certain properties. Examples of such results can be found e.g. in \cite{alon2007embedding,balogh2010large,han2022spanning,pavez2023spanning}. For more on $(n,d,\lambda)$-graphs and their many applications, we refer the reader to the surveys of Hoory, Linial and Wigderson \cite{hoory2006expander}, Krivelevich and Sudakov \cite{krivelevich2006pseudo}, the book of Brouwer and Haemers \cite{brouwer2011spectra}, and the references therein.

Hamiltonicity of $(n,d,\lambda)$-graphs was first studied by Krivelevich and Sudakov \cite{krivelevich2003sparse}, who proved a sufficient condition on the spectral gap forcing Hamiltonicity. 
More precisely, they showed that for sufficiently large $n$, any $(n,d,\lambda)$-graph with
\[
\lambda/d\leq\frac{(\log\log n)^2}{1000\log n(\log\log\log n)}
\]
has a Hamilton cycle. In the same paper, Krivelevich and Sudakov made the following conjecture.

\begin{conjecture}\label{conj}
There exists an absolute constant $c > 0$ such that for any sufficiently large integer $n$, any $(n,d,\lambda)$-graph with $\lambda/d\leq c$ contains a Hamilton cycle.
\end{conjecture}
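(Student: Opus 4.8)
\emph{Proof strategy.} The plan is to prove Conjecture~\ref{conj} by the absorption method, using the Expander Mixing Lemma (EML) as the only source of pseudorandomness. First, I record the structural consequences of $\lambda\le cd$; since EML depends only on the ratio $\lambda/d$, these hold in every $(n,d,\lambda)$-graph with $c$ a small universal constant, \emph{uniformly in $d$}: (i) any two disjoint vertex sets of size at least $cn$ span an edge; (ii) every $S$ with $|S|\le n/4$ satisfies $|N(S)|\ge 2|S|$, and more precisely $|N(S)|\ge\min\{|S|/(16c^2),\ n/2\}$; (iii) between any two linear-sized sets $X,Y$ the number of edges is within $\lambda\sqrt{|X||Y|}$ of $d|X||Y|/n$, which in turn controls co-degrees and neighbourhood intersections. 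Property~(ii) is obtained by splitting $|S|$ into three ranges: for $|S|\ge 4c^2n$ one applies EML to $(S,V\setminus N[S])$; for the smallest range one combines $e(S,V\setminus S)=d|S|-2e(S)$, the EML bound on $e(S)$, and EML on $(S,N(S))$ to conclude $|N(S)|\ge |S|/(16c^2)$. So far nothing about the magnitude of $d$ has been used beyond $\lambda/d\le c$.

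\emph{Absorbing path.} Next I would build an \emph{absorbing path} $P_0$ on at most $\delta n$ vertices such that for every set $R$ with $|R|\le\delta^2 n$ there is a path on $V(P_0)\cup R$ with the same two endpoints as $P_0$. For each vertex $v$ one produces many vertex-disjoint \emph{absorber gadgets} --- short configurations admitting a spanning path between a fixed pair of endpoints both with and without $v$ --- and then extracts a robust subfamily by a random sparsification so that any small $R$ can be routed into it. A point to watch is that the naive ``triangle'' absorber, which replaces an edge $ab$ of the cycle by a path on $\{a,v,b\}$, requires $a,b\in N(v)$ with $ab\in E(G)$, and this can fail when $d=o(\sqrt n)$ because $N(v)$ may then be an independent set; so one must use genuinely path-based absorbers that route through a reserved reservoir, which exist because by~(ii)--(iii) any two vertices of $G$ are joined by a short path. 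The number of usable gadgets per vertex, hence the total absorption capacity, is exactly the quantity that deteriorates as $d$ shrinks, and keeping it positive down to the smallest values of $d$ compatible with $\lambda/d\le c$ --- which forces $d=\Omega(c^{-2})$, since the trace bound gives $\lambda\ge\sqrt{d-d^2/n}$ --- is one of the places where the sparse regime bites.

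\emph{Finishing, and the main obstacle.} With $P_0$ fixed, I would: (a) extend $P_0$, sharing one endpoint, to a path covering all but a set $R$ of at most $\delta^2 n$ vertices --- in an expander a longest path misses only a small set, which follows from~(ii) and a P\'osa-rotation argument on a longest path of $G$ that has the free end of $P_0$ as a fixed endpoint; (b) absorb $R$ into the $P_0$-portion using its absorbing property, obtaining a Hamilton path of $G$; (c) close that Hamilton path into a Hamilton cycle by rotating both endpoints to linear-sized sets (again by P\'osa plus~(ii)) and invoking~(i) to find a closing edge between them. The clean way to run~(a) and~(c) for large $d$ is to pass to a random induced subgraph, where one gains extra independence and, crucially, a bound on the second largest eigenvalue, so that it is still an expander; this is exactly where $d\ge\mathrm{polylog}\,n$ enters, because a random subset of vertices in a graph with $d=o(\log n)$ contains vertices with no neighbours in the subset, after which neither expansion nor the spectral bound survives. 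Removing the resampling --- i.e.\ carrying out the rotation, connecting, and gadget-finding arguments on $G$ itself at a spectral gap that is a fixed constant rather than $o(1/\log n)$ as in Krivelevich--Sudakov --- is the step I expect to be the main obstacle, and is precisely the difference between the full conjecture and its $d\ge\log^{10}n$ instance.
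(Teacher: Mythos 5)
There is a genuine gap, and in fact you name it yourself. The statement you were asked to prove is Conjecture~\ref{conj} (the Krivelevich--Sudakov conjecture), which the paper does \emph{not} prove: it is stated there as an open problem, and the paper only establishes the special case $d\ge\log^{10}n$ (Theorem~\ref{main-thm}, via Theorem~\ref{generalized-main-thm}). Your proposal, where it is concrete, reproduces exactly that restricted strategy: an absorbing path built from gadgets, a cover of the leftover vertices by few paths, and a connecting step carried out inside random vertex subsets whose induced subgraphs are shown to remain spectral expanders. That last ingredient (the analogue of Theorem~\ref{thm:subgraphs-of-pseudorandom-graphs} and the Connecting Lemma~\ref{lem:connecting-lemma}) is precisely where $\sigma d\gtrsim\log n$, hence $d\ge\mathrm{polylog}\,n$, is forced --- a random $k$-subset of a graph of degree $o(\log n)$ typically contains isolated vertices, so neither near-regularity nor the second-singular-value bound survives. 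Your closing sentence concedes that removing this resampling step ``is the step I expect to be the main obstacle,'' i.e.\ the one step that separates the conjecture from the theorem is left unproved. A strategy outline that defers exactly that step is not a proof of the conjecture; at best it is a re-derivation of the paper's plan for the $d\ge\log^{10}n$ case, and even there the details (robust sparsification of gadgets, compatibility of P\'osa rotations with a protected absorbing path, the quantitative expansion needed so a longest path misses only $\delta^2 n$ vertices) are asserted rather than carried out.

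Two smaller points. First, your step (a)+(c) as written relies on rotation--extension alone to finish after absorption, but with a constant spectral ratio the classical rotation argument gives long paths and booster edges, not Hamiltonicity per se; the known proofs (Krivelevich--Sudakov, and Glock--Correia--Sudakov) need either a sub-constant gap $\lambda/d=o(1/\log n)$ or polynomial degree to iterate boosters or to run absorption, so the constant-gap, all-$d$ regime is not a routine combination of (i)--(iii). Second, your observation that the trace bound forces $d=\Omega(c^{-2})$ is correct but does not help: the problematic regime is $c^{-2}\ll d\ll\log n$, where none of the tools you invoke (random induced subgraphs with inherited spectral gap, Chernoff-type concentration of degrees into small random sets) are available, and your proposal offers no substitute for them.
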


Although there are numerous related results in this direction, there had been no improvement on the original bound until the recent result given by Glock, Correia and Sudakov \cite{glock2024hamilton}. In their paper, they improved the above result in two different ways: $(i)$ they demonstrated that the spectral gap $\lambda/d\leq c/(\log n)^{1/3}$ already guarantees Hamiltonicity; $(ii)$ they confirmed Conjecture \ref{conj} in the case where $d\geq n^{\alpha}$ for every fixed constant $\alpha>0$.

In this paper, we improve the second result in \cite{glock2024hamilton}.

\begin{thm}\label{main-thm}
There exists an absolute constant $c > 0$ such that
for any sufficiently large integer $n$, 
any $(n,d,\lambda)$-graph 
with $\lambda/d\leq c$ and $d \geq \log^{6}n$
contains a Hamilton cycle.
\end{thm}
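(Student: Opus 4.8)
The plan is to follow the \emph{absorption} paradigm, which has become the standard route for building spanning structures in sparse (pseudo)random hosts, combined with the key spectral tool advertised in the abstract: a statement to the effect that if $W\subseteq V(G)$ is a random subset of an appropriate density, then the subgraph $G[W]$ inherits an $(|W|,d',\lambda')$-type bound with $\lambda'/d'$ still bounded by a small constant. First I would fix a small constant $\beta>0$ and select, by a random process, a set $R$ of \emph{reservoir} vertices together with a collection of short \emph{absorbing gadgets} — for each vertex $v$ an absorber $A_v$ that can be traversed in two ways, one using $v$ and one skipping it. Using the expander mixing lemma and the spectral gap hypothesis $\lambda/d\le c$ one shows that $G$ has the pseudorandom edge-distribution needed to find these gadgets greedily and, crucially, to show that each vertex of $G$ has many candidate absorbers, so that a random selection yields an \emph{absorbing path} $P_{\mathrm{abs}}$ which can later swallow \emph{any} small leftover set $Z\subseteq R$, of size up to some $\mu n$, into a path on the same endpoints with vertex set $V(P_{\mathrm{abs}})\cup Z$.

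Next I would cover almost all of $G$ outside the absorbing structure by a small number of vertex-disjoint paths. Here I would invoke the sparse-pseudorandom Hamiltonicity machinery already available in the literature (P\'osa rotation–extension together with the strong expansion of $(n,d,\lambda)$-graphs, as used by Krivelevich–Sudakov and refined by Glock–Correia–Sudakov): in an $(n,d,\lambda)$-graph with $\lambda/d$ a sufficiently small constant every subset of size at most roughly $n/2$ expands by a constant factor, which is exactly what is needed to run rotation–extension and to connect path endpoints. This yields a path (or a constant number of paths) covering all but a sublinear set $Z$ of vertices, where $Z$ is contained in the reservoir $R$. Then the absorbing property of $P_{\mathrm{abs}}$ is used to absorb $Z$, and a final connection step closes everything into a Hamilton cycle. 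The role of the hypothesis $d\ge \log^{10}n$ is to make the relevant concentration bounds (Chernoff/Azuma for the random reservoir and for vertex degrees into random sets, and the eigenvalue bound for $G[W]$) hold with room to spare; the constant $c=1/9000$ then falls out of tracking the expansion constants through the argument.

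The main obstacle, and the genuinely new ingredient, is controlling the second eigenvalue of the random induced subgraph. When we pass from $G$ to $G[W]$ for a random $W$ of density $p$, the expected adjacency matrix is not a constant multiple of the all-ones matrix, so a naive application of matrix concentration does not immediately give $s_2(G[W])\le \lambda'$; one has to write $A(G[W])$ as (roughly) a principal submatrix of $A(G)$ with a rank-one correction coming from the induced degrees, bound the deterministic part using $s_2(G)\le\lambda$ together with an interlacing/compression argument, and bound the random fluctuation using a matrix Bernstein or epsilon-net argument that is only affordable because $d\ge\log^{10}n$ (so that typical induced degrees concentrate around $pd$ with multiplicative error $o(1)$). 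Getting $\lambda'/d'$ to remain below a usable constant — rather than blowing up by a $\mathrm{polylog}$ factor — is the crux, and it is precisely this quantitative gain over the $\lambda/d\le c/(\log n)^{1/3}$ bound that lets us drop the polynomial-in-$n$ lower bound on $d$ present in \cite{glock2023hamilton} down to $d\ge\log^{10}n$. Once that lemma is in hand, the absorption scheme is run \emph{inside} the random reservoir-plus-cover sets, each of which is still a good expander, and the remaining steps are the now-standard bookkeeping.
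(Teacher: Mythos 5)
You correctly identify the paper's genuinely new ingredient (the spectral control of random induced subgraphs, Theorem \ref{thm:subgraphs-of-pseudorandom-graphs}, proved essentially as you sketch via a principal-submatrix norm bound plus a rank-one correction and interlacing), but the rest of your scheme has two gaps that are fatal in the regime $\lambda/d\le c$ constant and $d\ge\log^{10}n$. First, the covering step: you propose to invoke P\'osa rotation--extension ``machinery already available in the literature'' to cover all but a \emph{sublinear} set $Z$ by a constant number of paths. That machinery (Krivelevich--Sudakov \cite{krivelevich2003sparse}, Glock--Correia--Sudakov \cite{glock2023hamilton}) requires $\lambda/d$ tending to $0$ (roughly like $1/\log n$, or $d$ polynomial in $n$); with only a constant spectral ratio, expansion arguments (Tanner/expander mixing) give factor-$2$ expansion only for sets of size up to about $n/3-\Theta(c^2n)$, so rotation--extension yields a path missing a set of size $\Theta(c^2 n)$ --- linear, not sublinear --- and gives no mechanism for closing cycles or connecting endpoints. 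Second, the absorbers: a gadget $A_v$ that ``can be traversed in two ways, one using $v$ and one skipping it'' is a local structure (in the simplest form a triangle through $v$, or an edge inside $N(v)$), and the expander mixing lemma gives no information about edges inside neighborhoods when $d=\mathrm{polylog}(n)$ (the error term $\lambda\sqrt{|S||T|}$ swamps the main term for sets of size $d$); indeed the graph may have large girth, so no constant-size vertex absorbers exist at all. Consequently you can neither build the absorbing path greedily as described, nor absorb a linear-sized leftover vertex by vertex.

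This is exactly why the paper's architecture is different: its absorbers are \emph{global} objects (cycles of length $\Theta(\log n)$ with designated vertices $x_i,a_i,b_i,y_i$), the absorbing property is used to release the long arcs $P_i$ as connectors rather than to swallow leftover vertices, there is no leftover at all (the remaining vertices are covered exactly by $k=n/\log^5 n$ paths obtained from perfect matchings between consecutive random parts, Lemma \ref{lem:partition-W_1} and Lemma \ref{perfect-matching-between-random-subsets}), and all connections are made through the Connecting Lemma (Lemma \ref{lem:connecting-lemma}), a Montgomery-style layered star-matching argument applied to an auxiliary digraph of contracted absorbers. These steps --- which you defer to ``standard bookkeeping'' --- are where most of the work lies, and they are precisely the parts that cannot be replaced by rotation--extension or by local absorbers at constant spectral ratio. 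So the proposal, as written, does not constitute a proof: the spectral lemma is the right crux, but the combinatorial superstructure built around it would fail.
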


Our proof works for $c=\frac{1}{70000}$, although we made no attempt to optimize this constant.

It is worth mentioning that Dragani\'c, Montgomery, Correia, Pokrovskiy and Sudakov independently verified Conjecture \ref{conj} in \cite{draganic2024hamiltonicity}, and in particular, they proved a stronger statement than our main result. Their approach relies on extensions of the P\'osa rotation-extension technique and sorting networks. In contrast, while our work utilizes a previous result on closing vertex-disjoint paths into a cycle, it is primarily based on new machinery introduced in this paper, as summarized in Theorem \ref{thm:subgraphs-of-pseudorandom-graphs}. Specifically, we show that the spectral gap of a random induced subgraph of an $(n,d,\lambda)$-graph is typically bounded above by the spectral gap of the original graph, up to a constant factor.

To achieve this, we utilize results on norms of principal submatrices, such as the Rudelson-Vershynin Theorem \cite{rudelson2007sampling} (see Section \ref{s: random subgraphs}), and demonstrate that, with probability $1 - n^{-\Theta(1)}$, the spectral gap of the induced subgraph remains $O(\lambda/d)$ for a sufficiently large random vertex subset. We believe that this result will have further applications.

The paper is organized as follows. In Section \ref{s: outline}, we provide an outline of the proof. Section \ref{s: EML matrices} contains the proof of the expander mixing lemma for matrices, followed by an analysis of the special case for \emph{almost $(n,d,\lambda)$-graphs} in Section \ref{s: expanders}. In Section \ref{s: extendability}, we introduce the extendability property and reference a useful result from \cite{hyde2023spanning}, which ensures that vertex-disjoint paths can be used to connect designated pairs of vertices in $(n,d,\lambda)$-graphs. Our key lemma, which concerns the second singular value of a random induced subgraph of an $(n,d,\lambda)$-graph, is presented in Section \ref{s: random subgraphs}. Finally, in Section \ref{s: main proof}, we prove our main result, Theorem \ref{main-thm}, along with a generalized version, Theorem \ref{generalized-main-thm}, for ``almost'' $(n,d,\lambda)$-graphs. For the reader's convenience, we also include some standard tools from linear algebra and several technical proofs in the Appendix.

\subsection{Notation}

%\subsection*{Graphs}

For a graph $G=(V,E)$, let $e(G):=|E(G)|$. We mostly assume that $V=[n]$ for simplicity. For a subset $A\subseteq V$ of size $m$, we simply call it an \emph{$m$-set}, and we denote the family of all $m$-sets of $V$ by $\binom{V}{m}$. For two vertex sets $A, B \subseteq V (G)$, we define $E_G(A,B)$ to be the set of all edges $xy\in E(G)$ with $x\in A$ and $y\in B$, and set $e_G(A,B):=|E_G(A,B)|$. For two disjoint subsets $X,Y\subseteq V$, we write $G[X,Y]$ to denote the induced bipartite subgraph of $G$ with parts $X$ and $Y$. Moreover, we define $N_G(v)$ to be the neighborhood of a vertex $v$, and define $N_G(A) := \bigcup_{v\in A} N_G(v) \setminus A$ for a subset $A\subseteq V$. We write $N_G(A, B) = N_G(A) \cap B$ and for a vertex $v$, let
$N_G(v, B) = N_G(v) \cap B$. We also write $\deg_G(v):=|N_G(v)|$ and $\deg_G(v,B):=|N_G(v,B)|$. Finally, let $\delta(G)$ be the minimum degree of $G$ and let $\Delta(G)$ be the maximum degree of $G$.

The \emph{adjacency matrix} of $G$, denoted by $A:=A(G)$, is a $0/1$, $n\times n$ matrix such that $A_{i,j}=1$ if and only if $ij\in E(G)$. Moreover, given any subset $X\subseteq V$, its characteristic vector $\mathbbm{1}_X\in \mathbb{R}^n$ is defined by 
\[\mathbbm{1}_X(i)=\begin{cases}
    1 &\textrm{ if } i\in X\\
    0 &\textrm{ otherwise}
\end{cases}.\]

We will often omit the subscript to ease the notation, unless otherwise stated. 
Since all of our calculations are asymptotic, we will often omit floor and ceiling functions whenever they are not crucial.

\section{Proof Outline}\label{s: outline}

Our strategy for finding a Hamilton cycle in an $(n,d,\lambda)$-graph $G$ consists of two main phases. %First, we pick two disjoint vertex subsets $X,Y\subseteq V(G)$ of the same size $\Theta(n/\log^4n)$, where a result in \cite{hyde2023spanning} helps form a subgraph $S_{res}\subseteq G$ with $|V(S_{res})|=\Theta(n/\log n)$, covering $X$ and $Y$. 
First, taking two disjoint vertex subsets $X,Y\subseteq V(G)$ of the same size $\Theta(n/\log^4n)$, we find a subgraph $S_{res}\subseteq G$ with $|V(S_{res})|=\Theta(n/\log n)$ covering $X$ and $Y$, using a recent result (see Lemma \ref{lemma:find_sorting_network_in_expander} later) of Hyde, Morrison, M\"uyesser and Pavez-Sign\'e~\cite{hyde2023spanning}. This subgraph includes various path factors for later use, where each path has one endpoint in $X$ and the other in $Y$. Then, we cover $V(G)\setminus (V(S_{res})\setminus (X\cup Y))$ by vertex-disjoint paths, with one endpoint in $X$ and the other in $Y$. Now, we are allowed to close the paths into a cycle by using one path factor in the prepared subgraph $S_{res}$. Since all the vertices are used and passed through exactly once, the cycle is indeed a Hamilton cycle.

We now explain our method thoroughly. First, we take two random disjoint subsets $X,Y\subseteq V(G)$ of equal size $\Theta(n/\log^4 n)$. Using Proposition \ref{prop:weakerextendability}, we can deduce that whp, the empty graph $I(X\cup Y)$ is ``extendable" (see Definition \ref{def:dmextendable}), which further produces a crucial subgraph $S_{res}\subseteq G$ on $\Theta(n/\log n)$ vertices such that $X\cup Y\subseteq V(S_{res})$ (see Lemma \ref{lemma:find_sorting_network_in_expander}). The powerful property of $S_{res}$ that we will use is the following: for any ordering of the pairs in $(X,Y)$, there exists a path factor in $S_{res}$ connecting such pairs. This property will be used to connect the paths with endpoints in $X$ and $Y$ obtained in the second phase.

Next, since $|V(S_{res})|=\Theta(n/\log n)$, we can utilize its randomness in a way so that after removing it, the graph is still pseudorandom. Thus, by randomly partitioning $V(G)\setminus V(S_{res})$ into $|X|$-sets, if we can find a perfect matching between each two consecutive parts, we will obtain the desired vertex-disjoint paths $P_i$ connecting $x_i\in X$ and $y_i\in Y$. Now, using the path factor in $S_{res}$ connecting $(x_i,y_i)$s, we can concatenate all the paths $P_i$ into a cycle. 

It remains to ensure, whp, perfect matchings between two random disjoint subsets in an expander graph. To prove this, we demonstrate that the bipartite subgraph induced by each two consecutive parts is a good expander. 
Equivalently, it suffices to study the spectral properties of random induced subgraphs of $G$, and this is the main contribution of this paper. It is crucial to remark that although there are some previous results on randomly selecting edges, e.g. \cite{chung2007spectral}, we randomly pick vertex subsets instead of picking edges. 
Using results on norms of principal matrices, e.g.~Rudelson-Vershynin theorem in \cite{rudelson2007sampling}, we show that with probability at least $1-n^{-\Theta(1)}$, the spectral gap of a random induced subgraph of $(n,d,\lambda)$-graph is still $O(\lambda/d)$
 (see Theorem \ref{thm:subgraphs-of-pseudorandom-graphs}).

\section{Expander mixing lemma for matrices}\label{s: EML matrices}
%============

One of the most useful tools in spectral graph theory is the expander mixing lemma, which asserts that an $(n,d,\lambda)$-graph is an expander (see, e.g., \cite{hoory2006expander}).

\begin{thm}[Expander mixing lemma]\label{expander-mixing-lemma}
Let $G=(V,E)$ be an $(n,d,\lambda)$-graph. Then, for any two subsets $S,T\subseteq V$, we have
\[
\left|e(S,T)-\frac{d|S||T|}{n}\right|
\leq \lambda \sqrt{\abs{S} \left( 1-\frac{\abs{S}}{n} \right) \, \abs{T} \left( 1-\frac{\abs{T}}{n} \right)}.
\]
\end{thm}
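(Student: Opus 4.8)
The plan is to diagonalize the adjacency matrix $A:=A(G)$ and read the bilinear form $e(S,T)=\mathbbm{1}_S^\tran A\,\mathbbm{1}_T$ off in an eigenbasis. Since $G$ is an undirected graph, $A$ is real symmetric, so $\mathbb{R}^n$ has an orthonormal basis $u_1,\dots,u_n$ with $Au_i=\mu_i u_i$. Because $G$ is $d$-regular, every row and column sum of $A$ equals $d$, the normalized all-ones vector $u_1:=\mathbbm{1}_V/\sqrt n$ satisfies $Au_1=d\,u_1$, and the spectral radius of $A$ is $d$; thus $d=\mu_1$ is the largest singular value of $A$. Since $s_1(A)=d=|\mu_1|$ already attains the maximum of the multiset $\{|\mu_1|,\dots,|\mu_n|\}$, the second largest element of that multiset dominates $|\mu_i|$ for every $i\ge 2$, and the hypothesis $s_2(G)\le\lambda$ therefore gives $|\mu_i|\le\lambda$ for all $i\ge 2$. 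This spectral bookkeeping is the one point that deserves a little care; everything after it is a direct computation.

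Next I would expand the characteristic vectors in this eigenbasis: write $\mathbbm{1}_S=\sum_{i=1}^n\alpha_i u_i$ and $\mathbbm{1}_T=\sum_{i=1}^n\beta_i u_i$. Since $u_1=\mathbbm{1}_V/\sqrt n$, we get $\alpha_1=\langle \mathbbm{1}_S,u_1\rangle=|S|/\sqrt n$ and likewise $\beta_1=|T|/\sqrt n$, while Parseval's identity gives $\sum_i\alpha_i^2=\|\mathbbm{1}_S\|_2^2=|S|$ and $\sum_i\beta_i^2=|T|$. Then
\[
e(S,T)=\mathbbm{1}_S^\tran A\,\mathbbm{1}_T=\sum_{i=1}^n\mu_i\alpha_i\beta_i = d\,\frac{|S||T|}{n}+\sum_{i=2}^n\mu_i\alpha_i\beta_i,
\]
so the first term is exactly the ``expected'' count $d|S||T|/n$, and it remains to bound the tail sum.

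Finally, applying the triangle inequality, the bound $|\mu_i|\le\lambda$ for $i\ge 2$, and the Cauchy--Schwarz inequality,
\[
\left|e(S,T)-\frac{d|S||T|}{n}\right|=\left|\sum_{i=2}^n\mu_i\alpha_i\beta_i\right|\le\lambda\sum_{i=2}^n|\alpha_i|\,|\beta_i|\le\lambda\Big(\sum_{i=2}^n\alpha_i^2\Big)^{1/2}\Big(\sum_{i=2}^n\beta_i^2\Big)^{1/2}.
\]
Using $\sum_{i=2}^n\alpha_i^2=|S|-\alpha_1^2=|S|\big(1-|S|/n\big)$ and the analogous identity for $T$ then yields exactly the claimed inequality. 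The only genuine obstacle is the spectral bookkeeping in the first paragraph --- making sure that $s_2(G)\le\lambda$ controls $|\mu_i|$ for \emph{all} $i\ge 2$, not merely the second-largest eigenvalue in the signed order, which is where $d$-regularity is used to pin down $s_1(A)=d$; the rest is routine.
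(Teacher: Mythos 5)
Your proof is correct, including the one delicate point: since $A$ is symmetric its singular values are the absolute values of its eigenvalues, and because the copy $|\mu_1|=d$ attains the maximum, deleting it leaves exactly $\{|\mu_2|,\dots,|\mu_n|\}$, whose maximum is $s_2(A)\le\lambda$; so the hypothesis on the second \emph{singular} value really does control every eigenvalue other than the Perron one, even in degenerate cases such as bipartite or disconnected $d$-regular graphs where $-d$ or a repeated $d$ occurs. Your route differs slightly from the paper's: the paper never proves Theorem \ref{expander-mixing-lemma} directly, but instead establishes the more general Theorem \ref{thm:EMLM} for arbitrary nonnegative matrices, using the singular value decomposition of the normalized matrix $\bar{A}=L^{-1/2}AR^{-1/2}$ together with Observation \ref{obs: |N|} (which identifies the top singular pair), and then observes that the regular case follows since $\bar{A}=A/d$. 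Your argument is the classical direct specialization: you replace the SVD of $\bar{A}$ by the spectral theorem for $A$ itself, with $u_1=\mathbbm{1}_V/\sqrt{n}$ playing the role of the top singular vectors. The core computation --- expand $\mathbbm{1}_S,\mathbbm{1}_T$ in the orthonormal basis, peel off the top term $d|S||T|/n$, and apply Cauchy--Schwarz with $\sum_{i\ge 2}\alpha_i^2=|S|(1-|S|/n)$ --- is the same in both. What your direct proof buys is economy (no normalization machinery, no general matrix statement); what the paper's route buys is the general Theorem \ref{thm:EMLM}, which it needs anyway for almost regular graphs and bipartite expanders later on.
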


We will need a more general version of the expander mixing lemma which can be applied to non-regular graphs, digraphs, and even to general $m \times n$ matrices $A$. To state such a general result, it is convenient to normalize $A$ in the following way:

\begin{defi}[Normalized matrix] \label{def:N(A)} 
    Let $A$ be an $m\times n$ matrix. Let $L=L(A)$ be the $m\times m$ diagonal matrix with $L_{i,i}=\sum_{j}A_{i,j}$ for all $i$ (that is, the sum of entries in the $i$th row), and $R=R(A)$ be the $n\times n$ diagonal matrix with $R_{j,j}=\sum_{i}A_{i,j}$ (that is, the sum of entries in the $j$th column). The \emph{normalized matrix} of the matrix $A$ is defined as
    $$
    \bar{A}:=L^{-1/2}AR^{-1/2}.
    $$ 
    In particular, if $A$ is a symmetric $n \times n$ matrix, then the diagonal matrix $L(A)=R(A)\eqqcolon D(A)$ is called the {\em degree matrix} of $A$.
\end{defi}

Since the notion of eigenvalues is undefined for non-square matrices, it would be convenient for us to work with \emph{singular values} which are defined as follows for all matrices.

\begin{defi}[Singular values] \label{def:singular-value}
Let $A$ be a real $m\times n$ matrix. The \emph{singular values} of $A$ are the nonnegative square roots of the eigenvalues of the symmetric positive semidefinite matrix $A^\tran A$. We will always assume that $s_k(A)$ is the $k$th singular value of $A$ in nonincreasing order. In particular, the singular values and the eigenvalues of a symmetric positive semidefinite matrix $A$ coincide.
\end{defi}

 We are now ready to state a more general version of the expander mixing lemma. 

\begin{thm}[Expander mixing lemma for %non-symmetric
matrices] \label{thm:EMLM}
 Let $A$ be an $m\times n$ matrix with nonnegative entries, and let $\bar{A}$ be the normalized matrix of $A$. Then, for any two subsets $S\subseteq [m]$ and $T\subseteq [n]$, we have
 \[\left|A(S,T)-\frac{A(S,n) \, A(m,T)}{A(m,n)}\right|
 \leq s_2(\bar{A})\sqrt{A(S,n)\left(1-\frac{A(S,n)}{A(m,n)}\right) \, A(m,T) \left(1-\frac{A(m,T)}{A(m,n)}\right)},\]
 where we adopt the notation $A(S,T) \coloneqq \sum_{i \in S, j \in T} A_{i,j}$, 
 and we abbreviate $A(S,n) \coloneqq A(S,[n])$, $A(m,T) \coloneqq A([m],T)$, 
 and $A(m,n) \coloneqq A([m],[n])$. 
\end{thm}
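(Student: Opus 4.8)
The plan is to mimic the classical proof of the Expander Mixing Lemma (Theorem~\ref{expander-mixing-lemma}), but at the level of the normalized matrix $\bar A$. First I would set up the relevant algebraic facts about $\bar A = L^{-1/2}AR^{-1/2}$. Observe that $\bar A$ has a natural ``stationary'' structure: the vector $u = L^{1/2}\mathbbm{1}_{[m]}$ (i.e. $u_i = \sqrt{L_{i,i}}$) and $v = R^{1/2}\mathbbm{1}_{[n]}$ satisfy $\bar A v = \frac{1}{\sqrt{L}}A\mathbbm{1}_{[n]} = \frac{1}{\sqrt{L}} L\mathbbm{1}_{[m]} = u$, and similarly $\bar A^{\tran} u = v$ (using $R\mathbbm{1}_{[n]} = A^{\tran}\mathbbm{1}_{[m]}$). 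Moreover $\|u\|^2 = \sum_i L_{i,i} = A(m,n) = \sum_j R_{j,j} = \|v\|^2$, so after normalizing $\hat u = u/\|u\|$ and $\hat v = v/\|v\|$ we get $\bar A \hat v = \hat u$ and $\bar A^{\tran}\hat u = \hat v$, i.e. $\hat u,\hat v$ form a singular pair of $\bar A$ with singular value exactly $1$; one should check this is in fact the \emph{largest} singular value (it is, since $\bar A$ is entrywise nonnegative with these row/column sums, so $s_1(\bar A) = 1$ by a Perron–Frobenius / Gershgorin-type argument applied to $\bar A \bar A^{\tran}$, or by noting $\bar A$ is a normalized nonnegative matrix).

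Next I would rewrite the quantity to be bounded in terms of $\bar A$. For $S\subseteq[m]$, $T\subseteq[n]$, let $p = R^{1/2}\mathbbm{1}_T$ and $q = L^{1/2}\mathbbm{1}_S$, so that $\ip{q, \bar A p} = \mathbbm{1}_S^{\tran} L^{1/2}(L^{-1/2}AR^{-1/2})R^{1/2}\mathbbm{1}_T = \mathbbm{1}_S^{\tran} A \mathbbm{1}_T = A(S,T)$. Also $\ip{q,\hat u} = \sum_{i\in S} L_{i,i}/\|u\| = A(S,n)/\sqrt{A(m,n)}$ and similarly $\ip{p,\hat v} = A(m,T)/\sqrt{A(m,n)}$, so that $\ip{q,\hat u}\ip{\hat v, p} = A(S,n)A(m,T)/A(m,n)$, which is exactly the ``expected'' term. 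Therefore
\[
A(S,T) - \frac{A(S,n)A(m,T)}{A(m,n)} = \ip{q,\bar A p} - \ip{q,\hat u}\ip{\hat v,p} = \ip{q_\perp, \bar A\, p_\perp},
\]
where $q_\perp = q - \ip{q,\hat u}\hat u$ and $p_\perp = p - \ip{p,\hat v}\hat v$; here one uses $\bar A \hat v = \hat u$ and $\bar A^{\tran}\hat u = \hat v$ to kill the cross terms (the term $\ip{q,\hat u}\ip{\hat v,\bar A p}$ etc. all collapse correctly). Then by Cauchy–Schwarz and the variational characterization of the second singular value — $\bar A$ restricted to the orthogonal complement of $\hat v$ (in the domain) maps into the orthogonal complement of $\hat u$ with operator norm $s_2(\bar A)$ — we get $|\ip{q_\perp,\bar A p_\perp}| \le s_2(\bar A)\,\|q_\perp\|\,\|p_\perp\|$.

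Finally I would compute the two norms. We have $\|q\|^2 = \sum_{i\in S} L_{i,i} = A(S,n)$ and $\ip{q,\hat u}^2 = A(S,n)^2/A(m,n)$, hence $\|q_\perp\|^2 = A(S,n) - A(S,n)^2/A(m,n) = A(S,n)(1 - A(S,n)/A(m,n))$; symmetrically $\|p_\perp\|^2 = A(m,T)(1 - A(m,T)/A(m,n))$. Substituting these into the Cauchy–Schwarz bound yields exactly the claimed inequality. The step I expect to need the most care is justifying that $s_1(\bar A) = 1$ with singular vectors $\hat u, \hat v$, and consequently that the operator norm of $\bar A$ on the complementary subspaces is precisely $s_2(\bar A)$ — this requires knowing that $\hat u$ is a \emph{top} singular vector, not merely \emph{a} singular vector, which follows from the nonnegativity of the entries of $A$ (so the row/column sums are nonnegative) together with the standard fact that for a normalized nonnegative matrix the spectral radius of $\bar A\bar A^{\tran}$ equals $1$. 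Everything else is bookkeeping with diagonal matrices and Cauchy–Schwarz, which I would relegate to routine computation.
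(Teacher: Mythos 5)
Your proposal is correct and follows essentially the same route as the paper: your $\hat u,\hat v$ are exactly the vectors $\vu_1,\vv_1$ of Observation~\ref{obs: |N|}, your decomposition of $q=L^{1/2}\mathbbm{1}_S$ and $p=R^{1/2}\mathbbm{1}_T$ into components along $\hat u,\hat v$ plus orthogonal remainders is just the paper's expansion of $\chi_S,\chi_T$ in the singular bases, and the estimate $\abs{\ip{q_\perp,\bar A p_\perp}}\le s_2(\bar A)\norm{q_\perp}\norm{p_\perp}$ together with the computations of $\norm{q_\perp},\norm{p_\perp}$ reproduces the paper's Cauchy--Schwarz step. The only point you treat more lightly is $s_1(\bar A)=\norm{\bar A}=1$, which the paper verifies by a direct computation using nonnegativity (Observation~\ref{obs: |N|}); your alternative justification is also sound, since $\bar A\bar A^\tran$ is entrywise nonnegative and has the strictly positive eigenvector $\hat u$ with eigenvalue $1$, forcing its spectral radius to equal $1$.
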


Observe that Theorem~\ref{thm:EMLM} trivially implies Theorem~\ref{expander-mixing-lemma}, since the adjacency matrix of a $d$-regular graph satisfies 
\[
\bar{A}=\frac{1}{d}A.
\]

The proof of Theorem~\ref{thm:EMLM} is almost identical to the standard proof of Theorem~\ref{expander-mixing-lemma} that can be found e.g. as Proposition 4.3.2 in \cite{brouwer2011spectra}. Since we could not find a reference for this specific statement and its proof, we include the proof of Theorem~\ref{thm:EMLM} for the convenience of the reader, without claiming any originality. It is based on the following crucial observation.

\begin{obs} \label{obs: |N|}
    Let $A$ be an $m \times n$ matrix with nonnegative entries.
    Let $a \coloneqq A(m,n)$ and let $\mathbbm{1}_n$ denote the vector in $\R^n$ whose all coordinates are equal to $1$. Consider the vectors $\vu_1 \coloneqq a^{-1/2} L^{1/2} \mathbbm{1}_m$ and $\vv_1 \coloneqq a^{-1/2} R^{1/2} \mathbbm{1}_n$. Then: 
    \begin{enumerate}
        \item both $\vu_1$ and $\vv_1$ are unit vectors;
        \item $\bar{A} \vv_1=\vu_1$;
        \item $s_1(\bar{A}) = \norm{\bar{A}} = \vu_1^\tran \bar{A} \vv_1 = 1$.
    \end{enumerate}
\end{obs}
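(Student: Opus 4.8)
The statement to prove is Observation~\ref{obs: |N|}, which establishes three basic facts about the normalized matrix $\bar{A}$ and the distinguished vectors $\vu_1, \vv_1$. Let me sketch a proof plan.

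\medskip

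The plan is to verify the three claims directly from the definitions, using only elementary linear algebra. For claim (1), I would compute $\norm{\vv_1}^2 = \vv_1^\tran \vv_1 = a^{-1} \mathbbm{1}_n^\tran R \mathbbm{1}_n = a^{-1} \sum_j R_{j,j} = a^{-1} \sum_j \sum_i A_{i,j} = a^{-1} A(m,n) = a^{-1} \cdot a = 1$, where I used that $R$ is the diagonal matrix of column sums and hence $\mathbbm{1}_n^\tran R \mathbbm{1}_n$ is just the sum of all its diagonal entries, which equals the sum of all entries of $A$. The computation for $\vu_1$ is symmetric, using that $L$ is the diagonal matrix of row sums and that the sum of all row sums also equals $a = A(m,n)$. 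This requires $a \neq 0$ for the normalization to make sense, and also that all diagonal entries of $L$ and $R$ are positive so that $L^{-1/2}$ and $R^{-1/2}$ are well-defined — I would note this is a standing assumption (no zero rows or columns), which holds in all our applications since the graphs involved have positive minimum degree.

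\medskip

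For claim (2), I would compute directly:
\[
\bar{A}\vv_1 = L^{-1/2} A R^{-1/2} \cdot a^{-1/2} R^{1/2} \mathbbm{1}_n = a^{-1/2} L^{-1/2} A R^{-1/2} R^{1/2} \mathbbm{1}_n = a^{-1/2} L^{-1/2} A \mathbbm{1}_n.
\]
Now $A \mathbbm{1}_n$ is the vector of row sums of $A$, i.e. $(A\mathbbm{1}_n)_i = \sum_j A_{i,j} = L_{i,i}$, so $A\mathbbm{1}_n = L \mathbbm{1}_m$. Hence $\bar{A}\vv_1 = a^{-1/2} L^{-1/2} L \mathbbm{1}_m = a^{-1/2} L^{1/2}\mathbbm{1}_m = \vu_1$, using that $L$ is diagonal so $L^{-1/2} L = L^{1/2}$.

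\medskip

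For claim (3), the chain of equalities is $s_1(\bar{A}) = \norm{\bar{A}} = \vu_1^\tran \bar{A}\vv_1 = 1$. Reading right to left: $\vu_1^\tran \bar{A}\vv_1 = \vu_1^\tran \vu_1 = 1$ by claims (2) and (1). For the operator norm bound, by the variational characterization $\norm{\bar{A}} = \sup_{\norm{x}=\norm{y}=1} y^\tran \bar{A} x \geq \vu_1^\tran \bar{A}\vv_1 = 1$, so $\norm{\bar{A}} \geq 1$; for the reverse inequality $\norm{\bar{A}} \leq 1$ I would invoke the standard fact that a matrix whose rows and columns are "sub-stochastic in the $\bar{A}$-normalization" has norm at most $1$ — concretely, $\bar{A}$ is (after the symmetric scaling) a matrix with nonnegative entries whose row and column sums, when weighted appropriately, are controlled; the cleanest route is to observe that $\norm{\bar{A}} \le \sqrt{\norm{\bar{A}}_{1\to 1}\,\norm{\bar{A}}_{\infty\to\infty}}$ (Schur test / Riesz–Thorin), and then check that for any row $i$, $\sum_j \bar{A}_{i,j} = \sum_j L_{i,i}^{-1/2} A_{i,j} R_{j,j}^{-1/2}$; this is not obviously $\le 1$ entrywise-summed, so the genuinely clean argument is instead: $L^{-1/2}AR^{-1/2}$ is conjugate-equivalent in norm to the row-stochastic matrix $P = L^{-1}A$ (since $\bar{A} = L^{1/2} P R^{-1/2} \cdot (\text{rescale})$...). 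Since $s_1(\bar A)=\norm{\bar A}$ is immediate (the top singular value of any matrix equals its operator norm), the only real content is $\norm{\bar{A}} \le 1$. I expect this to be the main obstacle, and I would handle it by the symmetrization trick: $\norm{\bar{A}}^2 = \norm{\bar{A}^\tran \bar{A}} = \norm{R^{-1/2} A^\tran L^{-1} A R^{-1/2}}$, and the matrix $M := R^{-1/2} A^\tran L^{-1} A R^{-1/2}$ is symmetric positive semidefinite with $M\vv_1 = \vv_1$ (by claim (2) applied twice), so $1$ is an eigenvalue; it then remains to show $1$ is the \emph{largest} eigenvalue, which follows because $M$ is similar (via $R^{1/2}$) to the product of two row-stochastic matrices $R^{-1}A^\tran L^{-1}A$, whose spectral radius is at most $1$ since each factor has all row sums equal to $1$ and nonnegative entries, hence operator norm $1$ in $\ell_\infty$, hence spectral radius at most $1$. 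I would present this last step succinctly, noting it is the Perron–Frobenius bound for (products of) stochastic matrices.
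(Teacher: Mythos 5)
Your proposal is correct, and parts (1) and (2) are exactly the routine computations the paper leaves as ``readily follow from the definitions.'' Where you genuinely diverge is the only substantive point, the bound $\norm{\bar{A}}\le 1$. The paper proves it in one line: for unit vectors $\vx,\vy$ it expands $0\le\sum_{i,j}A_{i,j}\bigl(x_i/\sqrt{L_{i,i}}-y_j/\sqrt{R_{j,j}}\bigr)^2=2-2\,\vx^\tran\bar{A}\vy$, using nonnegativity of the entries, so $\vx^\tran\bar{A}\vy\le1$ for all unit $\vx,\vy$. You instead symmetrize: $\norm{\bar{A}}^2=\norm[1]{\bar{A}^\tran\bar{A}}$ with $\bar{A}^\tran\bar{A}=R^{-1/2}A^\tran L^{-1}AR^{-1/2}$, conjugate by $R^{1/2}$ to get $R^{-1}A^\tran L^{-1}A$, a product of (rectangular) nonnegative matrices with all row sums $1$, and bound its spectral radius by its $\ell_\infty\to\ell_\infty$ norm, which is $1$; since $\bar{A}^\tran\bar{A}$ is symmetric PSD, its norm equals its spectral radius. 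This is a valid and complete alternative, resting on the same two hypotheses as the paper's argument (nonnegative entries, no zero rows or columns so that $L^{-1/2},R^{-1/2}$ exist), and it has the mild virtue of exhibiting $\bar A$ as the symmetrized random-walk/transition operator. The paper's trick buys brevity. Two small points to clean up in your write-up: the detours (the naive Schur test, the unfinished ``conjugate-equivalent \ldots (rescale)'' remark) should be deleted, as they are not needed and the first one does not work as stated; and the eigenvector claim $\bar{A}^\tran\bar{A}\vv_1=\vv_1$ is not literally ``claim (2) applied twice'' but claim (2) applied to $A^\tran$ (giving $\bar{A}^\tran\vu_1=\vv_1$) --- and in fact it is superfluous, since $\norm{\bar{A}}\ge1$ already follows from $\vu_1^\tran\bar{A}\vv_1=1$, exactly as in the paper.
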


\begin{proof} 
The first two parts readily follow from the definitions of $a$, $L$, $R$, and $\bar{A}$.
As for the third part, the equation $s_1(\bar{A}) = \norm{\bar{A}}$ holds for any matrix. Let us show that $\|\bar{A}\|\leq 1$. For every $\|\vx\|_2=\|\vy\|_2=1$, we have
\[0\leq \sum_{i\in [m], j \in[n]}A_{i,j}\left(\frac{x_i}{\sqrt{L_{i,i}}}-\frac{y_j}{\sqrt{R_{j,j}}}\right)^2=2-2\sum_{i\in [m], j \in[n]}\frac{A_{i,j}x_iy_j}{\sqrt{L_{i,i}R_{j,j}}}=2-2\vx^\tran \bar{A} \vy.\]
This implies that $\vx^\tran \bar{A} \vy\leq 1$ for all unit vectors $\vx$ and $\vy$, which yields $\|\bar{A}\|\leq 1$.

Moreover, by definition of $\bar{A}$, we have $\vu_1^\tran \bar{A} \vv_1=1$. Therefore, by definition of the operator norm, it follows that $\norm{\bar{A}} \ge 1$.
The observation is proved.
\end{proof}

\medskip

Now we are ready to prove Theorem \ref{thm:EMLM}.

\begin{proof} [Proof of Theorem \ref{thm:EMLM}]
Let $r=\rank(\bar{A})$, and let $1=s_1 \geq s_2 \geq \ldots \geq s_r> 0$ be all the positive singular values of $\bar{A}$ in nonincreasing order. 
Applying the singular value decomposition theorem (Theorem \ref{thm:SVD}) combined with Observation~\ref{obs: |N|}, we can find orthonormal bases $\{\vu_1,\ldots,\vu_m\}$ of $\mathbb{R}^m$ and $\{\vv_1,\ldots,\vv_n\}$ of $\mathbb{R}^n$ with vectors $\vv_1$ and $\vu_1$ defined in Observation~\ref{obs: |N|}, and such that
\[
\bar{A}=\sum_{j=1}^r s_j \vu_j \vv_j^\tran.
\] 
In particular, $\bar{A}\vv_j=s_j\vu_j$ for $j=1,\ldots,r$ and $\bar{A}\vv_j=\mathbf{0}$ for $j>r$. Now, let $S\subseteq [m]$ and $T\subseteq [n]$ be two arbitrary subsets. Then 
\[
A(S,T) = \mathbbm{1}_S^\tran A \mathbbm{1}_T
= \chi_S^\tran \bar{A} \chi_T,
\quad \text{where} \quad
\chi_S \coloneqq L^{1/2}\mathbbm{1}_S, \quad 
\chi_T \coloneqq R^{1/2}\mathbbm{1}_T.
\]
Expanding both vectors as
\[\chi_S=\sum_{j=1}^m a_j\vu_j, \textrm{ and } \chi_T=\sum_{j=1}^n b_j\vv_j,\]
we obtain 
\[
    A(S,T) = \sum_{j=1}^r s_j a_j b_j = a_1 b_1 + \sum_{j=2}^r s_j a_j b_j.
\]
Recall from Observation~\ref{obs: |N|} that all singular values of $\bar{A}$ are bounded by $1$, and $r = \rank(\bar{A}) \le \min\{m,n\}$. 
Thus, by Cauchy-Schwarz inequality, we have 
\begin{equation}    \label{AST}
\abs{A(S,T) - a_1 b_1 } \le \sum_{j=2}^r \abs{a_j b_j} 
\le \left( \sum_{j=2}^m a_j^2 \right)^{1/2} \left( \sum_{j=2}^n b_j^2 \right)^{1/2}.
\end{equation}

Now observe that 
$a_1=\ip{\chi_S,\vu_1} = a^{-1/2} A(S,n)$ and 
$b_1=\ip{\chi_T,\vv_1} = a^{-1/2} A(m,T)$, so
\[
a_1 b_1 = \frac{A(S,n) A(m,T)}{a}. 
\]
Moreover, 
\[
\sum_{j=2}^m a_j^2 
= \norm{\chi_S}_2^2 - a_1^2
= A(S,n) - \frac{A(S,n)^2}{a}
= A(S,n)\left(1-\frac{A(S,n)}{a}\right),
\]
and similarly 
\[
\sum_{j=2}^n b_j^2 
= A(m,T) \left(1-\frac{A(m,T)}{a}\right).
\]
Substitute the last three identities into \eqref{AST} to complete the proof.
\end{proof}

\section{Almost regular expanders}\label{s: expanders}
%==========

Our argument relies on some spectral properties of random subgraphs of $(n,d,\lambda)$-graphs. Since random subgraphs are not expected to be {\em exactly} regular, we extend the definition of $(n,d,\lambda)$-graphs as follows: 

\begin{defi}[Almost $(n,d,\lambda)$-graphs]\label{def:pseudorandom}
    Let $d,\lambda>0$ and $\gamma\in[0,1)$. We say that a graph $G$ is an \emph{$(n, (1\pm\gamma)d, \lambda)$-graph} if\footnote{In this definition and elsewhere in the paper, we write $a = b \pm c$ as a shorthand for the double-sided inequality $b-c \le a \le b+c$. We use other similar abbreviations, whose exact meaning should be clear from context.} $G$ is a graph on $n$ vertices whose all degrees are $(1\pm\gamma)d$ and the second singular value of the adjacency matrix $A$ of $G$ satisfies $s_2(A) \le \lambda$.
\end{defi}

Almost $(n,d,\lambda)$-graphs behave similar to exact $(n,d,\lambda)$-graphs in many ways. If $G$ is an ({\em exactly}) $d$-regular graph with adjacency matrix $A$, its normalized adjacency matrix is obviously
\[
\bar{A}=\frac{1}{d}A
\]
according to Definition~\ref{def:N(A)}. If $G$ is an {\em almost} $d$-regular graph, its degree matrix $D=\diag(d_1,\ldots,d_n)$ is close to $dI$, and we can expect that
\[
\bar{A} = D^{-1/2} A D^{-1/2} \approx \frac{1}{d}A
\]
in some sense.
Below we show that such an approximation indeed holds in the sense of the closeness of all singular values. 

\begin{cor}[Singular values of almost regular graphs]    \label{cor: singular values normalized}
    Let $\gamma \in [0,1)$ and $d>0$. Let $G$ be a graph whose all vertices have degrees $(1\pm\gamma)d$. Then the adjacency matrix $A$ and the normalized adjacency matrix $\bar{A}$ of the graph $G$ satisfy
    \[
    \frac{s_k(A)}{(1+\gamma)d} \le s_k(\bar{A}) \le \frac{s_k(A)}{(1-\gamma)d} 
    \quad \text{for all } k \in [n].
    \]
\end{cor}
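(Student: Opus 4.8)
The plan is to compare $A$ and $\bar A = D^{-1/2} A D^{-1/2}$ via the diagonal conjugation $D$, and to control this conjugation using the fact that $D$ is close to a scalar multiple of the identity. Concretely, since all degrees of $G$ lie in the interval $[(1-\gamma)d, (1+\gamma)d]$, the degree matrix $D = \diag(d_1,\dots,d_n)$ satisfies $(1-\gamma)d \cdot I \preceq D \preceq (1+\gamma)d \cdot I$, and hence the operator norms obey
\[
\norm{D^{1/2}} \le \sqrt{(1+\gamma)d}, \qquad \norm{D^{-1/2}} \le \frac{1}{\sqrt{(1-\gamma)d}}.
\]

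Next I would write $A = D^{1/2} \bar A D^{1/2}$ and apply the standard submultiplicativity of singular values under multiplication by matrices of bounded norm. The relevant fact (which I would cite from the linear-algebra appendix, or prove in one line from the min-max characterization of singular values) is that for any matrices $B$ and invertible $P, Q$ one has $s_k(PBQ) \le \norm{P}\,\norm{Q}\, s_k(B)$. Applying this to $A = D^{1/2}\bar A D^{1/2}$ gives
\[
s_k(A) \le \norm{D^{1/2}}^2 \, s_k(\bar A) \le (1+\gamma)d \cdot s_k(\bar A),
\]
which rearranges to the lower bound $s_k(\bar A) \ge s_k(A)/((1+\gamma)d)$. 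Symmetrically, applying the same fact to $\bar A = D^{-1/2} A D^{-1/2}$ gives
\[
s_k(\bar A) \le \norm{D^{-1/2}}^2 \, s_k(A) \le \frac{s_k(A)}{(1-\gamma)d},
\]
which is exactly the upper bound. Combining the two displays yields the claimed two-sided estimate for every $k \in [n]$.

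I do not anticipate a genuine obstacle here; the only point requiring a little care is the inequality $s_k(PBQ) \le \norm P\, \norm Q\, s_k(B)$, since one must make sure it holds for the $k$th singular value and not merely for the largest. This follows from the Courant–Fischer / min-max formula $s_k(M) = \min_{\dim V = n-k+1} \max_{x \in V, \norm x = 1} \norm{Mx}$ (or from the Ky Fan characterization), together with the observation that $Q$ maps subspaces to subspaces of the same dimension and distorts norms by at most $\norm Q$ and $\norm{Q^{-1}}$; since here $Q = D^{\pm 1/2}$ is symmetric positive definite and invertible, everything goes through cleanly. One should also note $\gamma < 1$ guarantees $(1-\gamma)d > 0$ so that $D^{-1/2}$ is well defined. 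I would state the needed singular-value inequality as a lemma in the appendix and invoke it here, keeping the proof of the corollary to the three or four lines above.
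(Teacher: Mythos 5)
Your proposal is correct and matches the paper's argument essentially verbatim: the paper also writes $A = D^{1/2}\bar{A}D^{1/2}$ (and symmetrically $\bar{A}=D^{-1/2}AD^{-1/2}$), invokes an appendix lemma giving $s_k$ of a product bounded by the operator norms of the outer factors times $s_k$ of the middle factor (proved there via the Courant--Fischer minimax formula, just as you suggest), and uses $\norm{D^{1/2}}^2=\max_i d_i\le(1+\gamma)d$ and the analogous bound for $D^{-1/2}$. No gaps.
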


\begin{proof}
    Using the chain rule for singular values (Lemma~\ref{lem: chain rule}), we obtain
    \[
    s_k(A) = s_k \left( D^{1/2}\bar{A}D^{1/2} \right) \le \norm[1]{D^{1/2}}^2 s_k(\bar{A}).
    \]
    Since $\norm[1]{D^{1/2}}^2 = \norm{D} = \max_i d_i \le (1+\gamma)d$, the lower bound in Corollary~\ref{cor: singular values normalized} follows. The upper bound can be proved similarly. 
\end{proof}

\subsection{Expander mixing lemma for almost regular expanders}\label{s: EML graphs}
%---------------

Let us specialize Theorem~\ref{thm:EMLM} for almost $(n,d,\lambda)$-graphs. 

\begin{cor}[Expander mixing lemma for almost $(n,d,\lambda)$-graphs]\label{cor:EML-almost-regular}
    Let $G$ be an $(n,(1\pm\gamma)d,\lambda)$-graph. Then, for any two subsets $S,T\subseteq V(G)$, we have
    \begin{equation}    \label{eq: EML-almost-regular}
        \frac{(1-\gamma)^2d|S||T|}{(1+\gamma)n}-\var
        \leq e(S,T)
        \leq \frac{(1+\gamma)^2d|S||T|}{(1-\gamma)n}+\var,   
    \end{equation}
    where 
    \[
    \var=\frac{1+\gamma}{1-\gamma}\cdot \lambda \sqrt{|S||T|}. 
    \]
\end{cor}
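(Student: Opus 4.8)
The plan is to derive Corollary~\ref{cor:EML-almost-regular} by combining the matrix expander mixing lemma (Theorem~\ref{thm:EMLM}) applied to the adjacency matrix $A = A(G)$ with the singular value comparison for almost regular graphs (Corollary~\ref{cor: singular values normalized}). First I would set $A := A(G)$, which has nonnegative (indeed $0/1$) entries, so Theorem~\ref{thm:EMLM} applies directly. Since every row and column sum of $A$ equals the degree of the corresponding vertex, which lies in $[(1-\gamma)d,(1+\gamma)d]$, we have $A(S,n) = \sum_{v\in S}\deg(v)$, $A(m,T) = \sum_{v\in T}\deg(v)$, and $A(m,n) = \sum_{v\in V}\deg(v)$, all of which I can sandwich between $(1-\gamma)d$ and $(1+\gamma)d$ times $|S|$, $|T|$, and $n$ respectively. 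Also $A(S,T) = e(S,T)$ when $S,T$ are disjoint, and more generally $A(S,T)$ counts ordered pairs so one must be slightly careful with the overlap; since the statement as phrased uses $e(S,T)$ I would either restrict to the disjoint case or note that the standard convention for $e(S,T)$ in this paper (edges with one endpoint in each set) makes $A(S,T) = e(S,T) + e(S\cap T)$, and the cross terms are absorbed into the error bound — I will follow whatever convention the surrounding text uses and not belabor this.

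The main computation is then bounding the ``main term'' $A(S,n)A(m,T)/A(m,n)$ and the ``error term'' $s_2(\bar A)\sqrt{\cdots}$ separately. For the main term, using the degree bounds gives
\[
\frac{A(S,n)\,A(m,T)}{A(m,n)}
\;\le\; \frac{(1+\gamma)d|S|\cdot(1+\gamma)d|T|}{(1-\gamma)dn}
\;=\; \frac{(1+\gamma)^2 d|S||T|}{(1-\gamma)n},
\]
and symmetrically a lower bound of $\frac{(1-\gamma)^2 d|S||T|}{(1+\gamma)n}$. For the error term, I would bound the square root factor crudely by dropping the $(1 - A(S,n)/A(m,n))$ and $(1-A(m,T)/A(m,n))$ factors (each at most $1$), leaving $\sqrt{A(S,n)A(m,T)} \le (1+\gamma)d\sqrt{|S||T|}$; and for $s_2(\bar A)$ I invoke Corollary~\ref{cor: singular values normalized}, which gives $s_2(\bar A) \le s_2(A)/((1-\gamma)d) \le \lambda/((1-\gamma)d)$. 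Multiplying, the factors of $d$ cancel and the error term is at most $\frac{1+\gamma}{1-\gamma}\lambda\sqrt{|S||T|} = \var$, exactly as claimed. Combining the main-term bounds with the $\pm\var$ error from Theorem~\ref{thm:EMLM} yields the two-sided inequality \eqref{eq: EML-almost-regular}.

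I do not expect any genuine obstacle here — this is a bookkeeping exercise: the only things to watch are (i) keeping the direction of the inequalities consistent when $(1\pm\gamma)$ factors appear in numerators versus the denominator $A(m,n)$ (the denominator wants its lower bound $(1-\gamma)dn$ to maximize the fraction and its upper bound to minimize it), and (ii) the $e(S,T)$ versus ordered-pair convention mentioned above. Neither is deep. If one wanted the cleanest statement I would present it for disjoint $S,T$, in which case $A(S,T)=e(S,T)$ on the nose and the proof is a two-line substitution into Theorem~\ref{thm:EMLM} followed by the crude bounds above.
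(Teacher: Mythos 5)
Your proposal is correct and follows essentially the same route as the paper: apply Theorem~\ref{thm:EMLM} to the adjacency matrix, bound $s_2(\bar A)\le\lambda/((1-\gamma)d)$ via Corollary~\ref{cor: singular values normalized}, replace the degree sums by $(1\pm\gamma)d$ times the set sizes, and drop the $(1-\cdot)$ factors to get the error term $\var$. The caveat you raise about $A(S,T)$ versus $e(S,T)$ for overlapping sets is the only point where the paper is silent (it simply writes $A(S,T)=e(S,T)$), and it is immaterial for how the corollary is used.
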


\begin{proof}
Let $A$ and $\bar{A}$ be the adjacency and the normalized adjacency matrices of $G$, respectively. Theorem~\ref{thm:EMLM} yields
\begin{equation}   \label{AST simplified}
    \left|A(S,T)-\frac{A(S,n) \, A(n,T)}{A(n,n)}\right|
    \leq s_2(\bar{A})\sqrt{A(S,n) A(n,T)}.
\end{equation}
By Corollary~\ref{cor: singular values normalized} and assumption, we have 
\[
s_2(\bar{A}) \le \frac{s_2(A)}{(1-\gamma)d} \le \frac{\lambda}{(1-\gamma)d}.
\]
Moreover, since $A$ is an adjacency matrix, we have
$A(S,T)=e(S,T)$, 
$A(S,n) = \sum_{v\in S}\deg(v) = (1\pm\gamma)d \abs{S}$,
$A(n,T) = \sum_{v\in T}\deg(v) = (1\pm\gamma)d \abs{T}$,
$A(n,n) = \sum_{v\in V(G)}\deg(v) = (1\pm\gamma)d \abs{V(G)} = (1\pm\gamma)dn$.
Substitute all this into \eqref{AST simplified} and use triangle inequality to complete the proof.
\end{proof}

Sometimes all we need is at least one edge between disjoint sets of vertices $S$ and $T$. 
Corollary~\ref{cor:EML-almost-regular} provides a convenient sufficient condition for this:

\begin{cor}[At least one edge]\label{cor:expander}
Let $G$ be an $(n,(1\pm\gamma)d,\lambda)$-graph. Let $S,T\subseteq V(G)$ be two disjoint subsets with 
\[
\sqrt{\abs{S}\abs{T}} > \frac{(1+\gamma)^2}{(1-\gamma)^3} \cdot \frac{\lambda n}{d}.
\]
Then $e(S,T)>0$.
\end{cor}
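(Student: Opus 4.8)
The plan is to derive Corollary~\ref{cor:expander} directly from the lower bound in Corollary~\ref{cor:EML-almost-regular}. Since $S$ and $T$ are disjoint, we have $e(S,T) = e_G(S,T)$ in the ordinary sense, and it suffices to show that the lower bound in \eqref{eq: EML-almost-regular} is strictly positive. Writing $\var = \frac{1+\gamma}{1-\gamma}\lambda\sqrt{|S||T|}$ as in that corollary, we need
\[
\frac{(1-\gamma)^2 d |S||T|}{(1+\gamma)n} > \frac{1+\gamma}{1-\gamma}\,\lambda\sqrt{|S||T|}.
\]

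First I would divide both sides by $\sqrt{|S||T|}$ (which is positive, since otherwise one of $S,T$ is empty and the hypothesis $\sqrt{|S||T|} > \tfrac{(1+\gamma)^2}{(1-\gamma)^3}\cdot\tfrac{\lambda n}{d}$ is violated as the right-hand side is positive). This reduces the desired inequality to
\[
\sqrt{|S||T|} > \frac{(1+\gamma)^2}{(1-\gamma)^3}\cdot\frac{\lambda n}{d},
\]
which is precisely the hypothesis of the corollary. Hence $e(S,T) \ge \frac{(1-\gamma)^2 d|S||T|}{(1+\gamma)n} - \var > 0$, and since $e(S,T)$ is an integer this gives $e(S,T) \ge 1$, as claimed.

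There is essentially no obstacle here: the statement is a one-line rearrangement of Corollary~\ref{cor:EML-almost-regular}, and the only thing to be slightly careful about is the degenerate case where $S$ or $T$ is empty, which is ruled out automatically by the hypothesis. One should also note that $\lambda>0$ and $d>0$ by the standing assumptions in Definition~\ref{def:pseudorandom}, so the right-hand side of the hypothesis is a well-defined positive quantity and the division steps above are legitimate.
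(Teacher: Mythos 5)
Your proof is correct and takes essentially the same route as the paper, whose entire argument is the observation that under the stated hypothesis the lower bound in \eqref{eq: EML-almost-regular} is strictly positive. Your explicit rearrangement (dividing by $\sqrt{\abs{S}\abs{T}}$ and recovering the hypothesis exactly) just spells out that one-line verification.
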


\begin{proof}
Under our assumptions, the lower bound in \eqref{eq: EML-almost-regular} is strictly positive.
\end{proof}

The following statement, which is another simple corollary of the expander mixing lemma, allows us to translate minimum degree conditions into an expansion property for small sets.

\begin{lem}\label{lem:expansion}
Let $\gamma\in[0,1/20]$ be a constant, and let $\lambda\leq d/700$. Let $G$ be an $(n,(1\pm\gamma)d,\lambda)$-graph which contains subsets $S,T\subset V(G)$ such that for every $v\in S$,
$d(v,T)\geq d/6$.
Then, every subset $X\subset S$ of size $|X|\le \frac{4\lambda n}{d}$ satisfies $|N(X, T)|\geq \frac{d}{700\lambda}|X|$.
\end{lem}

\begin{proof}Let $D=\frac{d}{700\lambda}\geq 1$. Suppose that there exists a subset $X\subset S$ of size $1\le |X|\le \frac{4\lambda n}{d}$ such that $|N(X,T)|<D|X|$. Let $Y=N(X,T)$.
Corollary \ref{cor:EML-almost-regular} implies that
\[
\begin{aligned}
\frac{d|X|}{6}\leq e(X,Y)
&\leq \frac{(1+\gamma)^2d|X||Y|}{(1-\gamma)n}+\frac{1+\gamma}{1-\gamma}\cdot\lambda \sqrt{|X||Y|}\\
&\le 5\lambda D|X|+2\lambda \sqrt{D}|X|\\
&\leq 7\lambda D|X|\\
&=\frac{d|X|}{100},
\end{aligned}\]
which is a  contradiction. Therefore, every subset $X\subset S$ of size $|X|\le \frac{4\lambda n}{d}$ satisfies $|N(X,T)|\geq \frac{d}{700\lambda}|X|$. The proof is completed.
\end{proof}

\subsection{Matchings in almost regular expanders}\label{s: matchings}

In this section, we use the expander mixing lemma to get some corollaries for matchings in almost $(n,d,\lambda)$-graphs. For our convenience, we define the bipartite spectral expanders as below.

\begin{defi}\label{def:bipartite-expander}
We say that a bipartite graph $H = (V_1 \cup V_2, E)$ is an \emph{$(n, (1 \pm \gamma)d, \lambda)$-bipartite expander} if $H$ is an induced bipartite subgraph of an $(n, (1 \pm \gamma)d, \lambda)$-graph $G$ with $V(G) = V_1 \cup V_2$, and for each $i = 1, 2$ and every $v \in V_i$, the degree of $v$ in $H$ satisfies $\deg_H(v) = (1 \pm \gamma) \frac{d |V_{3-i}|}{n}$.
\end{defi}

First, we prove the existence of perfect matchings in a bipartite spectral expander with a balanced bipartition. 

\begin{lem}\label{perfect-matching-between-random-subsets}
 Let $\gamma \in[0,1/6]$ be a constant, let $d>0$ and let $\lambda\leq d/200$. 
Let $G=(V,E)$ be an $(n,(1\pm \gamma)d,\lambda)$-bipartite expander with parts $V=V_1\cup V_2$ such that $|V_1|=|V_2|$. Then $G$ contains a perfect matching.
\end{lem}

\begin{proof} 

It is enough to verify the following condition which is equivalent to Hall's condition (see Theorem 3.1.11 in \cite{west2001introduction}): for all $i\in [2]$ and $S\subseteq V_i$ of size $|S|\leq |V_i|/2$, we have $|N(S)|\geq |S|$. 

Suppose to the contrary that there exists $i\in[2]$ and an $S\subseteq V_i$, such that the set $T\coloneqq N(S)$ is of size less than $|S|$. Since $G$ is an $(n,(1\pm \gamma)d,\lambda)$-bipartite expander and since $|V_1|=|V_2|=n/2$, we have that
\[
e(S,T)\geq \frac{(1-\gamma)d}{2}\cdot |S|.
\]

On the other hand, using  the assumption that $\gamma\leq1/6$ and the expander mixing lemma for almost regular expanders (Corollary \ref{cor:EML-almost-regular}), we obtain that 
 \[
 e(S,T)\leq \frac{(1+\gamma)^2d|S||T|}{(1-\gamma)n}+\frac{1+\gamma}{1-\gamma}\cdot\lambda\sqrt{|S||T|}\leq \frac{49d|S|}{120}+\frac{7d|S|}{1000}<\frac{(1-\gamma)d}{2}\cdot |S|,
 \]
 where we also used $|T|<|S|\le n/4$ and $\lambda \le d/200$.

Combining these two estimates we obtain a contradiction. This completes the proof.
\end{proof}

If finding a perfect matching is not necessary, then following from Corollary \ref{cor:expander}, we can use a greedy algorithm to find a matching that avoids a not-too-large subset in each part of a bipartite expander.

% \blue{DM: Also need to modify the lemma because we are going to remove a part in $V_1$ as well.}

\begin{lem}\label{lem:small-matching}
Let $G$ be an $(n,(1\pm\gamma)d,\lambda)$-graph, and let $V(G)=V_1\cup V_2$ be a partition. For each $i=1,2$, let $S_i\subseteq V_i$ be a subset of size $0\leq k_i\leq |V_i|-\frac{(1+\gamma)^2}{(1-\gamma)^3} \cdot \frac{\lambda n}{d}$. Then there exists a matching of size 
\[
\min\left\{|V_1|-k_1-\frac{(1+\gamma)^2}{(1-\gamma)^3} \cdot \frac{\lambda n}{d},|V_2|-k_2-\frac{(1+\gamma)^2}{(1-\gamma)^3} \cdot \frac{\lambda n}{d}\right\}
\]
in $G$ between $V_1\setminus S_1$ and $V_2\setminus S_2$.
\end{lem}

\begin{proof}
We find the matching between $V_1\setminus S_1$ and $V_2\setminus S_2$ greedily. Initially, let $M\coloneqq \emptyset$, and let $U_1\coloneqq V_1\setminus S_1$ and $U_2\coloneqq V_2\setminus S_2$. If $|U_1|\leq \frac{(1+\gamma)^2}{(1-\gamma)^3} \cdot \frac{\lambda n}{d}$ or $|U_2|\leq \frac{(1+\gamma)^2}{(1-\gamma)^3} \cdot \frac{\lambda n}{d}$, then we stop. Otherwise, by Corollary \ref{cor:expander}, there is an edge $e\in E(G)$ between $U_1$ and $U_2$. Let $M\coloneqq M\cup\{e\}$, and let $U_1\coloneqq U_1\setminus V(e)$ and $U_2\coloneqq U_2\setminus V(e)$. Continuing in this fashion, we obtain a matching $M$ of size $\min\left\{|V_1|-k_1-\frac{(1+\gamma)^2}{(1-\gamma)^3} \cdot \frac{\lambda n}{d},|V_2|-k_2-\frac{(1+\gamma)^2}{(1-\gamma)^3} \cdot \frac{\lambda n}{d}\right\}$ in $G$ between $V_1\setminus S_1$ and $V_2\setminus S_2$. The proof is completed.
\end{proof}

\section{Extendability}\label{s: extendability}

In \cite{hyde2023spanning}, the classical tree embedding technique, introduced by Friedman and Pippenger in \cite{friedman1987expanding}, was used to prove a useful result on connecting designated pairs of vertices in expander graphs. The result shows that given two ``nice'' disjoint small subsets of an $(n,d,\lambda)$-graph, one can find a small subgraph, disjoint from these subsets, such that for any designated ordering of vertex pairs from the subsets, there exists a path factor in the subgraph that connects the pairs.
%\red{AF: why are you defining I(S) here and not before the lemma that you actually use it?}

\begin{defi}\label{def:dmextendable}Let $D,m\in\mathbb N$ with $D\ge 3$.
Let $G$ be a graph and let $S\subset G$ be a subgraph with $\Delta(S) \leq D$.
We say that $S$ is \emph{$(D,m)$-extendable}
if for all $U\subset V(G)$ with $1\le |U|\le 2m$ we have
    \[
        |(N_G(U)\cup U)\setminus V(S)|\ge (D-1)|U|-\sum_{u\in U\cap V(S)}(d_S(u)-1).
    \]
\end{defi}
The following result says that it is enough to control the external neighbourhood of small sets in order to verify extendability.

\begin{prop}\label{prop:weakerextendability}\cite{hyde2023spanning} Let $D,m\in\mathbb N$ with $D\ge 3$.
Let $G$ be a graph and let $S\subset G$ be a subgraph with $\Delta(S)\le D$.
If for all $U\subset V(G)$ with $1\leq|U|\leq 2m$ we have
\[|N_G(U)\setminus V(S)|\geq D|U|,\]
then $S$ is $(D,m)$-extendable in $G$.
\end{prop}

Before stating the lemma, we introduce some necessary definitions. We denote by $I(S)$ the edgeless subgraph with vertex set $S$. A graph $G$ is said to be \emph{$m$-joined} if for any disjoint sets $A, B \subseteq V(G)$ with $|A|, |B| \geq m$, there is at least one edge between $A$ and $B$, i.e., $e(A, B) \geq 1$.

Now we are ready to state the result from \cite{hyde2023spanning}.

\begin{lem}\label{lemma:find_sorting_network_in_expander} There is an absolute constant $C>0$ with the following property. 
Let $n$ be a sufficiently large integer, and let $20C\leq K\leq n/\log^3n$. Let $D,m\in \mathbb{N}$ satisfy $m\leq n/100D$ and $D\ge 100$. Let $G$ be an $m$-joined graph on $n$ vertices which contains disjoint subsets $V_1, V_2\subseteq V(G)$ with $|V_1|=|V_2|\leq n/K\log^{3}n$, and set $\ell:=\lfloor C \log^3 n \rfloor$.
Suppose that $I(V_1\cup V_2)$ is $(D,m)$-extendable in $G$.
\par Then, there exists a $(D,m)$-extendable subgraph $S_{res}\subseteq G$ such that for any bijection $\phi\colon V_1\to V_2$, there exists a $P_\ell$-factor of $S_{res}$ where each copy of $P_\ell$ has as its endpoints some $v\in V_1$ and $\phi(v)\in V_2$.
\end{lem}

\section{Random subgraphs of almost regular expanders}\label{s: random subgraphs}
%==============

In this section, we show that a random induced subgraph of an almost $(n,d,\lambda)$-graph or a bipartite spectral expander is typically a spectral expander by itself. This serves as our main tool in the proof of our main result.  

\subsection{Chernoff's bounds}

We extensively use the following well-known Chernoff's bounds for the upper and lower tails of the hypergeometric distribution throughout the paper. The following lemma was proved by Hoeffding \cite{hoeffding1994probability} (also see Section 23.5 in \cite{frieze2016introduction}).

\begin{lem}[Chernoff's inequality for hypergeometric distribution]\label{lem:hypergeometric}
Let 
$X\sim \mathrm{Hypergeometric}(N,K,n)$
 and let
$\mathbb{E}[X]=\mu$. Then\begin{itemize}
    \item $\Pr{X<(1-a)\mu}<e^{-a^2\mu/2}$ for every $a>0$;
    \item $\Pr{X>(1+a)\mu}<e^{-a^2\mu/3}$ for every $a\in(0,\frac{3}{2})$.
\end{itemize}
\end{lem}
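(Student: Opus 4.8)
The plan is to reduce the two hypergeometric tail bounds to the classical Chernoff bounds for the binomial distribution, via Hoeffding's convexity comparison. Write $p = K/N$, so that $\mu = np$, and let $Y \sim \mathrm{Binomial}(n,p)$ be the number of successes when one samples $n$ items \emph{with} replacement from the same population. The key input is Hoeffding's observation that for every continuous convex function $f\colon\R\to\R$ one has $\E f(X) \le \E f(Y)$; applying this to $f(x) = e^{tx}$ gives
\[
\E e^{tX} \le \E e^{tY} = (1-p+pe^{t})^{n} \le \exp\big(np(e^{t}-1)\big)
\]
for every real $t$, where the last step uses $1+x\le e^{x}$.

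For the upper tail I would apply Markov's inequality to $e^{tX}$ with $t>0$:
\[
\Pr{X > (1+a)\mu} \le e^{-t(1+a)\mu}\,\E e^{tX} \le \exp\big(-t(1+a)\mu + np(e^{t}-1)\big).
\]
Choosing $t=\log(1+a)$ yields the standard estimate $\Pr{X>(1+a)\mu} \le \big(e^{a}(1+a)^{-(1+a)}\big)^{\mu}$, and it then remains to verify the elementary inequality $(1+a)\log(1+a)-a \ge a^{2}/3$ for $a\in(0,\tfrac32)$. This is a routine calculus check: the function $g(a)=(1+a)\log(1+a)-a-a^{2}/3$ satisfies $g(0)=g'(0)=0$ and is convex on $(0,\tfrac12]$ (since $g''(a)=\tfrac1{1+a}-\tfrac23$), and one confirms directly that $g$ stays nonnegative on the remaining interval $[\tfrac12,\tfrac32]$ by tracking the sign of $g'$ and checking the endpoint value $g(\tfrac32)>0$.

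For the lower tail I would argue symmetrically with $t<0$: Markov's inequality gives $\Pr{X<(1-a)\mu} \le \exp\big(-t(1-a)\mu + np(e^{t}-1)\big)$, and taking $t=\log(1-a)$ (legitimate for $a\in(0,1)$; for $a\ge1$ the asserted bound is trivial because $X\ge 0$) produces $\Pr{X<(1-a)\mu} \le \big(e^{-a}(1-a)^{-(1-a)}\big)^{\mu}$. The proof then closes with the inequality $(1-a)\log(1-a)+a \ge a^{2}/2$ on $(0,1)$, once more an elementary one-variable exercise.

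The main obstacle — indeed the only genuinely nontrivial ingredient — is Hoeffding's reduction $\E f(X)\le\E f(Y)$ for convex $f$, which controls sampling without replacement by sampling with replacement at the level of the moment generating function. One clean way to establish it is to realize the hypergeometric count, after a suitable coupling, as a conditional expectation of the binomial count and invoke Jensen's inequality; alternatively one follows Hoeffding's original argument, writing $X$ as a sum of exchangeable indicators and using that averaging a convex function over a symmetric group of permutations can only decrease its value. Since the statement is classical and due to Hoeffding \cite{hoeffding1994probability}, in the paper we simply cite it rather than reproducing this step.
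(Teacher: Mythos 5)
Your proof is correct, but note that the paper does not actually prove this lemma: it is quoted as a known result, with a pointer to Hoeffding \cite{hoeffding1994probability} and to Section 23.5 of \cite{frieze2016introduction}. What you have written is essentially the standard argument behind that citation. Hoeffding's convex-ordering comparison $\E f(X)\le \E f(Y)$ between sampling without and with replacement reduces the hypergeometric moment generating function to the binomial one, after which the usual Chernoff computation with $t=\log(1+a)$ (resp.\ $t=\log(1-a)$) together with the elementary inequalities $(1+a)\log(1+a)-a\ge a^{2}/3$ on $(0,\tfrac32)$ and $(1-a)\log(1-a)+a\ge a^{2}/2$ on $(0,1)$ yields exactly the two stated tails. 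Your calculus checks are right: the first inequality does hold on all of $(0,\tfrac32)$ (it only fails near $a\approx 1.8$), the second follows from $-\log(1-a)\ge a$, and the degenerate cases ($a\ge 1$ in the lower tail, or $\mu=0$) are indeed trivial since $X\ge 0$. The one caveat is that the single genuinely nontrivial ingredient, Hoeffding's comparison theorem, is itself only cited rather than proved in your write-up; since the paper cites the entire lemma anyway, this matches its level of detail, but a fully self-contained proof would still need that step (e.g.\ via the conditional-expectation/Jensen coupling you sketch, or Hoeffding's original exchangeability argument).
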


\subsection{Random induced subgraphs}

The following theorem is the main result of this section. It asserts that with probability at least $1-n^{-\Theta(1)}$, random (induced) subgraphs of spectral expanders are also spectral expanders. 

\begin{thm}[Random subgraphs of spectral expanders] \label{thm:subgraphs-of-pseudorandom-graphs}
Let $\gamma\in (0,1/200]$ be a constant. There exists an absolute constant $C$ such that the following holds for sufficiently large $n$. 
    Let $d,\lambda>0$, let $\sigma\in[1/n,1)$, and let $G$ be an $(n,(1\pm\gamma)d,\lambda)$-graph. 
    Let $X\subseteq V(G)$ with $|X|=\sigma n$ be a subset chosen uniformly at random, and let $H \coloneqq G[X]$ be the subgraph of $G$ induced by $X$. 
    Assume that
    \[
    \sigma d\ge C \gamma^{-2} \log n 
    \quad \text{and} \quad
    \sigma\lambda \ge C\sqrt{\sigma d\log n}.
    \]
    Then with probability at least $1-n^{-1/6}$, $H$ is a $\left(\sigma n, (1\pm2\gamma )\sigma d, 6\sigma\lambda \right)$-graph.
\end{thm}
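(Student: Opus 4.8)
The statement has three parts to verify for $H = G[X]$: (i) the degree condition $\deg_H(v) = (1\pm 2\gamma)\sigma d$ for all $v \in X$; (ii) the number of vertices is $\sigma n$ (this is immediate by choice of $X$); (iii) the spectral bound $s_2(A(H)) \le 6\sigma\lambda$. The plan is to handle the degrees by a union bound over Chernoff estimates, and the spectral bound by sandwiching $A(H)$ between a ``nice'' matrix whose second singular value we can control and a small-norm error term, using a principal-submatrix sampling theorem of Rudelson--Vershynin type as advertised in the introduction.

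\textbf{Step 1: Degrees.} Fix $v \in V(G)$ and condition on $v \in X$. Then $\deg_H(v) = |N_G(v) \cap X|$, and given $v \in X$, the set $X \setminus \{v\}$ is a uniformly random $(\sigma n - 1)$-subset of $V(G) \setminus \{v\}$, so $\deg_H(v)$ has a hypergeometric distribution with mean $(1 \pm o(1)) \sigma \deg_G(v) = (1\pm o(1))(1\pm\gamma)\sigma d$. Apply Lemma~\ref{lem:hypergeometric} with deviation parameter $a \asymp \gamma$: since $\sigma d \ge C\gamma^{-2}\log n$, the mean is $\ge C' \gamma^{-2} \log n$, so each tail is at most $e^{-\Omega(\gamma^2 \cdot \gamma^{-2}\log n)} = n^{-\Omega(C)}$, which for $C$ large beats $n^{-1}$ by a wide margin. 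A union bound over the $\le n$ vertices gives that all degrees are $(1\pm 2\gamma)\sigma d$ with probability $\ge 1 - n^{-\Omega(C)}$ (comfortably more than $1 - n^{-1/13}$); here one absorbs the $(1\pm o(1))$ slack and the gap between $\gamma$ and $2\gamma$ into the constant.

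\textbf{Step 2: Spectral bound — the main obstacle.} This is where the Rudelson--Vershynin sampling result enters, and it is the heart of the argument. Write $A = A(G)$ and let $\bar A = \frac1{d}\bigl(1\pm O(\gamma)\bigr) A$ be (close to) the normalized matrix, with $s_2(\bar A) \le \lambda/((1-\gamma)d)$ by Corollary~\ref{cor: singular values normalized}. The subgraph $H = G[X]$ has adjacency matrix $A_X$, the principal submatrix of $A$ on rows and columns indexed by $X$. The idea is to compare $A_X$ to $\sigma A$ ``restricted'' appropriately: informally, a uniformly random principal $\sigma n \times \sigma n$ submatrix of a matrix $M$ with small $s_2$ should, after rescaling by $1/\sigma$, still have small $s_2$, with high probability, provided the row norms of $M$ are not too spiky — which is exactly guaranteed here by $d$-regularity (all rows of $A$ have $\ell_2$-norm $\sqrt{d}$). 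Concretely I would: (a) split off the rank-one ``Perron'' part, writing $A = \frac{d}{n}\mathbbm{1}\mathbbm{1}^\tran + E$ where $\|E\| = s_2(A) \le \lambda$; (b) observe $A_X = \frac{d}{n}\mathbbm{1}_X\mathbbm{1}_X^\tran + E_X$ where $E_X$ is the corresponding principal submatrix of $E$; (c) the first term has operator norm $\frac{d}{n}|X| = \sigma d$ and after subtracting a suitable rank-one correction (to realign with the true Perron vector of $H$, whose degrees are $(1\pm 2\gamma)\sigma d$ by Step~1) contributes only $O(\gamma \sigma d)$ to $s_2(A_X)$ — here one uses Corollary~\ref{cor:EML-almost-regular}/Weyl's inequality to argue the rank-one part of $A_X$ is within $O(\gamma\sigma d)$, in operator norm, of the rank-one Perron part of the actual $\bar A(H)$; (d) for the remaining term, invoke the sampling theorem: a random principal submatrix $E_X$ of the symmetric matrix $E$ satisfies $\|E_X\| \le O\bigl(\sigma \|E\| + \sqrt{\sigma \log n}\max_i \|E_{i,\cdot}\|_2\bigr)$ with probability $\ge 1 - n^{-\Theta(1)}$. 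Since $\|E\| \le \lambda$ and $\|E_{i,\cdot}\|_2 \le \|A_{i,\cdot}\|_2 + \frac{d}{n}\|\mathbbm{1}\|\cdot\frac{|\text{stuff}|}{\dots} = O(\sqrt d)$ (the $i$th row of $E$ is the $i$th row of $A$ minus $\frac{d}{n}\mathbbm{1}$, which has norm $\le \sqrt{d} + d/\sqrt n \le 2\sqrt d$), this gives $\|E_X\| \le O(\sigma\lambda) + O(\sqrt{\sigma d \log n})$. The hypothesis $\sigma\lambda \ge C\sqrt{\sigma d \log n}$ is precisely what makes the second term absorbable into the first: $\|E_X\| \le O(\sigma\lambda)$. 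Combining (c) and (d) via the triangle inequality (and Weyl) yields $s_2(A_X) \le O(\gamma\sigma d) + O(\sigma\lambda)$; and since the hypothesis also forces $\sigma\lambda \gtrsim \sqrt{\sigma d\log n}$, in particular $\lambda \gtrsim \gamma d$ whenever $\gamma$ is a constant and... — more carefully, one notes $\gamma \sigma d \le \gamma \cdot \frac{1}{C'}\sigma\lambda \cdot \sqrt{\sigma d/\log n}$ is \emph{not} automatically small, so instead I would track constants directly and verify that with the numerical value $\gamma \le 1/200$ the two contributions sum to at most $6\sigma\lambda$.

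\textbf{Step 3: Assembling.} Take the intersection of the good event in Step~1 and the good event in Step~2; each has failure probability $n^{-\Theta(1)}$, and one checks the exponents so that the union bound gives failure probability at most $n^{-1/13}$ (the $1/13$ is presumably the bottleneck exponent coming from the sampling theorem's concentration rate for the chosen range of $\sigma$; since $\sigma \ge 1/n$, some care is needed when $\sigma$ is tiny, but then the hypothesis $\sigma d \ge C\gamma^{-2}\log n$ still forces $d$ large enough for all estimates to go through). On this event $H$ is a $\bigl(\sigma n, (1\pm 2\gamma)\sigma d, 6\sigma\lambda\bigr)$-graph, as claimed. The step I expect to be genuinely delicate is Step~2(d): pinning down exactly which principal-submatrix norm bound is being quoted, verifying its hypotheses (symmetry of $E$, control of row norms, the range of $\sigma$ down to $1/n$), and chasing the constants so that the final bound is the clean $6\sigma\lambda$ rather than some unspecified $O(\sigma\lambda)$; everything else is routine.
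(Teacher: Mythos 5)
There is a genuine gap, and it sits exactly where you flagged discomfort: Step 2(a). For an \emph{almost} regular graph the identity $\norm{A-\tfrac{d}{n}\mathbbm{1}\mathbbm{1}^\tran}=s_2(A)$ is false; it holds only when $\mathbbm{1}$ is the exact Perron vector, i.e.\ for exactly $d$-regular graphs. If all degrees equal $(1+\gamma)d$ (a legitimate $(n,(1\pm\gamma)d,\lambda)$-graph), then $\norm{E}\ge\frac{1}{n}\mathbbm{1}^\tran E\,\mathbbm{1}=\gamma d$, and in general one only gets $\norm{E}\le\lambda+\Theta(\gamma d)$. This matters because the theorem's hypotheses give only \emph{lower} bounds on $\lambda$: with $\gamma$ a fixed constant, $d=\log^{10}n$ and $\lambda\asymp\sqrt{\sigma d\log n/\sigma^2}=\sqrt{d\log n/\sigma}$ (the smallest allowed value), one has $\gamma d\gg\lambda$, indeed $\gamma d/\lambda\to\infty$. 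Feeding $\norm{E}$ into the sampling bound then produces $\norm{E_X}\lesssim\sigma\lambda+\gamma\sigma d+\sqrt{\sigma d\log n}$, and the same parasitic $O(\gamma\sigma d)$ term appears in your step (c) ``realignment''. So the fallback you propose at the end of Step 2 --- tracking constants and using $\gamma\le 1/200$ --- cannot work: the ratio $\gamma\sigma d/(\sigma\lambda)$ is unbounded under the stated hypotheses, so no constant chasing turns $O(\sigma\lambda+\gamma\sigma d)$ into $6\sigma\lambda$. (Note also that the first hypothesis $\sigma d\ge C\gamma^{-2}\log n$ goes in the \emph{wrong} direction for you: it forces $\gamma d\gtrsim\sqrt{d\log n/\sigma}$, i.e.\ $\gamma d$ is at least as large as the minimal $\lambda$, never negligibly smaller.)

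The paper circumvents exactly this problem by never centering at $\tfrac{d}{n}\mathbbm{1}\mathbbm{1}^\tran$. It works with the normalized matrix $\bar{A}_G=D^{-1/2}A_GD^{-1/2}$, for which (Observation~\ref{obs: |N|}) the top singular value is exactly $1$ with explicitly known singular vectors proportional to $D^{1/2}\mathbbm{1}$; hence by Lemma~\ref{lem:low-rank-approximation} the rank-one subtraction is \emph{exact}, $\norm{B}=s_2(\bar{A}_G)\le 1.1\lambda/d$, with no additive $\gamma d$ error. The sampling machinery (your step (d), made precise via decoupling and the fixed-size-subset comparison, Lemmas~\ref{decoupling} and~\ref{random models}, which your sketch also leaves implicit) is applied to this $B$; the degree irregularity re-enters only through conjugation by $D^{1/2}$, costing a harmless multiplicative factor $\norm{D}\le 1.1d$, after which the rank-one structure is peeled off again and interlacing gives $s_2(A_H)$. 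If you want to stay with the unnormalized matrix, the fix is to subtract the \emph{true} top singular direction $s_1(A)\vu_1\vu_1^\tran$, so that $\norm{E}=s_2(A)\le\lambda$ exactly; then $s_2(A_H)\le\norm{P_XEP_X}$ automatically (no realignment needed), but you must additionally verify delocalization-type bounds such as $s_1|\vu_1(i)|\le\sqrt{(1+\gamma)d}$ to keep $\norm{E}_{1\to2}=O(\sqrt d)$ and $\norm{E}_\infty=O(1)$ --- an argument your proposal does not supply. Step 1 (degrees) and the final probability bookkeeping are fine and match the paper.
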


Let us briefly discuss the two conditions in Theorem~\ref{thm:subgraphs-of-pseudorandom-graphs}. The first condition permits the random subgraph to be quite sparse—with degrees on the order of $\log n$—but not sparser than that. Below this threshold, the degrees of the random subgraph become unstable, and it will no longer be approximately regular. The second condition is essentially the Alon-Boppana bound, up to a logarithmic factor, which dictates that the second singular value of an approximately $\sigma d$-regular graph must be at least $\Omega(\sqrt{\sigma d})$. In other words, the conditions in Theorem~\ref{thm:subgraphs-of-pseudorandom-graphs} are nearly necessary for a subgraph $H$ to be an almost regular expander. Additionally, we did not optimize the constant factor in $s_2(H)$, though we believe it should be $1 + o(1)$.

\bigskip

The proof of Theorem~\ref{thm:subgraphs-of-pseudorandom-graphs} is based on bounds of the spectral norm of a \emph{random submatrix}, which is obtained from a given $n \times n$ matrix $B$ by choosing a uniformly random subset of rows and a uniformly random subset of columns of $B$. 

There are two natural ways to choose a random subset of the set $[n]$. We can make a random subset $I$ by selecting every element of $[n]$ independently at random with probability $\sigma \in (0,1)$. In this case, we write 
\[
I \sim \SSubset(n,\sigma).
\]
Alternatively, we can choose any $m$-set $J$ of $[n]$ with the same probability $1/\binom{n}{m}$. In this case, we write 
\[
J \sim \SSubset(n,m).
\]

Note that if $m = \sigma n$, the models $\SSubset(n,\sigma)$ and $\SSubset(n,m)$ are closely related but not identical. It should be clear from the context which one we consider. 

For a given subset $I \subset [n]$, we denote by $P_I$ the orthogonal projection in $\R^n$ onto $\R^I$. In other words, $P_I$ is the diagonal matrix with $P_{ii}=1$ if $i \in I$ and $P_{ii}=0$ if $i \not\in I$.

The main tool of this section is the following bound. It is worth mentioning that several similar results have been proved before, for example, in \cite{rudelson2007sampling} and \cite{tropp2008norms}.

\begin{thm}[Norms of random submatrices] \label{thm: random submatrices}
    Let $B$ be an $n\times n$ matrix. Let $I,I' \sim \SSubset(n,\sigma)$ be two independent subsets, where $\sigma\in(0,1)$. Let $p\geq 2$ and let $q=\max\{p,2\log n\}$. Then
    \[
    \mathbb{E}_p\norm{P_I B P_{I'}} 
    \le \sigma\norm{B} + 3\sqrt{q\sigma}\left( \norm{B}_{1\rightarrow 2} + \norm[1]{B^\tran}_{1\rightarrow 2} \right) + 8q\norm{B}_\infty.
    \]
    Here $\mathbb{E}_p[X]=(\mathbb{E}|X|^p)^{1/p}$ is the $L_p$ norm of the random variable $X$; the norm $\|{\mkern 2mu\cdot\mkern 2mu}\|_{1\to2}$ denotes the norm of a matrix as an $\ell_1 \to \ell_2$ linear operator, which  equals to the maximum value among the $\ell_2$ norm of each column; 
    and $\norm{\cdot}_\infty$ denotes the maximum absolute entry of a matrix.
\end{thm}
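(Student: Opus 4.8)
The result is of the type established by Rudelson--Vershynin~\cite{rudelson2007sampling} and Tropp~\cite{tropp2008norms}, and the plan is to follow that template: reduce the two-sided subsampling $P_IBP_{I'}$ to two successive applications of a one-sided estimate for $P_IM$ with $M$ fixed, together with a separate tail bound for the largest row $\ell_2$-norm of a column-subsampled matrix. \emph{Step 1 (one-sided bound).} I would first prove that for every fixed matrix $M$ with $n$ rows, every $p\ge 2$, and $I\sim\SSubset(n,\sigma)$,
\[
\mathbb{E}_p\norm{P_IM}\ \le\ \sqrt{\sigma}\,\norm{M}\ +\ C_1\sqrt{q}\,\max_i\norm{R_i},
\]
where $R_i$ denotes the $i$th row of $M$ viewed as a column vector and $q=\max\{p,2\log n\}$. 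Writing $P_I=\sum_i\delta_ie_ie_i^\tran$ with $\delta_i$ i.i.d.\ $\mathrm{Bernoulli}(\sigma)$, one has the exact identity $\norm{P_IM}^2=\norm{\sum_i\delta_iR_iR_i^\tran}$, and the matrix on the right is a sum of independent positive semidefinite matrices with mean $\sigma M^\tran M$ (of norm $\sigma\norm{M}^2$), each of norm at most $L:=\max_i\norm{R_i}^2$ and with matrix variance at most $\sigma L\norm{M}^2$. Applying the $L_{p/2}$-version of the matrix Bernstein inequality (Appendix), with the mean peeled off by the triangle inequality so that its coefficient stays exactly $1$, and using $\log n+p/2\le q$ to absorb the dimension and moment penalties, gives $\mathbb{E}_{p/2}\norm{\sum_i\delta_iR_iR_i^\tran}\le\sigma\norm{M}^2+C\bigl(\sqrt{\sigma Lq}\,\norm{M}+Lq\bigr)$. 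Since $\mathbb{E}_p\norm{P_IM}=\bigl(\mathbb{E}_{p/2}\norm{\sum_i\delta_iR_iR_i^\tran}\bigr)^{1/2}$, putting $a=\sqrt\sigma\,\norm{M}$ and $b=\sqrt{Lq}$ and invoking the elementary inequality $a^2+Cab+Cb^2\le\bigl(a+\max\{\sqrt C,C/2\}\,b\bigr)^2$ yields Step 1 with $C_1=\max\{\sqrt C,C/2\}$.

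\emph{Step 2 (row norms after column subsampling).} Next I would show that for fixed $B$ and $I'\sim\SSubset(n,\sigma)$,
\[
\mathbb{E}_p\Bigl[\max_i\norm{R_i(B)|_{I'}}\Bigr]\ \le\ \sqrt{\sigma}\,\norm[1]{B^\tran}_{1\rightarrow 2}\ +\ C_2\sqrt{q}\,\norm{B}_\infty ,
\]
where $R_i(B)|_{I'}$ is the $i$th row of $B$ restricted to the coordinates in $I'$. Indeed $\norm{R_i(B)|_{I'}}^2=\sum_j\epsilon_jB_{ij}^2$ is a sum of independent variables bounded by $\norm{B}_\infty^2$ with mean $\sigma\norm{R_i(B)}^2\le\sigma\norm[1]{B^\tran}_{1\rightarrow 2}^2$; the scalar Bernstein inequality together with a union bound over the $n$ rows (the $\log n$ cost again absorbed into $q$) controls the maximum, and the same quadratic-completion step as in Step 1 turns the resulting bound on $\mathbb{E}_{p/2}\bigl[\max_i\sum_j\epsilon_jB_{ij}^2\bigr]$ into the stated bound.

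\emph{Step 3 (assembly).} Conditioning on $I'$ and applying Step 1 with $M=BP_{I'}$ — so that $\norm{M}=\norm{BP_{I'}}=\norm{P_{I'}B^\tran}$ and $\max_i\norm{R_i(M)}=\max_i\norm{R_i(B)|_{I'}}$ — then taking $L_p$-norms in $I'$ (using $\mathbb{E}_{I,I'}^{(p)}\norm{\cdot}=\mathbb{E}_{I'}^{(p)}\mathbb{E}_I^{(p)}\norm{\cdot}$ and Minkowski's inequality), then applying Step 1 once more, now to the matrix $B^\tran$, to bound $\mathbb{E}_{I'}^{(p)}\norm{P_{I'}B^\tran}\le\sqrt\sigma\,\norm{B}+C_1\sqrt q\,\norm{B}_{1\rightarrow 2}$, and applying Step 2 to $\mathbb{E}_{I'}^{(p)}\max_i\norm{R_i(B)|_{I'}}$, and collecting terms, gives
\[
\mathbb{E}_p\norm{P_IBP_{I'}}\ \le\ \sigma\norm{B}\ +\ C_1\sqrt{q\sigma}\bigl(\norm{B}_{1\rightarrow 2}+\norm[1]{B^\tran}_{1\rightarrow 2}\bigr)\ +\ C_1C_2\,q\,\norm{B}_\infty .
\]
The explicit constants $3$ and $8$ in the statement then reduce to verifying $C_1\le 3$ and $C_1C_2\le 8$, i.e.\ to tracking the constant $C$ in the matrix Bernstein inequality.

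\emph{Main obstacle.} The delicate point is the leading term: to arrive at the coefficient exactly $\sigma$ on $\norm{B}$ (rather than $C_1^2\sigma$), the mean term must be split off with coefficient exactly $1$ in every use of the concentration bound and the cross terms recombined by completing the square; carrying this bookkeeping consistently through the two nested applications of Step 1, and using the $L_p$ (moment) form of matrix Bernstein rather than a mere tail bound, is the technical heart of the argument. The remaining ingredients — the identity $\norm{P_IM}^2=\norm{\sum_i\delta_iR_iR_i^\tran}$, the union bound over the rows, and the estimate $\log n+p/2\le q$ — are routine.
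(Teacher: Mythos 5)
Your proposal is correct in outline and reaches the stated bound by the same two-step architecture as the paper -- apply a one-sided subsampling estimate twice and control the subsampled $\ell_1\to\ell_2$ norm separately -- but it replaces the paper's two imported ingredients by self-contained arguments. The paper simply quotes the Rudelson--Vershynin sampling lemma (Lemma \ref{lem:RV}), $\mathbb{E}_p\norm{AP_I}\le\sqrt{\sigma}\norm{A}+3\sqrt{q}\,\mathbb{E}_p\norm{AP_I}_{1\to2}$, applies it twice exactly as in Tropp, and then invokes Tropp's bound $\mathbb{E}_p\norm{P_IB}_{1\to2}\le\sqrt{\sigma}\norm{B}_{1\to2}+2^{1.25}\sqrt{q}\norm{B}_\infty$; the constant $8$ is just $3\cdot 2^{1.25}\le 8$. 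You instead prove the one-sided estimate from scratch via the identity $\norm{P_IM}^2=\bigl\|\sum_i\delta_iR_iR_i^\tran\bigr\|$ and an $L_{p/2}$ moment form of matrix Bernstein, and you prove the subsampled row-norm estimate via scalar Bernstein plus a union bound (with the $\log n$ absorbed into $q$, which is the correct maneuver); your assembly -- condition on $I'$, apply the one-sided bound to $M=BP_{I'}$, take iterated $L_p$ norms with Minkowski, then apply it again to $B^\tran$ -- is exactly the paper's and is carried out correctly. What your route buys is independence from the RV machinery (noncommutative Khintchine); what it costs is that the explicit constants are no longer free: the paper's $3$ comes straight from Lemma \ref{lem:RV}, whereas your $C_1=\max\{\sqrt{C},C/2\}$ and $C_1C_2$ depend on the constants in a moment-form matrix Bernstein/Rosenthal inequality that you cite only generically. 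With standard versions of that inequality your argument certainly yields the theorem with some absolute constants, which is all the downstream applications need, but the verification $C_1\le 3$ and $C_1C_2\le 8$ -- i.e.\ the literal constants in the statement -- is the one step you have genuinely left open; either track it against a specific quotable moment bound or fall back on the cited Rudelson--Vershynin and Tropp lemmas as the paper does.
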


We use the following results to derive Theorem \ref{thm: random submatrices}.

\begin{lem}[Rudelson-Vershynin \cite{rudelson2007sampling}]       
    \label{lem:RV}
    Let $A$ be an $m\times n$ matrix with rank $r$.
    Let $I \sim \SSubset(n,\sigma)$, where $\sigma \in (0,1)$.
    Let $p \ge 2$ and let $q=\max\{p,2\log r\}$. Then
    \[\mathbb{E}_p\norm{AP_I} \leq \sqrt{\sigma}\norm{A} + 3\sqrt{q}\mathbb{E}_p\norm{AP_I}_{1\rightarrow 2}.\] 
\end{lem}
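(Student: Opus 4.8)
The plan is to reduce the bound on $\mathbb{E}_p\norm{AP_I}$ to an estimate on the operator norm of the random positive semidefinite matrix $\sum_{i\in I} x_i x_i^\tran$, where $x_i \coloneqq Ae_i$ denotes the $i$-th column of $A$, and then to combine a symmetrization step with Rudelson's inequality for matrix-valued Rademacher chaos. Since $P_I$ is an orthogonal projection, $\norm{AP_I}^2 = \norm{(AP_I)(AP_I)^\tran} = \norm{AP_I A^\tran} = \norm{\sum_{i\in I} x_i x_i^\tran}$. Writing $\delta_i \coloneqq \mathbbm{1}_{\{i\in I\}}$ for the independent $\mathrm{Bernoulli}(\sigma)$ selectors and splitting off the mean, $\sum_{i\in I} x_i x_i^\tran = \sigma AA^\tran + \Xi$ with $\Xi \coloneqq \sum_{i=1}^n (\delta_i-\sigma) x_i x_i^\tran$. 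By the triangle inequality for matrices and Minkowski's inequality for the $L_{p/2}$-norm (valid since $p/2 \ge 1$),
\[
\left( \mathbb{E}_p\norm{AP_I} \right)^2 = \left( \mathbb{E}\,\norm{\sigma AA^\tran + \Xi}^{p/2} \right)^{2/p} \le \sigma\norm{A}^2 + \left( \mathbb{E}\norm{\Xi}^{p/2} \right)^{2/p},
\]
so everything reduces to bounding $\bigl(\mathbb{E}\norm{\Xi}^{p/2}\bigr)^{2/p}$.

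For this I would first symmetrize: comparing $I$ with an independent copy and applying Kahane's contraction principle gives $\bigl(\mathbb{E}\norm{\Xi}^{p/2}\bigr)^{2/p} \le 2\bigl(\mathbb{E}\norm{\sum_{i\in I}\epsilon_i x_i x_i^\tran}^{p/2}\bigr)^{2/p}$ with independent random signs $\epsilon_i$. Conditionally on $I$, every $x_i$ lies in the column space of $A$, a subspace of dimension $r = \rank(A)$, so Rudelson's inequality applies: for any vectors $\{y_j\}$ in an $r$-dimensional space and any $s \ge 1$,
\[
\left( \mathbb{E} \norm{\sum_j \epsilon_j y_j y_j^\tran}^s \right)^{1/s} \le C \sqrt{\max\{s, 2\log r\}} \cdot \Big( \max_j \norm{y_j} \Big) \cdot \norm{\sum_j y_j y_j^\tran}^{1/2},
\]
a standard consequence of the non-commutative Khintchine inequality of Lust-Piquard and Pisier, the $\sqrt{\log r}$ arising from bounding the operator norm by the Schatten $2m$-norm at a cost of $r^{1/2m}$ and optimizing in $m$. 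Applying this with $\{y_j\} = \{x_i : i \in I\}$ and $s = p/2 \le q \coloneqq \max\{p, 2\log r\}$, and using $\max_{i\in I}\norm{x_i} = \norm{AP_I}_{1\to 2}$ together with $\norm{\sum_{i\in I}x_ix_i^\tran}^{1/2} = \norm{AP_I}$, then taking expectation over $I$ and splitting the product of the two random quantities by Cauchy-Schwarz (which raises the exponent from $p/2$ to $p$), we obtain
\[
\left( \mathbb{E}\norm{\Xi}^{p/2} \right)^{2/p} \le 2C\sqrt{q} \cdot \mathbb{E}_p\norm{AP_I}_{1\to 2} \cdot \mathbb{E}_p\norm{AP_I}.
\]

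Writing $L \coloneqq \mathbb{E}_p\norm{AP_I}$ and $M \coloneqq \mathbb{E}_p\norm{AP_I}_{1\to 2}$, the last two displays yield the quadratic inequality $L^2 \le \sigma\norm{A}^2 + 2C\sqrt{q}\, M L$, which rearranges to $L \le \sqrt{\sigma}\,\norm{A} + 2C\sqrt{q}\, M$; with the sharp constant in Rudelson's inequality one may take $2C \le 3$, giving $\mathbb{E}_p\norm{AP_I} \le \sqrt{\sigma}\,\norm{A} + 3\sqrt{q}\;\mathbb{E}_p\norm{AP_I}_{1\to 2}$, which is the assertion. (As indicated in the statement, this is the result of \cite{rudelson2007sampling}, so one could alternatively cite it directly.)

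The main obstacle is Rudelson's inequality itself: establishing a matrix Rademacher chaos bound whose dimensional dependence is only logarithmic in the \emph{rank} $r$ of $A$ rather than in the ambient dimension $n$ — this is precisely what makes the estimate useful for low-rank $A$ — and tracking the constants carefully enough to reach the value $3$. The remaining points are routine: justifying the symmetrization in the Bernoulli-selector model via the contraction principle, and checking that the Cauchy-Schwarz step costs nothing, because $p/2 \le q$ and the extra factor is absorbed into the constant.
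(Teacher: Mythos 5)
The paper never proves this lemma: it is quoted verbatim as an external result of Rudelson and Vershynin \cite{rudelson2007sampling} and used as a black box (applied twice, together with Tropp's bound on $\mathbb{E}_p\norm{P_IB}_{1\to 2}$, to prove Theorem \ref{thm: random submatrices}). Your proposal instead reconstructs the original argument, and the reconstruction is essentially sound: the identity $\norm{AP_I}^2=\norm[1]{\sum_{i\in I}x_ix_i^\tran}$, the split into $\sigma AA^\tran$ plus the centered part $\Xi$, symmetrization, Rudelson's inequality (noncommutative Khintchine) applied conditionally on $I$ with the logarithm of the \emph{rank} $r$ rather than the ambient dimension (legitimate, since all the rank-one matrices act on the $r$-dimensional column space of $A$), Cauchy--Schwarz to separate $\mathbb{E}_p\norm{AP_I}_{1\to2}$ from $\mathbb{E}_p\norm{AP_I}$, and finally the quadratic bootstrap $L^2\le \sigma\norm{A}^2+2C\sqrt{q}\,ML$ giving $L\le\sqrt{\sigma}\norm{A}+2C\sqrt{q}\,M$. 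This is exactly the route of the source paper, so the comparison here is not "different proof versus paper's proof" but "reconstructed proof versus citation": your version makes the argument self-contained, while the paper's choice keeps the exposition short and inherits the stated constant for free.

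The one point you should not gloss over is the constant. Your final bound carries $2C$, where the factor $2$ comes from symmetrization and $C$ from the Khintchine/Rudelson step, and the claim "with the sharp constant one may take $2C\le 3$" is asserted rather than verified; the value $3$ in the statement is precisely what requires the careful constant bookkeeping done in \cite{rudelson2007sampling} (and, downstream, the paper's numerical constants in Theorem \ref{thm: random submatrices} and Corollary \ref{cor: random submatrix} depend on it). If you intend the proof to stand on its own, either track that constant explicitly or, as you note, fall back on citing the original result, which is what the paper does.
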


\begin{thm}[Tropp~\cite{tropp2008norms}]\label{thm:tropp}
Let $A$ be an $m\times n$ matrix with rank $r$.
    Let $I \sim \SSubset(n,\sigma)$, where $\sigma \in (0,1)$.
    Let $p \ge 2$ and let $q=\max\{p,2\log r\}$. Then
    \[
    \mathbb{E}_p\|P_I A\|_{1\rightarrow 2}
    \le \sqrt{\sigma}\norm{A}_{1\rightarrow 2} + 2^{1.25}\sqrt{q}\mathbb{E}_p\norm{P_IA}_\infty
    \]
\end{thm}

\begin{proof}[Proof of Theorem~\ref{thm: random submatrices}]

 First, we apply Lemma~\ref{lem:RV} twice (in the same manner as in \cite{tropp2008norms}), where we first take $P_IB,P_{I'}$ in place of $A,P_I$, and then take $B^\tran,P_I$ in place of $A,P_I$. So we obtain
    \[
    \begin{aligned}
        \mathbb{E}_p\norm{P_I B P_{I'}}
        &\le \sqrt{\sigma}\mathbb{E}_p\norm{P_I B} + 3\sqrt{q}\mathbb{E}_p\norm{P_I B P_{I'}}_{1\rightarrow 2}\\
        &\le \sigma\norm{B} + 3\sqrt{q\sigma}\mathbb{E}_p\norm[1]{B^\tran P_I}_{1\rightarrow 2} + 3\sqrt{q}\mathbb{E}_p\norm{P_I B P_{I'}}_{1\rightarrow 2} \\
        &\le \sigma\norm{B} + 3\sqrt{q\sigma}\norm[1]{B^\tran}_{1\rightarrow 2} + 3\sqrt{q}\mathbb{E}_p\norm{P_I B}_{1\rightarrow 2},
    \end{aligned}
    \]
    where the last inequality follows since the $1 \to 2$ norm of a submatrix is bounded by the $1 \to 2$ norm of a matrix. We then use Theorem \ref{thm:tropp} to complete the proof: 
    \pushQED{\qed}\begin{align*}\mathbb{E}_p\|P_I B\|_{1\rightarrow 2}
    &\le \sqrt{\sigma}\norm{B}_{1\rightarrow 2} + 2^{1.25}\sqrt{q}\mathbb{E}_p\norm{P_IB}_\infty\\
    &\leq
    \sqrt{\sigma}\norm{B}_{1\rightarrow 2} + 2^{1.25}\sqrt{q}\norm{B}_\infty\\
    &\leq \sqrt{\sigma}\norm{B}_{1\rightarrow 2} + 8\sqrt{q}\norm{B}_\infty/3.
    \qedhere
    \end{align*}

\end{proof}

Theorem \ref{thm: random submatrices} gives a tool when one wishes to study the case of independent random subsets of row indices and column indices. However, because our goal is to study random subsets of fixed size of a given set, we cannot apply Theorem \ref{thm: random submatrices} directly since now the selections of row indices and column indices are not independent. Instead, we would like to make $I=I'$ and change the model of sampling. The following tools make this possible.

\begin{lem}
    [Decoupling \cite{tropp2008norms}] \label{decoupling}
    Let $B$ be a diagonal-free symmetric $n \times n$ matrix. 
    Let $I,I' \sim \SSubset(n,\sigma)$ be two independent subsets, where $\sigma \in (0,1)$.
    Then for every $p \ge 2$, we have
    \[\mathbb{E}_p\norm{P_I B P_I}\leq 2\mathbb{E}_p\norm{P_I B P_{I'}}.\] 
\end{lem}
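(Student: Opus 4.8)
The plan is to prove this by a standard decoupling argument (in the spirit of Bourgain--Tzafriri and de la Pe\~na--Montgomery-Smith, and following Tropp~\cite{tropp2008norms}), implemented here via a random bipartition of the index set. First I would realize $I\sim\SSubset(n,\sigma)$ through independent Bernoulli$(\sigma)$ indicators $\delta_1,\dots,\delta_n$, so that $I=\{i:\delta_i=1\}$, and introduce an auxiliary set $J\subseteq[n]$, independent of $I$, obtained by placing each index in $J$ independently with probability $1/2$. The elementary fact driving everything is that for $i\ne j$ one has $\mathbb{E}_J\bigl(\mathbbm{1}[i\in J,\ j\in J^{c}]+\mathbbm{1}[i\in J^{c},\ j\in J]\bigr)=\tfrac12$, while the corresponding indicator vanishes when $i=j$. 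Since $B$ is diagonal-free, summing $B_{ij}$ against these indicators over $i,j\in I$ gives the conditional (on $I$) identity
\[
\mathbb{E}_J\bigl[\,P_{I\cap J}\,B\,P_{I\cap J^{c}} + P_{I\cap J^{c}}\,B\,P_{I\cap J}\,\bigr] = \tfrac12\,P_I B P_I .
\]

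Next I would convert this identity into an $L_p$ estimate. Applying Jensen's inequality to the convex function $\norm{\cdot}$ conditionally on $I$, and then Minkowski's integral inequality to move $\mathbb{E}_J$ outside the $L_p(I)$-norm, yields
\[
\mathbb{E}_p\norm{P_I B P_I}\ \le\ 2\,\mathbb{E}_J\,\mathbb{E}_p\norm{P_{I\cap J}\,B\,P_{I\cap J^{c}} + P_{I\cap J^{c}}\,B\,P_{I\cap J}} .
\]
For a fixed $J$, ordering coordinates with $I\cap J$ before $I\cap J^{c}$ (and dropping the coordinates outside $I$), the matrix inside the norm is block anti-diagonal, $\begin{pmatrix} 0 & M \\ M^{\tran} & 0 \end{pmatrix}$, where $M$ is the $(I\cap J)\times(I\cap J^{c})$ block of $B$ and the lower-left block is $M^{\tran}$ by symmetry of $B$. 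Hence its operator norm equals $\norm{M}=\norm{P_{I\cap J}\,B\,P_{I\cap J^{c}}}$, so the right-hand side above is exactly $2\,\mathbb{E}_J\,\mathbb{E}_p\norm{P_{I\cap J}\,B\,P_{I\cap J^{c}}}$.

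Finally I would recognize the decoupled quantity. For a fixed partition $(J,J^{c})$, the pair $(I\cap J,\ I\cap J^{c})$ is a pair of \emph{independent} Bernoulli$(\sigma)$ random subsets of $J$ and of $J^{c}$, because the coin flips $\delta_i$ over the disjoint blocks $J$ and $J^{c}$ are independent. If $I,I'\sim\SSubset(n,\sigma)$ are independent, then $(I\cap J,\ I'\cap J^{c})$ has the same joint law, and $P_{I\cap J}\,B\,P_{I'\cap J^{c}}$ is a submatrix of $P_I B P_{I'}$, so its norm is at most $\norm{P_I B P_{I'}}$. Therefore $\mathbb{E}_p\norm{P_{I\cap J}\,B\,P_{I\cap J^{c}}}=\mathbb{E}_p\norm{P_{I\cap J}\,B\,P_{I'\cap J^{c}}}\le\mathbb{E}_p\norm{P_I B P_{I'}}$ for every $J$; averaging this $J$-independent bound over $J$ gives $\mathbb{E}_p\norm{P_I B P_I}\le 2\,\mathbb{E}_p\norm{P_I B P_{I'}}$.

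The genuinely delicate points I would be careful about are: (i) the role of diagonal-freeness of $B$, which is exactly what prevents the diagonal blocks $P_{I\cap J}\,B\,P_{I\cap J}$ from appearing after averaging over $J$; and (ii) the distributional identifications — that $(I\cap J,\ I\cap J^{c})$ and $(I\cap J,\ I'\cap J^{c})$ have the same law — which rest on the independence of the coordinate-wise coin flips over the disjoint blocks $J$ and $J^{c}$. Everything else (Jensen, Minkowski's integral inequality, the norm of a block anti-diagonal symmetric matrix, and the fact that passing to a submatrix does not increase the operator norm) is routine.
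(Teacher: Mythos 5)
Your proof is correct. Note that the paper itself gives no argument for this lemma; it is quoted directly from Tropp's work, so there is nothing internal to compare against. What you have written is exactly the standard random-bipartition decoupling argument that underlies the cited result: the identity $\mathbb{E}_J\bigl[P_{I\cap J}BP_{I\cap J^{c}}+P_{I\cap J^{c}}BP_{I\cap J}\bigr]=\tfrac12 P_IBP_I$ (where diagonal-freeness is used precisely to kill the $i=j$ terms), Jensen plus the triangle inequality in $L_p$ to pull the average over $J$ outside, the block anti-diagonal observation giving $\norm{P_{I\cap J}BP_{I\cap J^{c}}}$, and the distributional identification of $(I\cap J,\,I\cap J^{c})$ with $(I\cap J,\,I'\cap J^{c})$ followed by the submatrix bound $\norm{P_{I\cap J}BP_{I'\cap J^{c}}}\le\norm{P_IBP_{I'}}$. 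All the delicate points you flag are handled correctly, and the constant $2$ comes out as in the statement; your write-up simply makes the cited lemma self-contained.
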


\begin{lem}[Random subset models \cite{tropp2008random}]    \label{random models}
    Let $B$ be an $n \times n$ matrix.
    Let $I \sim \SSubset(n,\sigma)$ and $J \sim \SSubset(n,m)$ be two independent subsets,  where $\sigma \in (0,1)$ and $m=\sigma n\ge 1$. Then
    for every $p\geq 2$, we have
    \[
    \mathbb{E}_p\norm{P_J B P_J}\leq 2^{1/p}\mathbb{E}_p\norm{P_I B P_I}.
    \]
\end{lem}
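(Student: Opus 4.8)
The plan is to couple the fixed-size model with the independent model by conditioning on the cardinality $|I|$, and then to exploit two elementary facts: the monotonicity of $\norm{P_S B P_S}$ in the index set $S$, and the fact that $\mathrm{Binomial}(n,\sigma)$ has $m=\sigma n$ as a median when $m$ is an integer. First I would record the monotonicity fact: for any $n\times n$ matrix $B$ and nested subsets $S\subseteq T\subseteq[n]$,
\[
\norm{P_S B P_S}=\sup\bigl\{\vx^\tran B\vy:\ \vx,\vy\in\R^S,\ \norm{\vx}_2\le1,\ \norm{\vy}_2\le1\bigr\}\le\norm{P_T B P_T},
\]
since enlarging $S$ only enlarges the feasible set of the supremum. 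In particular $\norm{P_S B P_S}\le\norm{P_T B P_T}$ holds pointwise whenever $S\subseteq T$.

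Next, since $I\sim\SSubset(n,\sigma)$ retains each coordinate independently with probability $\sigma$, conditionally on the event $\{|I|=k\}$ the set $I$ is uniform over all $k$-subsets of $[n]$; that is, it then has the law of $\SSubset(n,k)$. I would set $f(k)\coloneqq\E\norm{P_K B P_K}^p$ for $K\sim\SSubset(n,k)$ and argue that $f$ is non-decreasing. For this, one realizes $K_k\sim\SSubset(n,k)$ and $K_{k+1}\sim\SSubset(n,k+1)$ on a common probability space with $K_k\subseteq K_{k+1}$: draw $K_{k+1}$ uniformly and delete one of its elements uniformly at random; a one-line count shows the resulting set is uniform among the $k$-subsets. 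Then the monotonicity fact gives $\norm{P_{K_k}BP_{K_k}}^p\le\norm{P_{K_{k+1}}BP_{K_{k+1}}}^p$ pointwise, hence $f(k)\le f(k+1)$.

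Finally I would combine these observations. By the law of total expectation and the monotonicity of $f$,
\[
\E\norm{P_I B P_I}^p=\sum_{k=0}^n\Pr{|I|=k}\,f(k)\ \ge\ f(m)\sum_{k\ge m}\Pr{|I|=k}\ =\ f(m)\,\Pr{|I|\ge m}.
\]
Since $|I|\sim\mathrm{Binomial}(n,\sigma)$ and $m=\sigma n\in\mathbb{Z}$, the integer $m$ is a median of $|I|$, so $\Pr{|I|\ge m}\ge\tfrac12$. As $\E\norm{P_J B P_J}^p=f(m)$, this yields $\E\norm{P_J B P_J}^p\le 2\,\E\norm{P_I B P_I}^p$, and taking $p$-th roots gives the claimed bound $\E_p\norm{P_J B P_J}\le 2^{1/p}\,\E_p\norm{P_I B P_I}$.

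I expect the only point requiring any care is the assertion that $m=\sigma n$ is a median of $\mathrm{Binomial}(n,\sigma)$; this is a known (if slightly delicate) fact about the binomial distribution, and in any event all that is needed is the weaker statement $\Pr{|I|\ge m}\ge\tfrac12$, which can be established directly. Every other ingredient is routine.
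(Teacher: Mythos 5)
Your proof is correct and follows essentially the same route as the paper's own argument (given in Appendix~\ref{s: bipartite expanders} for the bipartite analogue, Lemma~\ref{random models bipartite}): condition on the cardinality of the Bernoulli sample, use monotonicity of $\norm{P_S B P_S}$ under enlarging $S$, and invoke the fact that $\sigma n$ is a median of $\mathrm{Binomial}(n,\sigma)$ to get the factor $2$ (there $4$, from two independent parts). The only cosmetic difference is that you phrase the comparison through the monotone function $f(k)=\E\norm{P_K B P_K}^p$ rather than through tail probabilities integrated in $t$, which changes nothing of substance.
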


By combining the two lemmas above and Theorem \ref{thm: random submatrices}, we can obtain a corollary as follows:

\begin{cor}[Norms of random submatrices] \label{cor: random submatrix}
    Let $B$ be a symmetric $n\times n$ matrix.
    Let $J \sim \SSubset(n,m)$, where $\sigma \in (0,1)$ and $m=\sigma n\ge 1$.
    Let $p\geq 2$ and let $q=\max\{p,2\log n\}$. Then
    \[
    \mathbb{E}_p\norm{P_J B P_J}
    \le 4\sigma\norm{B} + 24\sqrt{q\sigma}\norm{B}_{1\rightarrow 2} + 35q\norm{B}_\infty.
    \]
\end{cor}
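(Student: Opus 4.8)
The plan is to chain together the three results established just above: Theorem~\ref{thm: random submatrices} (norms of random submatrices under the independent $\SSubset(n,\sigma)$ model), Lemma~\ref{decoupling} (decoupling), and Lemma~\ref{random models} (transfer from the $\SSubset(n,\sigma)$ model to the fixed-size $\SSubset(n,m)$ model). The obstacle is that Lemma~\ref{decoupling} requires the matrix to be \emph{diagonal-free}, while the hypothesis of Corollary~\ref{cor: random submatrix} only asks that $B$ be symmetric. So the first step is to split $B = B_0 + B_1$, where $B_1 = \diag(B)$ is the diagonal part and $B_0 = B - B_1$ is the diagonal-free symmetric part. The diagonal part is harmless: for any index set $J$ we have $\norm{P_J B_1 P_J} \le \norm{B_1} = \max_i |B_{i,i}| \le \norm{B}_\infty$, so it contributes at most $\norm{B}_\infty$ to the final bound, which is absorbed into the $35q\norm{B}_\infty$ term.

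Next I would handle $B_0$. By Lemma~\ref{random models} applied to $B_0$, then Lemma~\ref{decoupling} applied to $B_0$ (which is legitimate since $B_0$ is diagonal-free and symmetric), we get, for $J\sim\SSubset(n,m)$ and $I,I'\sim\SSubset(n,\sigma)$ independent,
\[
\mathbb{E}_p\norm{P_J B_0 P_J}
\le 2^{1/p}\,\mathbb{E}_p\norm{P_I B_0 P_I}
\le 2^{1/p}\cdot 2\,\mathbb{E}_p\norm{P_I B_0 P_{I'}}
\le 4\,\mathbb{E}_p\norm{P_I B_0 P_{I'}},
\]
using $2^{1/p}\le 2$ for $p\ge 2$. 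Now apply Theorem~\ref{thm: random submatrices} to the matrix $B_0$: with $q=\max\{p,2\log n\}$,
\[
\mathbb{E}_p\norm{P_I B_0 P_{I'}}
\le \sigma\norm{B_0} + 3\sqrt{q\sigma}\left(\norm{B_0}_{1\to2} + \norm[1]{B_0^\tran}_{1\to2}\right) + 8q\norm{B_0}_\infty.
\]
Since $B_0$ is symmetric, $\norm[1]{B_0^\tran}_{1\to2} = \norm{B_0}_{1\to2}$, so the middle term becomes $6\sqrt{q\sigma}\,\norm{B_0}_{1\to2}$. Multiplying through by $4$ gives $4\sigma\norm{B_0} + 24\sqrt{q\sigma}\,\norm{B_0}_{1\to2} + 32q\norm{B_0}_\infty$.

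The last step is to bound each norm of $B_0$ by the corresponding norm of $B$. We have $\norm{B_0} \le \norm{B} + \norm{B_1} \le \norm{B} + \norm{B}_\infty \le 2\norm{B}$ (or one can be slightly more careful, but this crude bound still fits the stated constants after re-checking — actually $\norm{B}_\infty\le\norm{B}$, so $\norm{B_0}\le 2\norm{B}$ would give a factor $8$ on the first term, so instead I would note $\norm{B_0}\le\norm{B}$ directly if one prefers, since removing the diagonal of a symmetric PSD-like object need not increase the norm in general — to be safe, use the bound $\norm{B_0}\le\norm{B}+\norm{B}_\infty$ and absorb the extra $\sigma\norm{B}_\infty\le q\norm{B}_\infty$ into the last term). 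Similarly $\norm{B_0}_{1\to2}\le\norm{B}_{1\to2}$, since a column of $B_0$ differs from the corresponding column of $B$ only by the deletion of one entry, which cannot increase the $\ell_2$ norm; and $\norm{B_0}_\infty\le\norm{B}_\infty$. Collecting everything — the $B_0$ contribution plus the $\norm{B}_\infty$ from the diagonal part — yields
\[
\mathbb{E}_p\norm{P_J B P_J}
\le 4\sigma\norm{B} + 24\sqrt{q\sigma}\,\norm{B}_{1\to2} + 35q\,\norm{B}_\infty,
\]
which is the claimed bound. The only mildly delicate bookkeeping is tracking how the diagonal part and the $\norm{B_0}\le\norm{B}+\norm{B}_\infty$ slack get absorbed into the final $35q\norm{B}_\infty$ term; everything else is a mechanical composition of the three cited results.
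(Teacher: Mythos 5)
Your proposal is correct and follows essentially the same route as the paper: split off the diagonal $D=\diag(B)$, chain Lemma~\ref{random models}, Lemma~\ref{decoupling} and Theorem~\ref{thm: random submatrices} on the diagonal-free part $B_0$, and absorb the $\norm{D}\le\norm{B}_\infty$ contribution and the $4\sigma\norm{B}_\infty$ slack into the $35q\norm{B}_\infty$ term using $q\ge 2$. One caution: your parenthetical suggestion that one could use $\norm{B_0}\le\norm{B}$ directly is false in general (deleting the diagonal of a symmetric matrix can increase the operator norm), but since you ultimately rely on the safe bound $\norm{B_0}\le\norm{B}+\norm{B}_\infty$, exactly as the paper does, the argument stands.
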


\begin{proof}
    Consider the symmetric, diagonal-free matrix $B_0=B-D$ where $D \coloneqq \diag(B_{1,1},\ldots,B_{n,n})$. 
    Combining Theorem~\ref{thm: random submatrices} with Lemmas~\ref{decoupling} and \ref{random models}, we obtain the following:
    \[
    \mathbb{E}_p\norm{P_J B_0 P_J} 
    \le 4\sigma\norm{B_0} + 24\sqrt{q\sigma}\norm{B_0}_{1\rightarrow 2} + 32q\norm{B_0}_\infty.
    \]
    Note that $\norm{B_0} \le \norm{B}+\norm{D}$, $\norm{B_0}_{1\rightarrow 2} \le \norm{B}_{1\rightarrow 2}$, $\norm{B_0}_\infty \le \norm{B}_\infty$, and 
    $\norm{P_JBP_J} \le \norm{P_JB_0P_J}+\norm{P_JDP_J}\le \norm{P_JB_0P_J}+\norm{D}$. This implies
    \[
    \mathbb{E}_p\norm{P_J B P_J} \leq \mathbb{E}_p\norm{P_JB_0P_J} + \norm{D}
    \le 4\sigma \left(\norm{B}+\norm{D}\right) + 24\sqrt{q\sigma}\norm{B}_{1\rightarrow 2} + 32q\norm{B}_\infty + \norm{D}.
    \]
    Notice that $\norm{D}=\max_i\abs{B_{i,i}} \le \norm{B}_\infty$ to complete the proof.
\end{proof}

\medskip

We are now ready to prove Theorem \ref{thm:subgraphs-of-pseudorandom-graphs}.

\begin{proof}[Proof of Theorem~\ref{thm:subgraphs-of-pseudorandom-graphs}]
    Let $C>0$ be a sufficiently large absolute constant. To see that the random induced subgraph $H\coloneqq G[X]$ is almost regular whp, we can apply Lemma \ref{lem:hypergeometric} with $n,(1\pm\gamma)d,\sigma n,\gamma/(1+\gamma)$ in place of $N,K,n,a$. Since $\sigma d\geq C\gamma^{-2}\log n$ for sufficiently large absolute constant $C>0$, it follows that, with probability at least $1-n^{-1}$, all degrees of $H$ are $(1\pm2\gamma)\sigma d$.
    Thus, it remains to bound the second singular value of $A_H$ whp, where $A_H$ is the adjacency matrix of $H$. 
    
    It is convenient to first work with normalized matrices. So let us consider the normalized adjacency matrix
    \begin{equation}    \label{eq: AGnormalized}
        \bar{A}_G=D^{-1/2}A_GD^{-1/2},
        \quad \text{where} \quad D=\diag(d_1,\ldots,d_n)    
    \end{equation}
    is the degree matrix of $G$. Note that  for any $m\times n$ matrix 
    $A$, we have that $s_2(A)=\min_B\:\norm{A-B}$, where the minimum is over all rank-one $m\times n$ matrices $B$ (see Lemma \ref{lem:low-rank-approximation}). Thus, by applying Observation~\ref{obs: |N|}, we have 
    \begin{equation}    \label{eq: B}
        s_2(\bar{A}_G)=\norm{B},
        \quad \text{where} \quad
        B = \bar{A}_G - \frac{1}{a} D^{1/2} \mathbbm{1}_n \mathbbm{1}_n^\tran D^{1/2}
        \quad \text{and } a = \sum_{i=1}^n d_i.
    \end{equation}

    Applying Corollary~\ref{cor: random submatrix} for any $p\geq 2$ and $q=\max\{p,2\log n\}$, we obtain
    
    \begin{equation}    \label{eq: E PXBPX}
        \mathbb{E}_p\|P_X B P_X\|
        \leq 4\sigma\|B\| + 24\sqrt{q\sigma}\|B\|_{1\rightarrow 2} + 35q\norm{B}_\infty.            
    \end{equation}
    Let us bound each of the three terms on the right hand side.

    \medskip

    {\em Bounding $\norm{B}$.} First, by \eqref{eq: B}, Corollary~\ref{cor: singular values normalized} and the assumptions, we have
    \begin{equation}    \label{eq: Bop bounded}
        \norm{B} 
        = s_2(\bar{A}_G) 
        \le \frac{s_2(A_G)}{(1-\gamma)d}
        \leq \frac{1.1\lambda}{d}. 
    \end{equation}
    
    \medskip

    {\em Bounding $\norm{B}_{1\rightarrow 2}$.} Triangle inequality yields
    \begin{equation}    \label{eq: B12}
        \|B\|_{1\rightarrow 2} 
        \le \norm{\bar{A}_G}_{1\rightarrow 2} + \frac{1}{a} \norm[1]{D^{1/2} \mathbbm{1}_n \mathbbm{1}_n^\tran D^{1/2}}_{1\rightarrow 2}.        
    \end{equation}
    Let us bound each of the terms appearing on the right hand side. First, 
    \[
    \norm{\bar{A}_G}_{1\rightarrow 2} 
    = \norm[1]{D^{-1/2}A_GD^{-1/2}}_{1\rightarrow 2}
    \le \norm[1]{D^{-1}} \norm{A_G}_{1\rightarrow 2}.
    \]
    We have $\norm{D^{-1}} = \max_i (1/d_i) \le 1.1/d$ 
    and $\norm{A_G}_{1\rightarrow 2} = \max_j \sqrt{d_j} \le 1.1\sqrt{d}$. Thus, \begin{equation}    \label{eq: AGbar}
        \norm{\bar{A}_G}_{1\rightarrow 2} \le \frac{1.3}{\sqrt{d}}.   
    \end{equation}
    Next, \begin{equation}    \label{eq: a}
    a = \sum_{i=1}^n d_i\ge (1-\gamma)dn \ge 0.9dn.
    \end{equation}
    Moreover, 
    \begin{equation}    \label{eq: D11D 12}
        \norm[1]{D^{1/2} \mathbbm{1}_n \mathbbm{1}_n^\tran D^{1/2}}_{1\rightarrow 2}
        \le \norm{D} \cdot \norm{\mathbbm{1}_n \mathbbm{1}_n^\tran}_{1\rightarrow 2}
        \le (1+\gamma)d \cdot \sqrt{n} \le 1.1d\sqrt{n}.   
    \end{equation}
    
    Putting \eqref{eq: AGbar}, \eqref{eq: a} and \eqref{eq: D11D 12} into \eqref{eq: B12}, we get
    \begin{equation}    \label{eq: B12 bounded}
        \|B\|_{1\rightarrow 2} 
        \le \frac{1.3}{\sqrt{d}} + \frac{1}{0.9dn} \cdot 1.1d\sqrt{n}
        \le \frac{2.6}{\sqrt{d}}.
    \end{equation}

    {\em Bounding $\norm{B}_\infty$.}
    Again, triangle inequality yields
    \begin{equation}    \label{eq: Binfty}
        \|B\|_\infty 
        \le \norm{\bar{A}_G}_\infty + \frac{1}{a} \norm[1]{D^{1/2} \mathbbm{1}_n \mathbbm{1}_n^\tran D^{1/2}}_\infty.        
    \end{equation}
    All entries of $\bar{A}_G$ are $1/\sqrt{d_id_j} \le 1.1/d$, 
    and all entries of $D^{1/2} \mathbbm{1}_n \mathbbm{1}_n^\tran D^{1/2}$ are $\sqrt{d_id_j} \le 1.1d$. Also, recall that $a \ge 0.9dn$ by \eqref{eq: a}. Thus, plugging them into \eqref{eq: Binfty}, we obtain
    \begin{equation}    \label{eq: Binfty bounded}
        \norm{B}_\infty 
        \le \frac{1.1}{d} + \frac{1}{0.9dn} \cdot 1.1d
        \le \frac{2.4}{d}.
    \end{equation}

    \medskip

    Putting \eqref{eq: Bop bounded}, \eqref{eq: B12 bounded} and \eqref{eq: Binfty bounded} into \eqref{eq: E PXBPX}, we obtain
    \[
    \mathbb{E}_p \norm{P_X B P_X}
    \le \frac{4.4\sigma\lambda}{d} + 63\sqrt{\frac{q\sigma}{d}} + \frac{84q}{d}.
    \]
    Multiplying on the left and right by $D^{1/2}$ inside the norm, we conclude that 
    \[
    \mathbb{E}_p \norm{D^{1/2} P_X B P_X D^{1/2}}
    \le \norm{D} \mathbb{E}_p \norm{P_X B P_X}
    \le 5\sigma\lambda + 70\sqrt{ q\sigma d} + 93q
    \eqqcolon \lambda_0,
    \]
    where we used that $\norm{D}=\max_i d_i \le 1.1d$.
    Since diagonal matrices commute, we can express the matrix above as follows:
    \[
    D^{1/2} P_X B P_X D^{1/2} 
    = P_X D^{1/2} B D^{1/2} P_X
    = P_X A_G P_X - \frac{1}{a} P_X D \mathbbm{1}_n \mathbbm{1}_n^\tran D P_X,
    \]
    where in the last step we used \eqref{eq: AGnormalized} and \eqref{eq: B}.
    Note that $\frac{1}{a} P_X D \mathbbm{1}_n \mathbbm{1}_n^\tran D P_X$ is a rank one matrix. Thus, by Lemma~\ref{lem:low-rank-approximation}, we have $s_2(P_X A_G P_X) \le \norm{D^{1/2} P_X B P_X D^{1/2}}$, and thus
    \[
    \E_p s_2(P_X A_G P_X) \le \lambda_0.
    \]
    Since the adjacency matrix $A_H$ of the induced subgraph $H$ is a $\sigma n\times \sigma n$ submatrix of the $n \times n$ matrix $P_X A_G P_X$, by the Interlacing Theorem for singular values (Theorem \ref{thm:Interlacing-Theorem}),  it follows that 
    \[
    \E_p s_2(A_H) \le \lambda_0.
    \]
    
    Now choose $p=2\log n$ and thus $q=p=2\log n$.
    Applying Markov's inequality, we obtain
    \[\begin{aligned}
        \Pr{s_2(A_H) \ge 1.1\lambda_0}
        &= \Pr{s_2(A_H)^p \ge (1.1\lambda_0)^p}
        \le \left( \frac{\mathbb{E}_p s_2(A_H)}{1.1\lambda_0}\right)^p \\
        &\le (1.1)^{-p}
        =(1.1)^{-2\log n} \le n^{-0.19}.
    \end{aligned}
    \]
    %\blue{DM: Here, $(1.1)^{-2\log n}=n^{-2\times\log(1.1)}\approx n^{-0.0828}$.}
    In other words, with probability at least $1- n^{-0.19}$, we have
    \[
    s_2(A_H) < 1.1\lambda_0 
    \le 5.5\sigma\lambda + 109\sqrt{ \sigma d\log n} + 205 \log n.
    \]
    
    To complete the proof, we show that the first term dominates the right hand side.
    Indeed, since the absolute constant $C$ is sufficiently large, the first condition in Theorem \ref{thm:subgraphs-of-pseudorandom-graphs} implies that $205 \log n \le \sqrt{\sigma d \log n}$. Similarly, the second condition in the theorem implies that $110\sqrt{ \sigma d\log n} \le 0.5 \sigma\lambda$. Then it follows that 
    \[
    s_2(A_H) \le 5.5\sigma\lambda + 0.5 \sigma\lambda = 6 \sigma\lambda.
    \]
    Therefore, with probability at least $(1-n^{-1})(1-n^{-0.19})\geq 1-n^{-1/6}$, $H$ is a $\left(\sigma n, (1\pm2\gamma )\sigma d, 6\sigma\lambda \right)$-graph,  which completes the proof of Theorem~\ref{thm:subgraphs-of-pseudorandom-graphs}.
\end{proof}

\medskip

Sometimes we will work on the bipartite subgraph $G[X,Y]$ induced by random disjoint subsets $X,Y\subseteq V(G)$. We also have $G[X,Y]$ is a bipartite spectral expander whp, which is a direct corollary of Theorem \ref{thm:subgraphs-of-pseudorandom-graphs}.

\begin{cor}\label{cor:subgraphs-of-pseudorandom-bipartite-graphs}
Let $\gamma\in (0,1/200]$ be a constant. There exists an absolute constant $C$ such that the following holds for sufficiently large $n$. 
    Let $d,\lambda>0$, let $\sigma_1,\sigma_2\in[1/n,1)$, and let $G$ be an $(n,(1\pm\gamma)d,\lambda)$-graph. 
    Let $X,Y\subseteq V(G)$ with $|X|=\sigma_1 n$ and $|Y|=\sigma_2n$
    be two disjoint  subsets chosen uniformly at random, and let $H \coloneqq G[X,Y]$ be the bipartite subgraph of $G$ induced by $X$ and $Y$. 
    Let $\sigma\coloneqq \sigma_1+\sigma_2$. Assume that
    \[
    \sigma_1d,\sigma_2 d\ge C \gamma^{-2} \log n 
    \quad \text{and} \quad
    \sigma\lambda \ge C\sqrt{\sigma d\log n}.
    \]
    Then with probability at least $1-n^{-1/7}$, $H$ is a $\left(\sigma n, (1\pm2\gamma )\sigma d, 6\sigma\lambda \right)$-bipartite expander.  
\end{cor}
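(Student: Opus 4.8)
The plan is to realize the bipartite subgraph $G[X,Y]$ as an induced bipartite subgraph of a suitable ambient almost-regular expander, and then invoke Theorem~\ref{thm:subgraphs-of-pseudorandom-bipartite-graphs} directly. First I would set $W \coloneqq X \cup Y$, which is a uniformly random $(\sigma n)$-set of $V(G)$ since $X$ and $Y$ are disjoint uniform random sets of sizes $\sigma_1 n$ and $\sigma_2 n$; conditioning on $W$, the pair $(X,Y)$ is a uniformly random ordered partition of $W$ into parts of the prescribed sizes. Applying Theorem~\ref{thm:subgraphs-of-pseudorandom-graphs} with the given $\gamma$ and $\sigma$ (the hypotheses $\sigma_1 d, \sigma_2 d \ge C\gamma^{-2}\log n$ imply $\sigma d \ge C\gamma^{-2}\log n$, and the second hypothesis is exactly what the theorem needs), we get that with probability at least $1-n^{-1/13}$ the induced subgraph $G' \coloneqq G[W]$ is a $(\sigma n, (1\pm 2\gamma)\sigma d, 6\sigma\lambda)$-graph. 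Call this good event $\mathcal E_1$.

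Next, on the event $\mathcal E_1$, I would verify that $G' = G[W]$ together with the partition $W = X \cup Y$ meets Definition~\ref{def:bipartite-expander} of an $(m, (1\pm 2\gamma')\tfrac{d'm}{m}, \lambda')$-bipartite expander for the right parameters — i.e. that it is an induced bipartite subgraph of an almost-regular expander (namely $G'$ itself) in which every vertex of $X$ has the correct degree into $Y$ and vice versa. The degree bound is the only substantive point: for $v \in X$, $\deg_{G[X,Y]}(v) = \deg_{G'}(v, Y)$, and since $Y$ is a uniformly random subset of $W$ of size $\sigma_2 n = \tfrac{\sigma_2}{\sigma}\cdot |W|$, the quantity $\deg_{G'}(v,Y)$ is hypergeometrically distributed with mean $\tfrac{\sigma_2}{\sigma}\deg_{G'}(v) = (1\pm 2\gamma)\sigma_2 d$. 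A Chernoff bound (Lemma~\ref{lem:hypergeometric}) with relative deviation of order $\gamma$, together with a union bound over the at most $n$ vertices, shows that with probability at least $1 - n^{-1}$ every vertex of $X$ has $\deg_{G[X,Y]}(v) = (1\pm c\gamma)\sigma_2 d$ for a small absolute constant $c$, and symmetrically for $Y$; here I need $\sigma_2 d$ (resp.\ $\sigma_1 d$) to be at least a large multiple of $\gamma^{-2}\log n$, which is guaranteed by hypothesis. One should double-check that the accumulated error stays within the $(1\pm 2\gamma)$ window demanded by the conclusion, possibly by starting from Theorem~\ref{thm:subgraphs-of-pseudorandom-graphs} applied with $\gamma$ replaced by a slightly smaller constant so that the degree errors compound to at most $2\gamma$; this is exactly the kind of constant-chasing the paper elsewhere absorbs into the phrase "for sufficiently large $C$". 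Intersecting this event with $\mathcal E_1$ and adjusting the failure probability from $n^{-1/13}$ to $n^{-1/14}$ completes the argument.

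The main obstacle, such as it is, is bookkeeping rather than conceptual: one must be careful that $X \cup Y$ really is uniformly distributed among $(\sigma n)$-sets (true precisely because $X, Y$ are disjoint and independent uniform of fixed sizes), and that the degree-regularity of $G[X,Y]$ — which Definition~\ref{def:bipartite-expander} phrases as $\deg_H(v) = (1\pm\gamma)\tfrac{d|V_{3-i}|}{n}$ relative to the ambient expander $G'$ on $\sigma n$ vertices — matches the hypergeometric concentration above with the constant $2\gamma$ and not something larger. An alternative, cleaner route that avoids re-deriving the degree concentration is simply to mimic the proof of Theorem~\ref{thm:subgraphs-of-pseudorandom-bipartite-graphs} verbatim, replacing the bipartite adjacency matrix of a genuine bipartite expander with the off-diagonal block $A_G[X \cup Y]$ indexed by $W$; since the spectral estimates in Corollary~\ref{cor: random submatrix} only see the matrix $B$ and its norms, the $6\sigma\lambda$ bound on $s_2$ transfers without change, and only the degree computation is new. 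Either way, the spectral part is immediate from the already-proved machinery and the only real work is the elementary hypergeometric degree estimate together with a union bound over $V(G)$.
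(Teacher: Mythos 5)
Your proposal follows essentially the same route as the paper: view $X\cup Y$ as a uniformly random $(\sigma n)$-set, apply Theorem~\ref{thm:subgraphs-of-pseudorandom-graphs} to get that $G[X\cup Y]$ is a $(\sigma n,(1\pm2\gamma)\sigma d,6\sigma\lambda)$-graph with probability at least $1-n^{-1/13}$, and handle the bipartite degree condition by a hypergeometric Chernoff bound plus a union bound over vertices; the paper does exactly this and combines the failure probabilities to get $1-n^{-1/14}$.

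The one place where your bookkeeping goes slightly wrong is the degree step. You condition on the event that $G'=G[W]$ has all degrees $(1\pm2\gamma)\sigma d$ and then split $\deg_{G'}(v)$ hypergeometrically between $X$ and $Y$; this yields $\deg_H(v)=(1\pm2\gamma)(1\pm\varepsilon)\sigma_2 d$ with $\varepsilon$ of order $\gamma/\sqrt{C}$, which overshoots the required window $(1\pm2\gamma)\sigma_2 d$. Your proposed repair --- rerunning Theorem~\ref{thm:subgraphs-of-pseudorandom-graphs} with a slightly smaller $\gamma$ --- is not available, because that theorem's hypothesis requires the \emph{ambient} graph $G$ to be an $(n,(1\pm\gamma')d,\lambda)$-graph for the same parameter $\gamma'$, and $G$ is only assumed to be $(1\pm\gamma)$-almost regular. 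The correct (and simpler) fix, which is what the paper does, is to prove the degree condition directly in $G$ rather than in $G'$: for each $v\in V$ one has $\deg_G(v)=(1\pm\gamma)d$, the marginal of $Y$ (resp.\ $X$) is uniform over $\sigma_2 n$-sets (resp.\ $\sigma_1 n$-sets), so $\deg_G(v,Y)$ is hypergeometric with mean $(1\pm\gamma)\sigma_2 d$, and Lemma~\ref{lem:hypergeometric} with relative deviation $\gamma/(1+\gamma)$ lands inside $(1\pm2\gamma)\sigma_2 d$ because the extra factor of slack between $\gamma$ and $2\gamma$ is still unspent. With that adjustment (or with your alternative suggestion of rerunning the proof of Theorem~\ref{thm:subgraphs-of-pseudorandom-bipartite-graphs}, which also works), the argument is complete and matches the paper's.
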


\begin{proof}
Let $C>0$ be a sufficiently large absolute constant. Since $\sigma_1n,\sigma_2n\geq C\gamma^{-2}\log n$, by Chernoff's bounds, we have that 
\[
\Pr{\exists v\in V, \deg(v,X)\neq \left(1\pm2\gamma\right)\sigma_1n}\leq n^{-1}
\]
and
\[
\Pr{\exists v\in V, \deg(v,Y)\neq \left(1\pm2\gamma\right)\sigma_2n}\leq n^{-1}.
\]

Next, note that $X\cup Y$ is a random subset of size $|X|+|Y|=\sigma_1n+\sigma_2n=\sigma n$. Now, since $\sigma d=\sigma_1d+\sigma_2d\geq 2C\gamma^{-2}\log n$ and $\sigma\lambda\geq C\sqrt{\sigma d\log n}$, Theorem \ref{thm:subgraphs-of-pseudorandom-graphs} implies that with probability at least $1-n^{-1/6}$,
\[
G[X\cup Y]\text{ is a }\left(\sigma n,(1\pm2\gamma)\sigma d,6\sigma\lambda\right)\text{-graph}.
\]

Therefore, we have that with probability at least $(1-2n^{-1})(1-n^{-1/6})\geq 1-n^{-1/7}$,
\[
H\coloneqq G[X,Y]\text{ is a }\left(\sigma n,(1\pm2\gamma)\sigma d,6\sigma\lambda\right)\text{-bipartite expander},
\]
which completes the proof.
\end{proof}

\section{Proof of Theorem \ref{main-thm}}\label{s: main proof}

In this section, we prove our main result, Theorem \ref{main-thm}. Since regular spectral expanders can be viewed as $(n, (1 \pm \gamma)d, \lambda)$-graphs, the following slightly stronger statement will directly imply Theorem \ref{main-thm}.

\begin{thm}\label{generalized-main-thm}
Let $\gamma\in (0,1/400]$ and let $n$ be a sufficiently large integer. Then,  
any $(n,(1\pm \gamma)d,\lambda)$-graph 
with $\lambda\leq d/70000$ and $d \geq \log^{6}n$
contains a Hamilton cycle.
\end{thm}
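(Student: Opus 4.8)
The plan is to carry out the absorption strategy sketched in Section~\ref{s: outline}, in the style of Montgomery~\cite{montgomery2020hamiltonicity}, with the spectral input supplied by Theorem~\ref{thm:subgraphs-of-pseudorandom-graphs} and its bipartite companions (Theorem~\ref{thm:subgraphs-of-pseudorandom-bipartite-graphs} and Corollary~\ref{cor:subgraphs-of-pseudorandom-bipartite-graphs}). Throughout, $G$ is an $(n,(1\pm\gamma)d,\lambda)$-graph with $\gamma\le\tfrac1{1200}$, $\lambda\le d/9000$ and $d\ge\log^{10}n$. As a harmless preliminary, since $s_2(A(G))\le\lambda$ stays true if $\lambda$ is increased, I would replace $\lambda$ by $\max\{\lambda,\,C\sqrt d\,\log^3 n\}$; for large $n$ this is still $\le d/9000$ because $d\ge\log^{10}n$, and it guarantees the ``Alon--Boppana'' condition $\sigma\lambda\ge C\sqrt{\sigma d\log n}$ of Theorem~\ref{thm:subgraphs-of-pseudorandom-graphs} at every sampling density $\sigma\ge 1/\mathrm{polylog}(n)$ that occurs below.

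\emph{Random construction.} Reserve a uniformly random $2(k+1)$-set $V_{\mathrm{end}}$ of ``endpoints'', where $k\coloneqq n/\log^5n$. Then, using Corollary~\ref{cor:expander} repeatedly to extend and close short paths while only $o(n)$ vertices are used, build $m\coloneqq n/\log^3n$ vertex-disjoint cycles $C_1,\dots,C_m$ of length $\Theta(\log n)$ avoiding $V_{\mathrm{end}}$, each with four consecutive designated vertices $x_i,a_i,b_i,y_i$, and let $P_i$ be the long $x_i$--$y_i$ arc of $C_i$. Next build a path $S$ running through the edges $a_1b_1,\dots,a_mb_m$ in order and meeting no other vertex of any $C_i$ (again by iterated use of Corollary~\ref{cor:expander}), and set $P\coloneqq\big(S\cup\bigcup_iC_i\big)\setminus\{a_ib_i:i\in[m]\}$; the two local moves at each $a_i$ described in the outline show that for every $I\subseteq[m]$ there is a path on $V(S)\cup\bigcup_{i\in I}V(P_i)$ with the same endpoints as $S$. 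Finally cover $V(G)\setminus\big(V(S)\cup\bigcup_iV(C_i)\big)$ by $k$ vertex-disjoint paths $S_1,\dots,S_k$ whose endpoints are exactly the vertices of $V_{\mathrm{end}}$ (a standard expander argument: grow long paths greedily with prescribed endpoints). Put $S_0\coloneqq S$, $v_0\coloneqq a_1$, $w_0\coloneqq b_m$, $X_0\coloneqq\{v_0,\dots,v_k\}$, $Y_0\coloneqq\{w_0,\dots,w_k\}$, exactly as in the outline.

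\emph{Connecting and conclusion.} Contract each $P_i$ to a vertex $z_i$ to form the digraph $D$ on $X_0\cup Y_0\cup\{z_1,\dots,z_m\}$ with $z_iz_j\in E(D)\iff y_ix_j\in E(G)$ and the analogous rule for edges meeting $X_0,Y_0$; set $W\coloneqq\{z_1,\dots,z_m\}$ and $\ell\coloneqq\lceil 3\log n\rceil=\Theta(\log n)$, so that $\log n\le\ell<\tfrac17n^{\alpha}$, $k\le(3/2)^\ell$ and $|W|=m\ge 22k\ell$. To apply Lemma~\ref{lem:connecting-lemma} I must verify \ref{P1}--\ref{P4}. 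The point is that, via the dictionary $z_i\leftrightarrow(x_i,y_i)$, every bipartite subgraph of $D$ appearing in \ref{P1}--\ref{P4} is an induced bipartite subgraph of $G$ on two disjoint vertex sets which, given the random choices, are (essentially) uniformly random subsets of $V(G)$ of prescribed $1/\mathrm{polylog}(n)$ densities; moreover all these sets lie inside the random set $U^\ast\coloneqq V_{\mathrm{end}}\cup\{x_i,a_i,b_i,y_i:i\in[m]\}$ of size $\Theta(n/\log^3n)$, so by Theorem~\ref{thm:subgraphs-of-pseudorandom-graphs} $G[U^\ast]$ is whp an $(|U^\ast|,(1\pm2\gamma)d_0,\lambda_0)$-graph with $d_0=\Theta(d/\log^3n)$ and $\lambda_0\le d_0/1500$. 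Working inside $G[U^\ast]$ and applying Theorem~\ref{thm:subgraphs-of-pseudorandom-bipartite-graphs} / Corollary~\ref{cor:subgraphs-of-pseudorandom-bipartite-graphs} to the relevant random subsets (of density $\Theta(1/\log^2n)$, so that both $\sigma d_0\ge C\gamma^{-2}\log n$ and $\sigma\lambda_0\ge C\sqrt{\sigma d_0\log n}$ hold) produces, with probability $1-n^{-\Theta(1)}$, directed bipartite spectral expanders with degrees $d_1,d_2,d_3=\Theta(d/\log^5n)$ and second singular values $\lambda_i\le d_i/250$ --- a second factor of $6$ and a second doubling of $\gamma$ are incurred here, leaving the relevant ``$\gamma$'' below $\tfrac1{200}$. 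An averaging/Markov step upgrades ``with high probability'' to ``for at least a $1-n^{-\alpha}$ proportion of pairs'', which is the form demanded by \ref{P1}--\ref{P4}. Lemma~\ref{lem:connecting-lemma} then yields vertex-disjoint directed paths in $D$ joining $w_j$ to $v_{j+1}$ (indices modulo $k+1$) with interiors in $W$; undoing the contraction these consume a set $J\subseteq[m]$ of the $P_i$'s, and applying the absorbing property of $P$ with $I\coloneqq[m]\setminus J$ and concatenating $P$, the paths $S_1,\dots,S_k$ and the connecting paths around the cycle produces a Hamilton cycle of $G$, which proves Theorem~\ref{generalized-main-thm}.

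\emph{Main obstacle.} The crux is the verification of \ref{P1}--\ref{P4}: the Connecting Lemma inspects exponentially (in $\ell$ and $\log k$) many subset-pairs of $W$, so one needs \emph{uniform} control of the second singular values of the corresponding random bipartite subgraphs of $D$ --- exactly what Theorem~\ref{thm:subgraphs-of-pseudorandom-graphs} and its bipartite versions are built to supply. The care lies in (a) arranging the randomness so that the vertex sets underlying \ref{P1}--\ref{P4} are genuinely (near-)uniform, which forces the two-stage sampling $V(G)\to U^\ast\to(\text{random subsets of }U^\ast)$; (b) tracking the constant $9000=250\cdot6\cdot6$ and the bound $\gamma\le\tfrac1{1200}$ through the two factor-$6$ losses and two doublings of $\gamma$, so that the Connecting Lemma's hypotheses $\lambda_i\le d_i/250$ and $\gamma\le\tfrac1{200}$ hold; and (c) checking that every sampling density stays $\ge 1/\mathrm{polylog}(n)$ --- this is precisely where $d\ge\log^{10}n$ is used, as it keeps $\sigma d\ge C\gamma^{-2}\log n$ and $\sigma\lambda\ge C\sqrt{\sigma d\log n}$ satisfied throughout. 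By comparison, the greedy cycle-packing, the construction of the path $S$, and the $k$-path cover of the leftover are routine expander arguments.
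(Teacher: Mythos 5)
Your top-level skeleton (absorbing path built from absorbers, contraction of the $P_i$'s to an auxiliary digraph, the Connecting Lemma, spectral input from Theorem~\ref{thm:subgraphs-of-pseudorandom-graphs} and its bipartite versions, and the constant bookkeeping $9000=250\cdot 6\cdot 6$ with the doublings of $\gamma$) is the same as the paper's. However, the execution has two genuine gaps, both located exactly where you declare steps ``routine''. First, the construction of the absorbing structure and of the path cover cannot be done by iterating Corollary~\ref{cor:expander}: that corollary only produces an edge between two sets of size roughly $\lambda n/d=\Omega(n)$, so it cannot close a cycle of length $\Theta(\log n)$ at a prescribed vertex, nor thread the path $S$ through the $m$ prescribed edges $a_ib_i$ in a prescribed order, nor cover the leftover vertices by $k$ paths whose endpoints are exactly a pre-chosen set $V_{\mathrm{end}}$. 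Connecting prescribed vertex pairs by vertex-disjoint paths is precisely the difficulty the Connecting Lemma exists to solve, and the paper never performs such connections greedily: the triples $x_ia_i,\ a_ib_i,\ b_iy_i$ come from perfect matchings between four uniformly random sets (Lemma~\ref{perfect-matching-between-random-subsets} via \ref{R2}), the paths $P_i$ and $Q_i$ come from two applications of Lemma~\ref{lem:connecting-lemma} inside the random sets $R_1,R_2$ (Lemma~\ref{lem:absorber}), and the leftover is covered by randomly partitioning $W_1\cup W_2'$ into parts $V_1,\dots,V_t$ and taking perfect matchings between consecutive parts (Lemma~\ref{lem:partition-W_1}), so the endpoints of the cover land in the uniformly random sets $V_1,V_t$ rather than being prescribed in advance.

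Second, and more structurally, your verification of \ref{P1}--\ref{P4} rests on the claim that the designated vertices $x_i,y_i$, hence $U^\ast$ and all the subsets fed into Theorems~\ref{thm:subgraphs-of-pseudorandom-graphs} and \ref{thm:subgraphs-of-pseudorandom-bipartite-graphs}, are ``(essentially) uniformly random''. Those theorems require genuinely uniform random subsets, but in your construction $U^\ast$ is the output of an adaptive greedy process, so it is not uniform and no argument is given that it is close to uniform; adaptivity can bias the chosen vertices toward atypical neighborhoods, and moreover \ref{P1}--\ref{P4} concern the \emph{coupled} pairs $\bigl(\{y_i\}_{i\in I_1},\{x_j\}_{j\in I_2}\bigr)$, so even marginal near-uniformity would not suffice without controlling the joint distribution. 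The paper's proof is architected precisely to avoid this: all random sets are sampled first (Lemma~\ref{lem:partition} with \ref{R1}--\ref{R5}, later Lemma~\ref{lem:partition-W_1} with \ref{S1}--\ref{S4}), the ``for at least $1-n^{-\Theta(1)}$ proportion of subsets'' statements needed for \ref{P1}--\ref{P4} are established for these genuinely uniform sets (via Propositions~\ref{prop:double-counting-one-side}--\ref{prop:double-counting-bipartite}, \ref{R3}, \ref{R4}, \ref{S3}, \ref{S4} and Lemma~\ref{lem:matchable}), and only then are the deterministic constructions (matchings, Connecting Lemma) carried out inside them. Your ``two-stage sampling'' remark points toward this fix, but your construction as written does not implement it; to close the gap you essentially have to reorder the argument as the paper does.
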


Before proving the theorem, we first state and prove the following simple averaging argument.

\begin{claim}\label{claim:double-counting-same-set}
Let $\alpha\in[0,1]$, let $0<m\leq h\leq n/2$ be integers, and let $\mathcal{P}$ be any graph property. Let $G=(V,E)$ be a graph on $n$ vertices. Suppose that there are at least $1-\alpha$ proportion of pairs of disjoint $m$-sets $A,B\subseteq V$ such that $G[A,B]\in\mathcal{P}$. Let $\mathcal{F}\subseteq \binom Vm$ be the family of $m$-sets $A$ such that for at least $1-\alpha^{1/2}$ proportion of $m$-sets $B\subseteq V\setminus A$, $G[A,B]\in\mathcal{P}$. For $A\in \mathcal F$, let $\mathcal F_A\subseteq \binom{V\setminus A}m$ be the family of $m$-sets $B\subseteq V\setminus A$ such that $G[A,B]\in\mathcal{P}$. Then the following properties hold:
\begin{enumerate}[label=(A\arabic*)]
    \item\label{A1} $|\mathcal{F}|\ge (1-\alpha^{1/2})\binom{n}{m}$;
    \item\label{A2} for a uniformly random $h$-set $S\subseteq V\setminus A$, with probability at least $1-\alpha^{1/4}$, $|\mathcal F_A\cap \binom Sm|\ge (1-\alpha^{1/4})\binom hm$;
    \item\label{A3} for $A\in\binom Vm$ and $S\in \binom Vh$ chosen uniformly at random such that $A\cap S=\emptyset$, with probability at least $1-\alpha^{1/2}-\alpha^{1/4}$, for at least $1-\alpha^{1/4}$ proportion of $m$-sets $B\subseteq S$, $G[A,B]\in\mathcal{P}$.
\end{enumerate}
\end{claim}

\begin{proof} 
We prove the statements in sequence. 

\medskip

{\em Property \ref{A1}.} Suppose to the contrary that there are at least $\alpha^{1/2}\binom{n}{m}$ $m$-sets $A\notin \mathcal{F}$. 
By definition, each such $A$ contributes at least $\alpha^{1/2}\binom{n-m}{m}$ pairs $(A,B)$ such that $G[A,B]\notin\mathcal{P}$. 
Therefore, there are at least $\alpha^{1/2}\binom {n}{m}\cdot \alpha^{1/2}\binom{n-m}{m} = \alpha\binom{n}{m}\binom{n-m}{m}$ pairs $(A,B)$ such that $G[A,B]\notin\mathcal{P}$, contradicting the assumption that there are at least $1-\alpha$ proportion of pairs $(A,B)$ with $G[A,B]\in\mathcal{P}$. Thus, $|\mathcal{F}|\ge (1-\alpha^{1/2})\binom{n}{m}$.

\medskip

{\em Property \ref{A2}.} Suppose to the contrary that there are more than $\alpha^{1/4}\binom{n-m}{h}$ $h$-sets $S\subseteq V\setminus A$, each containing at least $\alpha^{1/4}\binom{h}{m}$ $m$-sets $B$ such that $B\notin \mathcal{F}_A$. Since each such subset $B$ is counted at most $\binom{n-2m}{h-m}$ times, there are in total more than 
\[
\frac{1}{\binom{n-2m}{h-m}}\cdot \alpha^{1/4}\binom{n-m}{h}\cdot \alpha^{1/4}\binom{h}{m} = \alpha^{1/2}\binom{n-m}{m}
\]
$m$-sets $B$ such that $B\notin\mathcal{F}_A$. This contradicts the assumption that $|\mathcal F_A|\ge (1-\alpha^{1/2})\binom{n-m}{m}$.

\medskip

{\em Property \ref{A3}.} By applying \ref{A1}, \ref{A2}, and the union bound, for disjoint uniformly randomly chosen $A\in \binom Vm$ and $S\in \binom Vh$, with probability at least $1-\alpha^{1/2}-\alpha^{1/4}$, we have that $A\in \mathcal F$ and $|\mathcal F_A\cap \binom Sm|\ge (1-\alpha^{1/4})\binom hm$.
This implies that for at least $1-\alpha^{1/4}$ proportion of $m$-sets $B\subseteq S$, $G[A,B]\in\mathcal{P}$. 

\medskip

The proof is completed.
\end{proof}

We are now ready to prove Theorem \ref{generalized-main-thm}.

\begin{proof}[Proof of Theorem \ref{generalized-main-thm}]

Assume that $\gamma\leq 1/400$, $\lambda\leq d/70000$, 
and that $G=(V,E)$ is an $(n,(1\pm\gamma)d,\lambda)$-graph with a sufficiently large integer $n$ and $d\geq \log^6n$. Since an $(n,(1\pm\gamma)d,\lambda)$-graph is also an $(n,(1\pm\gamma)d,\lambda')$-graph when $\lambda\leq\lambda'$, we may assume that $\sqrt{d}\log^3n\leq \lambda \leq d/70000 $. Let  $k\coloneqq n/\log^4n$.

\subsection{Partitioning the graph}

First, we find a partition of the vertex set of the $(n,(1\pm \gamma)d,\lambda)$-graph $G$ with some nice properties.

\begin{claim}\label{claim:obatin-endpoints}
There exists a partition $V=X_1\cup X_2\cup Y_1\cup Y_2\cup R_1\cup R_2$ with $|X_1|=|Y_1|=\frac{k}{5}$, $|X_2|=|Y_2|=\frac{4k}{5}$ and $|R_2|=\frac{4n}{5}$, such that the following properties hold: 
\begin{enumerate}[label=(P\arabic*)]
    \item\label{P1} for every vertex $v\in V$, we have: \[
    \begin{aligned}
    \deg(v,X_1),\deg(v,Y_1)&= (1\pm2\gamma)\frac{dk}{5n},\quad \deg(v,X_2),\deg(v,Y_2)= (1\pm2\gamma)\frac{4dk}{5n}\\    
    \deg(v,R_1)&=(1\pm2\gamma)\frac{d}{5},\quad\text{and}\quad \deg(v,R_2)=(1\pm2\gamma)\frac{4d}{5};
    \end{aligned}
    \]

    \item \label{P2}
    letting $X\coloneqq X_1\cup X_2$ and $Y\coloneqq Y_1\cup Y_2$, the subgraph
    \[
    G'\coloneqq G[X\cup Y\cup R_1]\text{ is an }\left(\frac{n}{5},(1\pm2\gamma)\frac{d}{5}, \frac{6\lambda}{5}\right)\text{-graph};
    \]

    \item \label{P3}
    for at least $1-n^{-1/7}$ proportion of disjoint subsets $S,T\subseteq R_1\cup R_2$ of equal size $k$,
    \[
    G[S,T]\text{ is a }\left(2k,(1\pm4\gamma)\frac{2dk}{n},\frac{18\lambda k}{n}\right)\text{-bipartite expander};
    \]

    \item \label{P4}
    for at least $1-n^{-1/28}$ proportion of $k$-sets $S\subseteq R_1\cup R_2$, the bipartite subgraphs 
    \[G[X,S]\text{ and }G[S,Y]\text{ are }\left(2k,(1\pm2\gamma)\frac{2dk}{n},\frac{12\lambda k}{n}\right)\text{-bipartite expanders};
    \]

    \item \label{P5}

    the bipartite subgraph
    \[
    G[X,Y]\text{ is a }\left(2k,(1\pm2\gamma)\frac{2dk}{n}, \frac{12\lambda k}{n}\right)\text{-bipartite expander}.
    \]

\end{enumerate}
\end{claim}

\begin{proof}[Proof of Claim~\ref{claim:obatin-endpoints}]
Let $V=X_1\cup X_2\cup Y_1\cup Y_2\cup R_1\cup R_2$ be a uniformly random partition  with $|X_1|=|Y_1|=\frac{k}{5}$, $|X_2|=|Y_2|=\frac{4k}{5}$ and $|R_2|=\frac{4n}{5}$. We wish to prove that each property among \ref{P1}--
\ref{P5} holds whp.

\medskip

{\em Property~\ref{P1}.} Since for all $v\in V$ we have $\mathbb{E}[\deg(v,X_1)]=(1\pm\gamma)\frac{dk}{5n}=\omega(\log n)$, it follows by Chernoff's bounds and the union bound that 
\[
\Pr{\exists v\in V, \deg(v,X_1)\neq \left(1\pm2\gamma\right)\frac{dk}{5n}}\leq ne^{-\omega(\log n)}=o(1).
\]
% and
% \[
% \Pr{\exists v\in V, \deg(v,X)\neq(1\pm2\gamma)\frac{dk}{n}\text{ or }\deg(v,Y)\neq(1\pm2\gamma)\frac{dk}{n}}\leq ne^{-\omega(\log n)}=o(1).
% \]
Similarly, the other degree bounds also hold with probability at least $1 - o(1)$. Therefore, property \ref{P1} holds with high probability. For the remainder of the proof, we will condition on property \ref{P1}.

\medskip

{\em Property~\ref{P2}.} Note that $X\cup Y\cup R_1$ is a uniformly random subset of size $\frac{n}{5}$. Since  $\frac{d}{5}=\omega(\gamma^{-2}\log n)$ and  $\frac{\lambda}{5}=\omega\left(\sqrt{\frac{d\log n}{5}}\right)$, 
by Theorem \ref{thm:subgraphs-of-pseudorandom-graphs}, we have that with probability at least $1-n^{-1/6}$, \[
    G'= G[X\cup Y\cup R_1]\text{ is an }\left(\frac{n}{5},(1\pm2\gamma)\frac{d}{5}, \frac{6\lambda}{5}\right)\text{-graph}.
    \]

\medskip

{\em Property~\ref{P3}.}
First, we prove that $G[R_1\cup R_2]$ is an $(n-2k,(1\pm\frac{3}{2}\gamma)d,\lambda)$-graph. Indeed, by property \ref{P1}, for every vertex $v\in V$, 
\[
\begin{aligned}
\deg(v,R_1\cup R_2)&=
\deg(v)-\deg(v,X_1)-\deg(v,X_2)-\deg(v,Y_1)-\deg(v,Y_2)\\
&=(1\pm\gamma)d-2(1\pm2\gamma)\left(\frac{dk}{5n}+\frac{4dk}{5n}\right)\\
&=\left(1\pm\frac{3}{2}\gamma\right)d.
\end{aligned}
\]
Also, by the Interlacing Theorem for singular values (Theorem \ref{thm:Interlacing-Theorem}), we have that $s_2(G[R_1\cup R_2])\leq s_2(G)\leq \lambda$. Therefore, $G[R_1\cup R_2]$ is an $(n-2k,(1\pm\frac{3}{2}\gamma)d,\lambda)$-graph.

Next, let $S,T\subseteq R_1\cup R_2$ be two disjoint $k$-sets chosen uniformly at random.  Since $(1+\frac{1}{2}\gamma)\frac{2dk}{n}\geq\frac{2dk}{n-2k}=\omega((2\gamma)^{-2}\log n)$ and  $
\frac{3\lambda k}{n}\geq \frac{2\lambda k}{n-2k}=\omega\left(\sqrt{\frac{2dk\log n}{n-2k}}\right)$,  Theorem \ref{thm:subgraphs-of-pseudorandom-graphs} implies that with probability at least $1-n^{-1/6}$, 
\[
G[S\cup T]\text{ is a }\left(2k,(1\pm4\gamma)\frac{2dk}{n},\frac{18\lambda k}{n}\right)\text{-graph}.
\]
Also, by Chernoff's bounds, with probability at least (say) $1-n^{-1}$, for every vertex $v\in V$, \[
\deg(v,S),\deg(v,T)=(1\pm4\gamma
)\frac{dk}{n}.
\]
Thus, by the union bound, for at least $(1- n^{-1/6})(1- n^{-1})\geq 1- n^{-1/7}$ proportion of disjoint subsets $S,T\subseteq R_1\cup R_2$ of equal size $k$,
    \[
    G[S,T]\text{ is a }\left(2k,(1\pm4\gamma)\frac{2dk}{n},\frac{18\lambda k}{n}\right)\text{-bipartite expander.}
    \]

\medskip

{\em Property~\ref{P4}.}  We only prove the property for $X$, while the analogous  statement for $Y$ can be shown similarly. By applying Corollary \ref{cor:subgraphs-of-pseudorandom-bipartite-graphs}, we can obtain that for at least $1-n^{-1/7}$ proportion of disjoint subsets $A,B\subseteq V$ of equal size $k$,
\[
G[A,B]\text{ is a }\left(2k,(1\pm2\gamma)\frac{2dk}{n},\frac{12\lambda k}{n}\right)\text{-bipartite expander}.
\]  

Note that 
$X$ is a uniformly random $k$-set and  $R_1\cup R_2\subseteq V\setminus X$ is a uniformly random $(n-2k)$-set. Thus, by \ref{A3} applied with $X,R_1\cup R_2,k,n-2k,n^{-1/7}$ in place of $A,S,m,h,\alpha$, we have that with probability at least $1-n^{-1/14}-n^{-1/28}\geq 1-n^{-1/29}$, for at least $1-n^{-1/28}$ proportion of $k$-sets $S\subseteq R_1\cup R_2$, 
\[
G[X,S]\text{ is a }\left(2k,(1\pm2\gamma)\frac{2dk}{n},\frac{12\lambda k}{n}\right)\text{-bipartite expander}.
\]

\medskip

{\em Property~\ref{P5}.} Note that $X\cup Y$ is a uniformly random subset of size $2k$. Since  $\frac{2dk}{n}=\omega(\gamma^{-2}\log n)$ and  $\frac{2\lambda k}{n}=\omega\left(\sqrt{\frac{2dk\log n}{n}}\right)$,  
by Theorem \ref{thm:subgraphs-of-pseudorandom-graphs}, we have that with probability at least $1-n^{-1/6}$, \[
    G[X\cup Y]\text{ is a }\left(2k,(1\pm2\gamma)\frac{2dk}{n}, \frac{12\lambda k}{n}\right)\text{-graph}.
    \]
Now by \ref{Q1}, the bipartite subgraph 
\[
    G[X,Y]\text{ is a }\left(2k,(1\pm2\gamma)\frac{2dk}{n}, \frac{12\lambda k}{n}\right)\text{-bipartite expander}.
    \]

\medskip

All in all, with positive probability all properties \ref{P1}--\ref{P5} hold, which guarantees a partition $V=X_1\cup X_2\cup Y_1\cup Y_2\cup R_1\cup R_2$ satisfying all desired properties. This completes the proof.
\end{proof}

\subsection{Finding $S_{res}$ and partitioning $(R_1\cup R_2)\setminus V(S_{res})$}

We pick a partition $V=X_1\cup X_2\cup Y_1\cup Y_2\cup R_1\cup R_2$ as in Claim \ref{claim:obatin-endpoints}. In order to find the subgraph $S_{res}$ which will be used to close a collection of vertex-disjoint paths into a cycle, we first verify the assumptions of Lemma \ref{lemma:find_sorting_network_in_expander}. Recall that $G'=G[X\cup Y\cup R_1]$.  Let $D\coloneqq \frac{d}{700\lambda}\geq 100$, and let $m\coloneqq \frac{(1+2\gamma)^2}{(1-2\gamma)^3}\cdot \frac{(6\lambda/5)\cdot(n/5)}{d/5}+1\leq \frac{2\lambda n}{d}$.

\begin{claim}\label{claim:verify-sorting-network}
The following properties hold:

\begin{enumerate}
    \item $G'$ is $m$-joined;

    \item $I(X\cup Y)$ is $(D,m)$-extendable in $G'$.
\end{enumerate}

\end{claim}

\begin{proof}[Proof of Claim \ref{claim:verify-sorting-network}]
Recall that by \ref{P2}, $G'$ is an $(\frac{n}{5},(1\pm2\gamma)\frac{d}{5}, \frac{6\lambda}{5})$-graph. Corollary \ref{cor:expander} implies that $G'$ is $m$-joined, so we are left to prove (2). In fact, since $I(X\cup Y)$ is an empty graph, $\Delta(I(X\cup Y))=0\leq D$. Also, recall that by \ref{P1}, for every vertex $v\in V$, $\deg_G(v,R_1)\geq (1-2\gamma)\frac{d}{5}\geq \frac{d}{6}$. Since $G$ is an $(n,(1\pm\gamma)d,\lambda)$-graph, by Lemma \ref{lem:expansion}, we have that for any subset $U\subseteq V(G')$ of size $1\leq |U|\leq 2m\leq \frac{4\lambda n}{d}$, 
\[
|N_{G'}(U)\setminus (X\cup Y)|\geq |N_{G}(U)\cap R_1|\geq \frac{d}{700\lambda}|U|=D|U|.
\]
Thus, by Proposition \ref{prop:weakerextendability}, $I(X\cup Y)$ is $(D,m)$-extendable in $G'$. This completes the proof.
\end{proof}

Now, let $C>0$ be the constant in Lemma \ref{lemma:find_sorting_network_in_expander}, and let $\ell\coloneqq\lfloor C\log^3(\frac{n}{5})\rfloor$. Since $D\geq 100$ and $m\leq \frac{n/5}{100D}$, by applying Lemma \ref{lemma:find_sorting_network_in_expander} on $G'$ with $\frac{n}{5},\frac{\log^4 n}{5\log^3(n/5)},X,Y$ in place of $n,K,V_1,V_2$, we can find a subgraph $S_{res}\subseteq G'=G[X\cup Y\cup R_1]$ with $|V(S_{res})|=k\ell$ satisfying the conclusion of the lemma. 

Assuming without loss of generality that $|R_1\cup R_2|$ is divisible by $k$, we further partition $R_1\cup R_2$ into $k$-sets, each of which has a small intersection with $S_{res}$, and every pair of such sets induces a bipartite expander. For convenience, we will let $V_1\coloneqq X$ and $V_t\coloneqq Y$ for the remainder of the proof, where $t\coloneqq \frac{n-2k}{k}$. 

\begin{claim}\label{claim:preparation-long-paths}
There exists a partition
\[
R_1\cup R_2=V_2\cup\ldots\cup V_{t-1}
\]
into $k$-sets such that the following properties hold:
\begin{enumerate}[label=(Q\arabic*)]
    \item\label{Q1} for each $2\leq i\leq t-1$, we have $|V_i\setminus V(S_{res})|\geq k-\frac{2Ck}{\log n}$;
    \item\label{Q2} for each $2\leq i\leq t-1$, $|V_{i,1}|=(1\pm\gamma)\frac{k}{5}$ and $|V_{i,2}|=(1\pm\gamma)\frac{4k}{5}$, where $V_{i,1}\coloneqq V_i\cap R_1$ and $V_{i,2}\coloneqq V_i\cap R_2$;

    \item \label{Q3} for each $2\leq i\leq t-1$ and for every vertex $v\in V$, we have  
    $$\deg(v,V_{i,1})=(1\pm5\gamma)\frac{dk}{5n} \textrm{ and } \deg(v,V_{i,2})=(1\pm5\gamma)\frac{4dk}{5n};$$

    \item \label{Q4} for each distinct $i,j\in[t]$, the subgraph \[
    G[V_i,V_j]\text{ is a }\left(2k,(1\pm5\gamma)\frac{2dk}{n},\frac{18\lambda k}{n}\right)\text{-bipartite expander};
    \]

    \item \label{Q5} for each distinct $i,j\in[t]$, the bipartite subgraph  %letting $V_{i,1}\coloneqq V_i\cap R_1$ and $V_{i,2}\coloneqq V_i\cap R_2$, 
    \[
    G[V_{i,1},V_{j,1}]\text{ is a }\left(|V_{i,1}|+|V_{j,1}|,(1\pm5\gamma)\frac{2dk}{5n},\frac{18\lambda k}{n}\right)\text{-bipartite expander,}
    \]
    where $V_{1,1}\coloneqq X_1$ and $V_{t,1}\coloneqq Y_1$.
\end{enumerate}
\end{claim}

\begin{proof}[Proof of Claim \ref{claim:preparation-long-paths}]
Let $
R_1\cup R_2=V_2\cup\ldots\cup V_{t-1}$ 
be a uniformly random partition into $k$-sets. We wish to prove that each property among \ref{Q1}--\ref{Q5} holds whp.

\medskip

{\em Property~\ref{Q1}.} Recall that $|S_{res}|=k\ell=\frac{n}{\log^4n}\cdot C\log^3(\frac{n}{5})$. Since for each $2\leq i\leq t-1$, $V_i$ is a uniformly random $k$-set, we have that $\mathbb{E}[|V_i\cap S_{res}|]=\frac{k}{\log^4n}\cdot C\log^3(\frac{n}{5})$. Thus, by Chernoff's bounds and the union bound,
\[
\Pr{|V_i\cap S_{res}|\geq \frac{2Ck}{\log n}\text{ for some }2\leq i\leq t-1}\leq te^{-\Theta\left(\frac{Ck}{\log n}\right)}=o(1).
\]
Therefore, property \ref{Q1} holds with probability $1-o(1)$.

\medskip

{\em Property~\ref{Q2}.} Recall that $|R_1|=\frac{n}{5}-2k$ and $|R_2|=\frac{4n}{5}$. Since $V_i$ is chosen as a uniformly random $k$-set, we have $\mathbb{E}[|V_{i,1}|]=\frac{k}{5}-\frac{2k^2}{n}$ and $\mathbb{E}[|V_{i,2}|]=\frac{4k}{5}$. By applying Chernoff's bounds and the union bound, we get
\[
\Pr{|V_{i,1}|\neq (1\pm\gamma)\frac{k}{5}\text{ or }|V_{i,2}|\neq (1\pm\gamma)\frac{4k}{5}\text{ for some }2\leq i\leq t-1}\leq te^{-\omega(\log n)}=o(1),
\]
where the last inequality holds since $k=\omega(\log n)$. Thus, property \ref{Q2} holds with probability $1-o(1)$. For the remainder of the proof, we will condition on property \ref{Q2}. %For the remainder of the proof, we condition on property \ref{Q2}.

\medskip

{\em Property~\ref{Q3}.} Recall from property \ref{P1} that for every vertex $v\in V$, we have $\deg(v,R_1)=(1\pm2\gamma)\frac{d}{5}$. Additionally, by \ref{Q2}, for each $2\leq i\leq t-1$, $V_{i,1}$ is a random subset of size $(1\pm\gamma)\frac{k}{5}$. Now, conditioning on  $|V_{i,1}|=a=(1\pm\gamma)\frac{k}{5}$, $\deg(v,V_{i,1})$ is a hypergeometric random variable with expectation $(1\pm2\gamma)\frac{da}{n}=(1\pm4\gamma)\frac{dk}{5n}$. Thus, fixing $v\in V$ and $2\leq i\leq t-1$, Chernoff's bounds imply that 
\[
\Pr{\deg(v,V_{i,1})\neq \left(1\pm5\gamma\right)\frac{dk}{5n}\,\Big|\,|V_{i,1}|=a}
\leq e^{-\omega(\frac{dk}{n})}.
\]
By the law of total probability and the union bound, we obtain that
\[
\begin{aligned}
&\Pr{\exists v\in V,\exists 2\leq i\leq t-1,\deg(v,V_{i,1})\neq \left(1\pm5\gamma\right)\frac{dk}{5n}}\\
\leq & nt\sum_{a=(1-\gamma)\frac{k}{5}}^{(1+\gamma)\frac{k}{5}}\Pr{\deg(v,V_{i,1})\neq \left(1\pm5\gamma\right)\frac{dk}{5n}\,\Big|\,|V_{i,1}|=a}\cdot\Pr{|V_{i,1}|=a}\\
\leq & nte^{-\omega(\frac{dk}{n})}\sum_{a=(1-\gamma)\frac{k}{5}}^{(1+\gamma)\frac{k}{5}}\Pr{|V_{i,1}|=a}\\
\leq & nte^{-\omega(\frac{dk}{n})}\\
=& o(1).
\end{aligned}
\]

Therefore, with an analogous proof for $\deg(v,V_{i,2})$, property \ref{Q3} holds with probability $1-o(1)$.

%\red{AF:you need to say few more words here about the propoerties P something. At first sight, to me it looked like a typo and that you meant to write Q3 and Q4.}
%By \ref{P3}, \ref{P4} and a union bound, property \ref{Q4} holds with probability at least 
% \[
% 1-\binom{t}{2}n^{1/56}=1-o(1).
% \]

%\blue{DM: The proof with better wording might like below: }

\medskip

{\em Property~\ref{Q4}.} First, recall that by \ref{P5}, $G[X,Y]$ is a $\left(2k,(1\pm2\gamma)\frac{2dk}{n},\frac{12\lambda k}{n}\right)$-bipartite expander. Next, recall that by \ref{P3}, for two random disjoint $k$-sets $V_i,V_j\subseteq R_1\cup R_2$ chosen uniformly at random, with probability at least $1-n^{-1/7}$, $G[V_i,V_j]$  is a $\left(2k,(1\pm4\gamma)\frac{2dk}{n},\frac{18\lambda k}{n}\right)$-bipartite expander. Thus, by the union bound, with probability at least $1-\binom{t-2}{2}n^{-1/7}$, for each pair $2\leq i<j\leq t-1$, $G[V_i,V_j]$  is a $\left(2k,(1\pm4\gamma)\frac{2dk}{n},\frac{18\lambda k}{n}\right)$-bipartite expander.

Finally, recall that by \ref{P4}, for a random $k$-sets $V_i\subseteq R_1\cup R_2$ chosen uniformly at random, with probability at least $1-n^{-1/28}$, $G[X,V_i]$ and $G[V_i,Y]$ are $\left(2k,(1\pm2\gamma)\frac{2dk}{n},\frac{12\lambda k}{n}\right)$-bipartite expanders. Again, by the union bound, with probability at least $1-(t-2)n^{-1/28}$, for each $2\leq i\leq t-1$, $G[X,V_i]$ and $G[V_i,Y]$ are $\left(2k,(1\pm4\gamma)\frac{2dk}{n},\frac{18\lambda k}{n}\right)$-bipartite expanders.

Therefore, in total, \ref{Q4} holds with probability at least 
\[
1-\binom{t-2}{2}n^{-1/7}-(t-2)n^{-1/28}=1-o(1).
\]

For the remainder of the proof, we will condition on property \ref{Q3} and \ref{Q4}. 
%In the rest of the proof, it is enough to condition on property \ref{Q3} and \ref{Q4}.

\medskip

% \blue{DM: All below are modified to be easy to read.}

{\em Property~\ref{Q5}.} Recall that by \ref{P1} and \ref{Q3}, we have that for each $i\in[t]$ and for every vertex $v\in V$, $\deg(v,V_{i,1})=(1\pm5\gamma)\frac{dk}{5n}$. Also, recall that \ref{Q4} implies that $s_2(G[V_i\cup V_j])\leq \frac{18\lambda k}{n}$. Thus, by the Interlacing Theorem for singular values (Theorem \ref{thm:Interlacing-Theorem}), $s_2(G[V_{i,1}\cup V_{j,1}])\leq s_2(G[V_i\cup V_j])\leq \frac{18\lambda k}{n}$. Therefore,
\[
    G[V_{i,1},V_{j,1}]\text{ is a }\left(|V_{i,1}|+|V_{j,1}|,(1\pm5\gamma)\frac{2dk}{5n},\frac{18\lambda k}{n}\right)\text{-bipartite expander.}
    \]

\medskip

All in all, with positive probability all properties \ref{Q1}--\ref{Q5} hold, which guarantees a partition $R_1\cup R_2=V_2\cup\ldots\cup V_{t-1}$ satisfying all desired properties. This completes the proof.
\end{proof}

\subsection{Finding vertex-disjoint paths}

We pick a partition $R_1\cup R_2=V_2\cup\ldots\cup V_{t-1}$ as in Claim \ref{claim:preparation-long-paths}, and let $U_i\coloneqq V_i\setminus V(S_{res})$ for each $2\leq i\leq t-1$. Also, denote $U_1\coloneqq V_1$ %\red{do you really need to label $V_1$ as $U_1$? two lines below you just write $V_1$. Also, why didn't you label $V_t$ as a $U_2$ for example?}
for simplicity in the iterative process defined below. By reordering if necessary, without loss of generality, we may assume that $|U_1|\geq |U_2|\geq \ldots \geq |U_{t-1}|,$ and let $n_1\geq n_2\geq \ldots \geq n_{t-1}$ be the corresponding sizes of the $|U_i|$s. We wish to find vertex-disjoint paths covering all the vertices in $\bigcup_{i=1}^{t-1} U_i\cup V_t$, where each path has one endpoint in $U_1=X$ and the other in $V_t=Y$. Note that if all the $n_i$s were the same, then we could simply find perfect matchings between each pair $U_i$ and $U_{i+1}$. Since the $n_i$s are not the same, we will first use Lemma \ref{lem:small-matching} to find vertex-disjoint matchings $M_i$s, where $M_i$ is a matching of size $n_i-n_{i+1}$ between $U_i$ and $V_t$. Then, noticing that the remainder of each two consecutive parts have the same size, we will apply Lemma \ref{perfect-matching-between-random-subsets} to find a perfect matching $N_i$ between the remainder of each two consecutive parts. Now, concatenating all the matchings $M_i$s and $N_i$s will give the desired vertex-disjoint paths (see Figure \ref{fig:matchings} for illustration). 

Initially, let $V_t'\coloneqq V_t$. First, we find a matching $M_1$ of size $n_1-n_2$ between $U_1\cap V_{1,1}$ and $V_t'\cap V_{t,1}$. Recall that we have $n_1-n_2\leq \frac{2Ck}{\log n}$ by \ref{Q1}. Also, recall that by \ref{Q5}, 
\[
G[V_{1,1},V_{t,1}]\text{ is a }\left(|V_{1,1}|+|V_{t,1}|,(1\pm5\gamma)\frac{2dk}{5n},\frac{18\lambda k}{n}\right)\text{-bipartite expander.}
\]
Thus, by Lemma \ref{lem:small-matching}, there exists a matching $M_1$ of size $n_1-n_{2}$ in $G$ between $U_1\cap V_{1,1}$ and $V_t'\cap V_{t,1}$. Let $U_1'\coloneqq U_1\setminus V(M_1)$ and $V_t'\coloneqq V_t'\setminus V(M_1)$. Now, we have that $|U_1'|=|U_2|=n_2$, which is necessary to find a perfect matching between $U_1'$ and $U_2$. Also, $|V_t'|=n_2$.

Next, we find a matching $M_2$ of size $n_2-n_3$ between $U_2\cap V_{2,1}$ and $V_t'\cap V_{t,1}$. Recall that by \ref{Q1}, we have 
\[
|V_{2,1}\setminus U_2|+(n_2-n_3)=|V_{t,1}\setminus V_t'|+(n_2-n_3)=n_1-n_3\leq \frac{2Ck}{\log n}
\]
Also, recall that by \ref{Q5}, 
\[
G[V_{2,1},V_{t,1}]\text{ is a }\left(|V_{2,1}|+|V_{t,1}|,(1\pm5\gamma)\frac{2dk}{5n},\frac{18\lambda k}{n}\right)\text{-bipartite expander.}
\]
Thus, by Lemma \ref{lem:small-matching}, there exists a matching $M_2$ of size $n_2-n_{3}$ in $G$ between $U_2\cap V_{2,1}$ and $V_t'\cap V_{t,1}$. Let $U_2'\coloneqq U_2\setminus V(M_2)$ and $V_t'\coloneqq V_t'\setminus V(M_2)$. Now, we have that $|U_2'|=|U_3|=n_3$, which is necessary to find a perfect matching between $U_2'$ and $U_3$. Also, $|V_t'|=n_3$. Continuing in this fashion, we can and do find vertex-disjoint matchings $M_i$s between $U_i$ and $V_t$ for each $i\in [t-2]$. And crucially, we have that $|U_i'|=|U_{i+1}|=n_{i+1}$ for each $i\in[t-2]$, and $|U_{t-1}|=|V_{t}'|=n_{t-1}$. In the rest of proof, we let $U_{t-1}'\coloneqq U_{t-1}$ and $U_t\coloneqq V_t'$ for simplicity of the following iterative process.

Now, we find a perfect matching between $U_i'$ and $U_{i+1}$ for each $i\in[t-1]$. To do so, we first prove that the induced bipartite subgraph $G[U_i',U_{i+1}]$ is a bipartite expander. Recall that for each $i\in[t-1]$, $G[V_i,V_{i+1}]$ is a $\left(2k,(1\pm5\gamma)\frac{2dk}{n},\frac{18\lambda k}{n}\right)$-bipartite expander by \ref{Q4}. So for every vertex $v\in V_i\cup V_{i+1}$,
\[
\deg(v,U_i')\leq \deg(v,V_i)\leq (1+5\gamma)\frac{dk}{n}\leq \left(1+\frac{1}{8}\right)\frac{9dk}{10n},
\]
and 
\[
\deg(v,U_{i+1})\leq \deg(v,V_{i+1})\leq (1+5\gamma)\frac{dk}{n}\leq \left(1+\frac{1}{8}\right)\frac{9dk}{10n}.
\]
Also, recall that  by \ref{Q3}, for every vertex $v\in V_i\cup V_{i+1}$, $\deg(v,V_{i,2}),\deg(v,V_{i+1,2})=(1\pm5\gamma)\frac{4dk}{5n}$. Since $V_{i,2}\subseteq U_i'$ and $V_{i+1,2}\subseteq U_{i+1}$ for each $i\in[t-1]$, this implies that
\[
\deg(v,U_i')\geq \deg(v,V_{i,2})\geq (1-5\gamma)\frac{4dk}{5n}\geq \left(1-\frac{1}{8}\right)\frac{9dk}{10n},
\] 
and 
\[
\deg(v,U_{i+1})\geq \deg(v,V_{i+1,2})\geq (1-5\gamma)\frac{4dk}{5n}\geq \left(1-\frac{1}{8}\right)\frac{9dk}{10n},
\] 
where we used $\gamma\leq 1/400$ in the last step of both inequalities. Finally, by the Interlacing Theorem for singular values (Theorem \ref{thm:Interlacing-Theorem}), $s_2(G[U_i'\cup U_{i+1}])\leq s_2(G[V_i\cup V_{i+1}])\leq \frac{18\lambda k}{n}$. Therefore, 
\[
G[U_i', U_{i+1}]\text{ is a }\left(|U_i'|+|U_{i+1}|,\left(1\pm\frac{1}{8}\right)\frac{9dk}{5n},\frac{18\lambda k}{n}\right)\text{-bipartite expander}.
\]

Thus, since $\frac{18\lambda k}{n}\leq \frac{1}{200}\cdot\frac{9dk}{5n}$, by Lemma \ref{perfect-matching-between-random-subsets}, there exists a perfect matching $N_i$ in $G$ between $U_i'$ and $U_{i+1}$. Now, by concatenating all the matchings $M_i$s and $N_i$s together, we obtain vertex-disjoint paths $P_1,\ldots,P_k$ covering $\bigcup_{i=1}^{t-1} U_i\cup V_t$, where each path has one endpoint in $U_1=X$ and the other in $V_t=Y$.

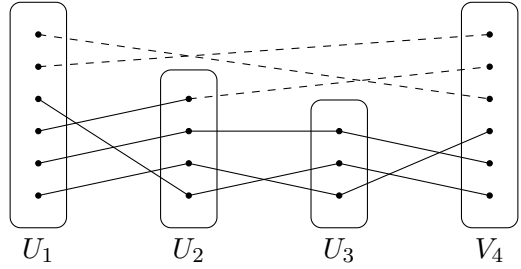
\begin{figure}
\begin{center}

    \begin{tikzpicture}
    %\centering
    %%%%%%%%%%%%    first row    %%%%%%%%%%%%%%%%
    \draw[rounded corners] (3.5, 6) rectangle (4.25, 3) {}
        node at (3.875,2.7){$U_1$};
    \draw[rounded corners] (5.5, 5.1) rectangle (6.25, 3) {}
        node at (5.875,2.7){$U_2$}; 
    \draw[rounded corners] (7.5, 4.7) rectangle (8.25, 3) {}
        node at (7.875,2.7){$U_3$};
    \draw[rounded corners] (9.5, 6) rectangle (10.25, 3) {}
        node at (9.875,2.7){$V_4$};
   
    %%%%%%%%%%%%    vertices in X    %%%%%%%%%%%%%%%%
    \foreach \i [evaluate=\i as \x using \i*3/7+3] in {1,2,...,6}{
          \filldraw(3.875,\x) circle[radius=1pt];
      }
    
     %%%%%%%%%%%%    vertices in Y    %%%%%%%%%%%%%%%%
    \foreach \i [evaluate=\i as \x using \i*3/7+3] in {1,2,...,4}{
          \filldraw(5.875,\x) circle[radius=1pt];
      }
    %%%%%%%%%%%%    vertices in R    %%%%%%%%%%%%%%%%
    
     %%%%%%%%%%%%    vertices in Y    %%%%%%%%%%%%%%%%
    \foreach \i [evaluate=\i as \x using \i*3/7+3] in {1,2,3}{
          \filldraw(7.875,\x) circle[radius=1pt];
      }

    \foreach \i [evaluate=\i as \x using \i*3/7+3] in {1,2,...,6}{
          \filldraw(9.875,\x) circle[radius=1pt];
      }

    %%%%%%%%%%%%    draw lines V1 and V2   %%%%%%%%%%%%%%%%
    \draw (3.875,4*3/7+3)--(5.875,1*3/7+3);
    \draw (3.875,1*3/7+3)--(5.875,2*3/7+3);
    \draw (3.875,2*3/7+3)--(5.875,3*3/7+3);
    \draw (3.875,3*3/7+3)--(5.875,4*3/7+3);
    %%%%%%%%%%%%    draw lines V2 and V3   %%%%%%%%%%%%%%%%
    \draw (5.875,1*3/7+3)--(7.875,2*3/7+3);
    \draw (5.875,2*3/7+3)--(7.875,1*3/7+3);
    \draw (5.875,3*3/7+3)--(7.875,3*3/7+3);
    %%%%%%%%%%%%    draw lines V3 and V4   %%%%%%%%%%%%%%%%
    \draw (7.875,1*3/7+3)--(9.875,3*3/7+3);
    \draw (7.875,2*3/7+3)--(9.875,1*3/7+3);
    \draw (7.875,3*3/7+3)--(9.875,2*3/7+3);
    %%%%%%%%%%%%    draw lines rest   %%%%%%%%%%%%%%%%
    \draw[dashed] (3.875,5*3/7+3)--(9.875,6*3/7+3);
    \draw[dashed] (3.875,6*3/7+3)--(9.875,4*3/7+3);
    \draw[dashed] (5.875,4*3/7+3)--(9.875,5*3/7+3);
\end{tikzpicture}
\caption{The figure is an example for connecting the matchings into vertex-disjoint paths when $t=4$. The dashed lines represent matchings $M_i$s, and the straight lines represent matchings $N_i$s.}\label{fig:matchings}
    
\end{center}
\end{figure}

\subsection{Closing paths into a cycle}

Let $a_i\in X$ and $b_i\in Y$ be the endpoints of the path $P_i$. Now, Lemma \ref{lemma:find_sorting_network_in_expander} implies that $S_{res}$ has a path-factor $Q_1,\dots,Q_k$ such that $Q_i$ connects $b_{i}$ and $a_{i+1}$, where $a_{t+1}\coloneqq a_1$. Therefore, $P_1Q_1P_2Q_2\ldots P_kQ_k$ is a Hamilton cycle of $G$. This completes the proof.

\end{proof}

\subsection*{Acknowledgement}
Part of this work was done when Jie Han visited the math department of UCI in spring 2023, and he would like to thank the department for the hospitality. 
We would like to thank an anonymous referee whose careful reading and valuable suggestions
helped us improve the paper considerably.
In particular, we thank the referee for suggesting the result from~\cite{hyde2023spanning} which replaces the absorption method and simplifies our proof significantly.

\bibliographystyle{abbrv}
	\bibliography{main}

\appendix
\section{Linear algebra background}\label{s: linear algebra}
%===============

In this section we collect some standard tools from linear algebra.

The following theorem provides a convenient tool for computing/bounding eigenvalues of a real symmetric matrix (see for example Theorem 2.4.1 in \cite{brouwer2011spectra}).

\begin{thm}[Courant-Fischer Minimax Theorem]\label{thm:minimax}
Let $A$ be a symmetric $n\times n$ matrix with eigenvalues $\lambda_1\geq\lambda_2\geq\dots\geq \lambda_n$. Then,
\[
\lambda_k=\max_{\dim(U)=k}\min_{\vx\in U\setminus\{\mathbf{0}\}}\frac{\vx^\tran A\vx}{\vx^\tran\vx}=\min_{\dim(U)=n-k+1}\max_{\vx\in U\setminus\{\mathbf{0}\}}\frac{\vx^\tran A\vx}{\vx^\tran\vx}.
\]
\end{thm}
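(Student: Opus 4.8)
The plan is to establish the first (max--min) identity in full and then deduce the second (min--max) identity by applying the first to $-A$. First I would fix, via the spectral theorem, an orthonormal basis $\vv_1,\dots,\vv_n$ of $\R^n$ consisting of eigenvectors of $A$, with $A\vv_i=\lambda_i\vv_i$. Expanding an arbitrary nonzero vector as $\vx=\sum_{i=1}^n c_i\vv_i$, the Rayleigh quotient takes the transparent form
\[
\frac{\vx^\tran A\vx}{\vx^\tran\vx}=\frac{\sum_{i=1}^n\lambda_i c_i^2}{\sum_{i=1}^n c_i^2},
\]
that is, a convex combination of the eigenvalues with weights proportional to $c_i^2$. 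Every subsequent estimate is just the observation that such a convex combination lies between the smallest and the largest eigenvalue whose weight is nonzero.

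For the lower bound $\max_{\dim(U)=k}\min_{\vx\in U\setminus\{\mathbf{0}\}}\frac{\vx^\tran A\vx}{\vx^\tran\vx}\ge\lambda_k$, I would test the specific subspace $U_{\ast}=\operatorname{span}(\vv_1,\dots,\vv_k)$. For $\vx\in U_\ast\setminus\{\mathbf{0}\}$ we have $c_i=0$ for $i>k$, so the Rayleigh quotient is a convex combination of $\lambda_1,\dots,\lambda_k$, each of which is $\ge\lambda_k$; hence $\min_{\vx\in U_\ast\setminus\{\mathbf{0}\}}\frac{\vx^\tran A\vx}{\vx^\tran\vx}\ge\lambda_k$, which gives the desired bound.

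For the matching upper bound, I would show that every $k$-dimensional subspace $U$ contains a nonzero vector whose Rayleigh quotient is at most $\lambda_k$. Let $W=\operatorname{span}(\vv_k,\vv_{k+1},\dots,\vv_n)$, which has dimension $n-k+1$. Since $\dim(U)+\dim(W)=n+1>n$, the intersection $U\cap W$ contains a nonzero vector $\vx$; being in $W$ it satisfies $c_i=0$ for $i<k$, so its Rayleigh quotient is a convex combination of $\lambda_k,\dots,\lambda_n$, each of which is $\le\lambda_k$. Thus $\min_{\vx\in U\setminus\{\mathbf{0}\}}\frac{\vx^\tran A\vx}{\vx^\tran\vx}\le\lambda_k$ for every such $U$, and combining the two bounds proves the max--min identity.

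Finally, the min--max identity follows by applying what we have just proved to the symmetric matrix $-A$, whose eigenvalues in nonincreasing order are $-\lambda_n\ge\cdots\ge-\lambda_1$, and relabeling $k\mapsto n-k+1$, using $\min(-f)=-\max f$. The only step that is not pure bookkeeping is the dimension count $\dim(U\cap W)\ge\dim(U)+\dim(W)-n$ forcing a nonzero common vector; I would state it explicitly and treat the convex-combination estimates as immediate.
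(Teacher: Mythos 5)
Your proof is correct and complete: the spectral-theorem expansion of the Rayleigh quotient, the test subspace $\operatorname{span}(\vv_1,\dots,\vv_k)$ for the lower bound, the dimension count $\dim(U\cap W)\ge \dim(U)+\dim(W)-n\ge 1$ with $W=\operatorname{span}(\vv_k,\dots,\vv_n)$ for the upper bound, and the passage to $-A$ with the relabeling $k\mapsto n-k+1$ for the min--max form are all sound, and together they give both identities. Note that the paper itself does not prove this statement at all; it is quoted as a standard tool with a citation to Theorem 2.4.1 in the book of Brouwer and Haemers, so there is no internal argument to compare against. Your write-up is exactly the classical textbook proof that the cited reference contains, and it would serve as a self-contained substitute for the citation; the only point worth making explicit when writing it out is the one you already flagged, namely the intersection dimension bound, plus the routine observation that both $\min(-f)=-\max f$ and $\max(-g)=-\min g$ are used when converting the max--min formula for $-A$ into the min--max formula for $A$.
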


Since the notion of eigenvalues is undefined for non-square matrices, it would be convenient for us to work with singular values which are defined for all matrices (see Definition \ref{def:singular-value}). The following theorem proved by Thompson \cite{thompson1972principal} is useful when one wants to obtain non-trivial bounds on the singular values of submatrices.

\begin{thm}[Interlacing Theorem for singular values]\label{thm:Interlacing-Theorem}
Let $A$
be an $m\times n$ matrix and let 
$$\alpha_1\geq\alpha_2\geq\ldots\geq\alpha_{\min\{m,n\}}$$
be its singular values. Let $B$ be any $p\times q$ submatrix of $A$ and let $$\beta_1\geq\beta_2\geq\ldots\geq\beta_{\min\{p,q\}}$$ be its singular values. Then
\[
\begin{aligned}
\alpha_i\geq\beta_i,\quad\quad\quad\quad\quad\quad\:\:&\text{for }i=1,2,\ldots,\min\{p,q\},\\
\beta_i\geq \alpha_{i+(m-p)+(n-q)},\quad&\text{for }i\leq\min\{p+q-m,p+q-n\}.
\end{aligned}
\]
\end{thm}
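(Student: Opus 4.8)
The plan is to reduce the statement to the Cauchy interlacing (Poincar\'e separation) theorem for eigenvalues of real symmetric matrices, which is itself a quick consequence of the Courant--Fischer minimax theorem (Theorem~\ref{thm:minimax}). The bridge between singular values and symmetric eigenvalues is the Jordan--Wielandt dilation: given an $m\times n$ matrix $A$, form the symmetric $(m+n)\times(m+n)$ matrix
\[
\tilde A=\begin{pmatrix} 0 & A\\ A^\tran & 0\end{pmatrix}.
\]
Writing an SVD $A=U\Sigma V^\tran$, one checks directly that the vectors $\binom{u_i}{\pm v_i}$ are eigenvectors of $\tilde A$ with eigenvalues $\pm\alpha_i$, and that the remaining $|m-n|$ eigenvalues are $0$; hence the eigenvalues of $\tilde A$, in nonincreasing order, are
\[
\alpha_1\ge\cdots\ge\alpha_{\min\{m,n\}}\ge 0=\cdots=0\ge -\alpha_{\min\{m,n\}}\ge\cdots\ge -\alpha_1,
\]
so that $\lambda_i(\tilde A)=\alpha_i$ for every $1\le i\le\min\{m,n\}$. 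The same applies to $B$ and its dilation $\tilde B$, giving $\lambda_i(\tilde B)=\beta_i$ for $1\le i\le\min\{p,q\}$.

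The key structural observation is that $\tilde B$ is a \emph{principal} submatrix of $\tilde A$. Indeed, if $B$ is the submatrix of $A$ on row set $R\subseteq[m]$ with $|R|=p$ and column set $C\subseteq[n]$ with $|C|=q$, then indexing $\tilde A$ by $[m]\sqcup[n]$ (rows of $A$ first, columns second) and restricting to the index set $R\sqcup C$ yields exactly $\tilde B$. Thus $\tilde B$ is a principal $(p+q)\times(p+q)$ submatrix of the $(m+n)\times(m+n)$ symmetric matrix $\tilde A$, and $(m+n)-(p+q)=(m-p)+(n-q)\eqqcolon s\ge 0$.

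Next I would invoke Cauchy interlacing — or prove it in two lines from Theorem~\ref{thm:minimax}, by restricting the relevant quadratic forms to the coordinate subspace spanned by the indices in $R\sqcup C$ — in the form $\lambda_i(\tilde A)\ge\lambda_i(\tilde B)\ge\lambda_{i+s}(\tilde A)$ for $1\le i\le p+q$. The left inequality, for $1\le i\le\min\{p,q\}$ (note $\min\{p,q\}\le p+q$ and $\min\{p,q\}\le\min\{m,n\}$), reads $\alpha_i\ge\beta_i$, which is the first assertion. For the right inequality I need both identifications $\lambda_i(\tilde B)=\beta_i$ and $\lambda_{i+s}(\tilde A)=\alpha_{i+s}$ to be legitimate: the former holds for $i\le\min\{p,q\}$, while the latter requires $i+s\le\min\{m,n\}$, which a short case check ($m\le n$ versus $n\le m$) shows is equivalent to $i\le\min\{p+q-m,\,p+q-n\}$. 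Since one also checks $\min\{p+q-m,p+q-n\}\le\min\{p,q\}$, on this index window both identifications are valid and the right inequality becomes $\beta_i\ge\alpha_{i+s}=\alpha_{i+(m-p)+(n-q)}$, the second assertion.

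The argument is essentially bookkeeping, so I do not expect a genuine obstacle; the only points requiring care are (i) determining the spectrum of $\tilde A$ correctly when $m\neq n$ and when $A$ has zero singular values — in particular that the zero eigenvalues of $\tilde A$ sit between the $\alpha_i$ and the $-\alpha_i$, which is precisely what makes $\lambda_i(\tilde A)=\alpha_i$ for $i\le\min\{m,n\}$ — and (ii) checking that the window $i\le\min\{p+q-m,p+q-n\}$ is exactly the range on which $\alpha_{i+s}$ is a bona fide singular value, so the conclusion is neither vacuous nor malformed. I would also decide whether to cite Cauchy interlacing outright or to include its short derivation from Courant--Fischer; since Theorem~\ref{thm:minimax} is already available, including the derivation keeps the appendix self-contained.
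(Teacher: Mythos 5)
Your argument is correct: the Jordan--Wielandt dilation $\tilde A=\begin{pmatrix}0&A\\A^\tran&0\end{pmatrix}$ does have eigenvalues $\pm\alpha_i$ together with $|m-n|$ zeros, so $\lambda_i(\tilde A)=\alpha_i$ for $i\le\min\{m,n\}$ (including zero singular values); $\tilde B$ is indeed the principal submatrix of $\tilde A$ on the index set $R\sqcup C$; and Cauchy interlacing, $\lambda_i(\tilde A)\ge\lambda_i(\tilde B)\ge\lambda_{i+s}(\tilde A)$ with $s=(m-p)+(n-q)$, together with your index bookkeeping ($\min\{p+q-m,p+q-n\}\le\min\{p,q\}$ and $i+s\le\min\{m,n\}$ exactly on that window) yields both families of inequalities. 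The comparison with the paper is simple: the paper gives no proof at all, but cites Thompson \cite{thompson1972principal}, whose result is obtained by other means and is part of a more refined family of singular-value inequalities for submatrices. Your route is the standard self-contained one, and it fits the paper's appendix particularly well since the Courant--Fischer theorem (Theorem~\ref{thm:minimax}) is already stated there, so deriving Cauchy interlacing by restricting the Rayleigh quotient to the coordinate subspace $\R^{R\sqcup C}$ keeps everything in-house; what the citation to Thompson buys instead is brevity and access to sharper interlacing-type bounds that the paper does not actually need, since only the two displayed inequalities are used (in fact only the case of principal submatrices and the bound $s_2$ of a submatrix by $s_2$ of the matrix).
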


One of the most commonly used tools in linear algebra is the singular value decomposition. We need a slightly stronger version of it, which almost immediately follows from the standard proof:

\begin{thm}[Singular value decomposition]
\label{thm:SVD}
  Let $M$ be an $m\times n$ matrix with rank $r$. Let $s_1\geq s_2\geq \dots\geq s_r$ be all the positive singular values of $M$. Let $\vu_1\in \R^m$ and $\vv_1\in \R^n$ be unit vectors such that $M\vv_1=s_1\vu_1$. Then we can find an orthonormal bases $\{\vu_1,\dots,\vu_m\}$ of $\R^m$ and $\{\vv_1,\dots,\vv_n\}$ of $\R^n$ with $\vu_1$ and $\vv_1$ as above, and such that
  \[M=\sum_{j=1}^rs_j\vu_j\vv_j^\tran.\] 
In particular, this equality implies that $M\vv_j=s_j\vu_j$ for $j=1,\ldots,r$ and $M\vv_j=\mathbf{0}$ for $j>r$.
\end{thm}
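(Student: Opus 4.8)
\emph{Proof proposal.} The plan is to deduce this refinement of the singular value decomposition from the ordinary one by a single deflation step, the point being that the \emph{prescribed} pair $(\vu_1,\vv_1)$ automatically behaves like a genuine leading singular pair. We may assume $r\ge 1$, since otherwise $M=\mathbf{0}$ and there is nothing to prove. \emph{First, I would show that $M^\tran\vu_1=s_1\vv_1$.} Recall that $s_1=s_1(M)=\norm{M}$ equals the operator norm of $M$. Since $\norm{M\vv_1}=s_1\norm{\vu_1}=s_1=\norm{M}$ and $\norm{\vv_1}=1$, the vector $\vv_1$ maximizes $\vx\mapsto\norm{M\vx}$ over unit vectors, equivalently maximizes the Rayleigh quotient of the symmetric positive semidefinite matrix $M^\tran M$. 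Diagonalizing $M^\tran M$ in an orthonormal eigenbasis $\{\ve_i\}$ with eigenvalues $s_1^2\ge\mu_2\ge\cdots$ and writing $\vv_1=\sum_i c_i\ve_i$, the identity $s_1^2=\vv_1^\tran M^\tran M\vv_1=\sum_i\mu_i c_i^2$ forces $c_i=0$ whenever $\mu_i<s_1^2$; hence $M^\tran M\vv_1=s_1^2\vv_1$, and therefore $M^\tran\vu_1=s_1^{-1}M^\tran(M\vv_1)=s_1^{-1}M^\tran M\vv_1=s_1\vv_1$.

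\emph{Next, I would exhibit the block structure.} Put $V_1:=\vv_1^\perp\subseteq\R^n$ and $U_1:=\vu_1^\perp\subseteq\R^m$. For $\vx\in V_1$ one has $\langle M\vx,\vu_1\rangle=\langle\vx,M^\tran\vu_1\rangle=s_1\langle\vx,\vv_1\rangle=0$, so $M(V_1)\subseteq U_1$, and symmetrically $M^\tran(U_1)\subseteq V_1$. Together with $M\vv_1=s_1\vu_1\in\mathrm{span}(\vu_1)$, this shows that with respect to the orthogonal direct sums $\R^n=\mathrm{span}(\vv_1)\oplus V_1$ and $\R^m=\mathrm{span}(\vu_1)\oplus U_1$ the matrix $M$ is block diagonal: it is multiplication by $s_1$ on the first summand and some operator $M'\colon V_1\to U_1$ on the second. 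Consequently $M^\tran M$ is block diagonal with blocks $s_1^2$ and $(M')^\tran M'$, so the singular values of $M$ are those of $M'$ together with one additional copy of $s_1$; it follows that the nonzero singular values of $M'$ are exactly $s_2\ge\cdots\ge s_r$ and $\rank(M')=r-1$.

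\emph{Finally, I would reassemble.} Applying the classical singular value decomposition theorem to $M'\colon V_1\to U_1$ (with respect to arbitrary orthonormal bases of these two spaces; equivalently, one may induct on $\min(m,n)$ with the trivial base case), we obtain orthonormal bases $\{\vv_2,\dots,\vv_n\}$ of $V_1$ and $\{\vu_2,\dots,\vu_m\}$ of $U_1$ with $M'\vv_j=s_j\vu_j$ for $2\le j\le r$ and $M'\vv_j=\mathbf{0}$ for $j>r$. Since $\vv_1\perp V_1$ and $\vu_1\perp U_1$, the augmented lists $\{\vv_1,\dots,\vv_n\}$ and $\{\vu_1,\dots,\vu_m\}$ are orthonormal bases of $\R^n$ and $\R^m$. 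For $j\ge 2$ we have $M\vv_j=M'\vv_j$ because $\vv_j\in V_1$, while $M\vv_1=s_1\vu_1$ by hypothesis; thus $M$ and $\sum_{j=1}^r s_j\vu_j\vv_j^\tran$ agree on the orthonormal basis $\{\vv_j\}$ of $\R^n$, hence are equal, and the stated consequences $M\vv_j=s_j\vu_j$ ($j\le r$) and $M\vv_j=\mathbf{0}$ ($j>r$) are immediate. The only step that is not purely formal is the first one: one must notice that merely assuming $M\vv_1=s_1\vu_1$ with $\vu_1,\vv_1$ unit vectors already pins $\vv_1$ down as a top eigenvector of $M^\tran M$, which is precisely what makes the identity $M^\tran\vu_1=s_1\vv_1$ — and hence the clean deflation in the second step — available; everything afterwards is the classical argument.
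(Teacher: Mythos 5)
Your proof is correct and is essentially the argument the paper has in mind: the paper gives no proof of Theorem \ref{thm:SVD}, asserting only that the statement ``almost immediately follows from the standard proof'' of the SVD, and your deflation argument is exactly that standard proof with its one genuinely new ingredient supplied, namely the observation that $M\vv_1=s_1\vu_1$ with $s_1=\norm{M}$ forces $M^\tran M\vv_1=s_1^2\vv_1$ and hence $M^\tran\vu_1=s_1\vv_1$, so the prescribed pair $(\vu_1,\vv_1)$ can play the role of the maximizing pair chosen in the usual inductive construction. I see no gaps.
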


We will also make use of the following simple corollary of the above theorem, which proof is included for completion.

\begin{lem}[Best low-rank approximation]\label{lem:low-rank-approximation}
Let $A$ be an $m \times n$
matrix. Then
\[
s_2(A)=\min_B\:\norm{A-B},
\]
where the minimum is over all rank-one $m\times n$ matrices $B$, and
$\|{\mkern 2mu\cdot\mkern 2mu}\|$ denotes the operator norm.

Moreover, the minimum is attained by $B = s_1(A)\vu_1\vv^\tran_1$, where 
$\vv_1\in \R^n$ and $\vu_1\in \R^m$ are any unit vectors such that $A\vv_1=s_1(A)\vu_1$.
\end{lem}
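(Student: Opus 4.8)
The plan is to prove the identity via two one-sided bounds, the first of which also exhibits the minimizer claimed in the ``moreover'' clause; throughout I would use the strengthened singular value decomposition from Theorem~\ref{thm:SVD}.

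\textbf{Upper bound.} Starting from unit vectors $\vu_1,\vv_1$ with $A\vv_1=s_1(A)\vu_1$, Theorem~\ref{thm:SVD} extends them to orthonormal bases $\{\vu_1,\dots,\vu_m\}$ of $\R^m$ and $\{\vv_1,\dots,\vv_n\}$ of $\R^n$ with $A=\sum_{j=1}^r s_j\vu_j\vv_j^\tran$, where $r=\rank(A)$ and $s_1\ge\cdots\ge s_r>0$. I would take $B:=s_1(A)\vu_1\vv_1^\tran$, which has rank one, so that $A-B=\sum_{j=2}^r s_j\vu_j\vv_j^\tran$; by orthonormality $(A-B)^\tran(A-B)=\sum_{j=2}^r s_j^2\,\vv_j\vv_j^\tran$, whose largest eigenvalue is $s_2^2$, hence $\norm{A-B}=s_2(A)$. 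This yields $\min_B\norm{A-B}\le s_2(A)$ together with the explicit minimizer.

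\textbf{Lower bound.} Fix an arbitrary rank-one matrix $B$, so $\dim\ker B\ge n-1$. If $\rank(A)\le 1$ then $s_2(A)=0$ and nothing is to be shown, so assume $\rank(A)\ge 2$; then $V:=\mathrm{span}(\vv_1,\vv_2)$ is two-dimensional and $V\cap\ker B\ne\{\mathbf{0}\}$, so I can pick a unit vector $\vx=c_1\vv_1+c_2\vv_2\in\ker B$ with $c_1^2+c_2^2=1$. Then $\norm{A-B}\ge\|(A-B)\vx\|_2=\|A\vx\|_2$, and since $A\vx=c_1 s_1\vu_1+c_2 s_2\vu_2$ with $\vu_1\perp\vu_2$, we get $\|A\vx\|_2^2=c_1^2 s_1^2+c_2^2 s_2^2\ge(c_1^2+c_2^2)s_2^2=s_2(A)^2$ using $s_1\ge s_2$. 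Hence $\norm{A-B}\ge s_2(A)$ for every rank-one $B$, which finishes the proof.

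This is the classical Eckart--Young--Mirsky computation specialized to the operator norm and rank-one perturbations, so I do not expect a genuine obstacle. The only delicate point is that the claimed minimizer must be valid for \emph{any} admissible pair of leading singular vectors $\vu_1,\vv_1$; this is exactly why the strengthened SVD of Theorem~\ref{thm:SVD}, rather than its bare existence form, needs to be invoked.
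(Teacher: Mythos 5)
Your proposal is correct and follows essentially the same route as the paper: the strengthened SVD of Theorem~\ref{thm:SVD} gives the rank-one minimizer $s_1(A)\vu_1\vv_1^\tran$ with $\norm{A-B}=s_2(A)$, and the lower bound is obtained by finding a unit vector in $\mathrm{span}(\vv_1,\vv_2)$ killed by the rank-one perturbation (the paper writes $B=\vx\vy^\tran$ and chooses $\vw\perp\vy$, which is the same dimension-counting step you phrase via $\ker B$). Your explicit treatment of the case $\rank(A)\le 1$ is a minor extra care the paper omits, but otherwise the arguments coincide.
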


\begin{proof}
Let $A$ be an $m \times n$ 
matrix with rank $r$. Let $s_1\geq s_2\geq\ldots\geq s_r$ be all positive singular values of $A$, and let $\vv_1\in \R^n$ and $\vu_1\in \R^m$ be unit vectors such that $A\vv_1=s_1\vu_1$. By Theorem \ref{thm:SVD}, there exist orthonormal bases $\{\vv_1,\dots,\vv_n\}$ of $\R^n$ and $\{\vu_1,\dots,\vu_m\}$ of $\R^m$, such that \[A=\sum_{j=1}^rs_j\vu_j\vv_j^\tran.
\]

First, note that $B=s_1\vu_1\vv_1^\tran$ is a rank-one matrix that satisfies 
\[
\norm{A-B}
=\norm[2]{\sum_{j=2}^r s_j\vu_j\vv_j^\tran}
=s_2.
\]
Therefore, to finish the proof, it suffices to show $s_2\leq \|A-B\|$ for every rank-one $m\times n$ matrix $B$. We can express such a matrix as $B=\mathbf{x}\mathbf{y}^\tran$ for some nonzero vectors $\mathbf{x} \in \mathbb{R}^m$ and $\mathbf{y} \in \R^n$. Next, we can find a nontrivial linear combination $\mathbf{w}=a\vv_1+b\vv_2$ such that $\langle \mathbf{y},\mathbf{w} \rangle = \mathbf{0}$; this implies $B\mathbf{w}=\mathbf{x}(\mathbf{y}^\tran\mathbf{w})=0$. Without loss of generality, we can scale $\mathbf{w}$ so that $\|\mathbf{w}\|=1$, or equivalently, $a^2+b^2=1$. Therefore, 
\[
\norm{A-B}^2\geq \norm{(A-B)\mathbf{w}}_2^2=\norm{A\mathbf{w}}_2^2=a^2s_1^2+b^2s_2^2\geq s_2^2.
\]
This completes the proof.
\end{proof}

Finally, we state the chain rule for singular values, which is used in the proof of Corollary~\ref{cor: singular values normalized}.

\begin{lem}[Chain rule for singular values] \label{lem: chain rule}
    Let $A,B,C$ be $n \times n$ matrices. Then 
    \[
    s_k(ABC) \le \norm{A}  \norm{B}s_k(C)
    \quad \text{for all } k \in [n].
    \]
\end{lem}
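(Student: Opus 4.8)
The plan is to prove the chain rule for singular values by reducing it to two known facts: that $s_k(\cdot)$ is submultiplicative with respect to multiplication by a matrix on one side in the operator-norm sense, and that $s_k$ is unchanged (or at least controlled) under the transformations we need. Concretely, I would establish the one-sided bound $s_k(MX) \le \norm{M} \, s_k(X)$ for arbitrary $n \times n$ matrices $M, X$, and the analogous $s_k(XM) \le s_k(X) \, \norm{M}$, and then chain them: $s_k(ABC) \le \norm{A} \, s_k(BC) \le \norm{A} \, \norm{B} \, s_k(C)$.

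The cleanest route to the one-sided bound is through the variational (minimax) characterization of singular values. Recall that $s_k(X) = s_k$ is the $k$-th largest eigenvalue of $(X^\tran X)^{1/2}$, equivalently the $k$-th largest singular value, and it admits the Courant–Fischer type description
\[
s_k(X) = \min_{\dim(U) = n-k+1} \ \max_{\vx \in U \setminus \{\mathbf{0}\}} \frac{\norm{X\vx}_2}{\norm{\vx}_2}.
\]
(This follows from Theorem~\ref{thm:minimax} applied to the symmetric matrix $(X^\tran X)^{1/2}$, since $\vx^\tran X^\tran X \vx = \norm{X\vx}_2^2$.) Given this, for any subspace $U$ of dimension $n-k+1$ and any $\vx \in U \setminus \{\mathbf 0\}$ we have $\norm{MX\vx}_2 \le \norm{M} \, \norm{X\vx}_2$, so the max over $U$ of $\norm{MX\vx}_2/\norm{\vx}_2$ is at most $\norm{M}$ times the max of $\norm{X\vx}_2/\norm{\vx}_2$; taking the minimum over all such $U$ gives $s_k(MX) \le \norm{M} \, s_k(X)$. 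For the right-hand multiplication, I would simply transpose: $s_k(XM) = s_k((XM)^\tran) = s_k(M^\tran X^\tran) \le \norm{M^\tran} \, s_k(X^\tran) = \norm{M} \, s_k(X)$, using that singular values are invariant under transposition and $\norm{M^\tran} = \norm{M}$.

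Assembling the pieces: apply the right-multiplication bound with $X = AB$ and $M = C$ to get $s_k(ABC) \le s_k(AB) \, \norm{C}$, then the left-multiplication bound with $M = A$, $X = B$ to get $s_k(AB) \le \norm{A} \, s_k(B)$, and combine. Alternatively one could do $s_k(ABC) \le \norm{A} \, s_k(BC) \le \norm{A}\,\norm{B}\,s_k(C)$ in two left-multiplication steps after noting $s_k(BC) = s_k((BC)^\tran) \le \norm{C}\, s_k(B)$ — either bookkeeping works. I do not anticipate a genuine obstacle here; the only mild subtlety is making sure the minimax statement is invoked in the correct ($n-k+1$-dimensional) form so that the inequality goes the right direction, and confirming the transposition invariance $s_k(X) = s_k(X^\tran)$, which is immediate from Definition~\ref{def:singular-value} since $X X^\tran$ and $X^\tran X$ have the same nonzero eigenvalues (and the same number of zero eigenvalues when $X$ is square).
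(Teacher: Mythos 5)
Your proposal is correct and follows essentially the same route as the paper: the minimax characterization of $s_k$ gives the one-sided bound $s_k(MX) \le \norm{M}\, s_k(X)$, the other side follows by transposition invariance of singular values, and the two are chained. The only cosmetic difference is that you spell out the reduction of the singular-value minimax to Theorem~\ref{thm:minimax} (via $X^\tran X$) which the paper leaves implicit.
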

    
\begin{proof}
    First assume that $C=I$. By the Minimax Theorem~\ref{thm:minimax}, we have 
    \[
    s_k(AB) = \min_{\dim(U)=n-k+1}\max_{\vx \in S(U)} \norm{AB\vx}_2,
    \]
    where $S(U)$ denotes the set of all unit vectors in $U$. Since $\norm{AB\vx}_2 \le \norm{A} \norm{B\vx}_2$, it follows that $s_k(AB) \le \norm{A} s_k(B)$.
    This argument also yields $s_k(BC) \le s_k(B) \norm{C}$ once we notice that $s_k(BC)=s_k(C^\tran B^\tran)$. Combining these two bounds, we complete the proof.
\end{proof}

\end{document}